\newtheorem{thm}{Theorem}[section]
\newtheorem{cor}[thm]{Corollary}
\newtheorem{prop}[thm]{Proposition}
\newtheorem{lem}[thm]{Lemma}
\theoremstyle{definition}
\newtheorem{defn}[thm]{Definition}
\newtheorem{exas}[thm]{Examples}
\newtheorem{rem}[thm]{Remark}
\DeclareMathOperator{\Gr}{Gr}
\DeclareMathOperator{\gr}{gr}
\DeclareMathOperator{\QGr}{QGr}
\DeclareMathOperator{\qgr}{qgr}
\DeclareMathOperator{\proj}{proj}
\DeclareMathOperator{\Tors}{Tors}
\let\phi\varphi
\begin{document}
	\title{Automorphisms of Leavitt path algebras:  Zhang twist and irreducible representations}
	\maketitle
	\begin{center}
		Tran Giang Nam\footnote{Institute of Mathematics, VAST, 18 Hoang Quoc Viet, Cau Giay, Hanoi, Vietnam. E-mail address: \texttt{tgnam@math.ac.vn}},
		Ashish K. Srivastava\footnote{Department of Mathematics and Statistics, Saint  Louis University,  Saint Louis, MO-63103, USA. E-mail address: \texttt{ashish.srivastava@slu.edu}} and Nguyen Thi Vien\footnote{Institute of Mathematics, VAST, 18 Hoang Quoc Viet, Cau Giay, Hanoi, Vietnam. E-mail address: \texttt{nguyenthivien2000@gmail.com}
			
			\ \ {\bf Acknowledgements}: %The first author is partly supported by JSPS KAKENHI Grant Number 18K03219. The second author is supported by the Vietnam National Foundation for Science and Technology Development (NAFOSTED) under Grant 101.04-2020.01. 
			The authors take an opportunity to express their deep gratitude to the anonymous referee for extremely careful reading, highly professional working with our manuscript, and valuable suggestions. The first author was supported by the Vietnam Institute for Advanced Study in Mathematics (VIASM) and by  the Vietnam Academy
of Science and Technology under grant CTTH00.01/24-25.		}
	\medskip
	
	Dedicated to Prof. Ngo Viet Trung on the occasion of his 70th birthday	
	\end{center}
	
	\begin{abstract} In this article, we construct (graded) automorphisms fixing all vertices of Leavitt path algebras of arbitrary graphs in terms of general linear groups over corners of these algebras. As an application, we study Zhang twist of Leavitt path algebras and describe new classes of irreducible representations of Leavitt path algebras of rose graphs $R_n$ with $n$ petals.  
		\medskip
	
\textbf{Mathematics Subject Classifications 2020}: 16D60, 16D70, 16S88.
	
\textbf{Key words}: Automorphism, Leavitt path algebra, simple module.
\end{abstract}
\medskip

\section{Introduction}
\noindent The study of automorphisms of an algebra has been an important area of research as it describes the symmetry of underlying algebraic structure. But, determining the full automorphism group of a noncommutative algebra is, in general, an extremely difficult problem with very little progress till date. In 1968, Dixmier \cite{Dixmier} described the group of automorphisms of the first Weyl algebra. For higher Weyl algebras, to find the group of automorphisms is a long standing open problem. In \cite{Bavula}, Bavula described the group of automorphisms for the Jacobson algebra $\mathbb A_n=K\langle x, y \rangle/(xy-1)$ as a semidirect product of the multiplicative group $K^*$ of the field $K$ with the general linear finitary group $GL_{\infty}(K)$ using some deep arguments. The same result was later obtained by Alahmedi, Alsulami, Jain and Zelmanov in a remarkable work \cite{aajz} where they approach this problem from another perspective noting that the Jacobson algebra $\mathbb A_n$ is isomorphic to the Leavitt path algebra of Toeplitz graph and then they describe the group of automorphisms of this Leavitt path algebra. 

Leavitt path algebras were introduced independently by Abrams and Aranda Pino in \cite{ap:tlpaoag05} and Ara, Moreno and Pardo in \cite{amp:nktfga}. These are certain quotients of path algebras where the relations are inspired from Cuntz-Krieger relations for graph $C^*$-algebras (see \cite{kpr:ckaodg, r:ga}). For a graph $E$ that has only one vertex and $n$ loops, Leavitt path algebra turns out to be the algebra of type $(1, n)$ proposed by Leavitt as an example of a (universal) ring without invariant basis number (see \cite{leav:tmtoar}).  Leavitt path algebras have deep connections with symbolic dynamics and the theory of graph $C^*$-algebras. For example, the notion of flow equivalence of shifts of finite type in symbolic dynamics is related to Morita theory and the Grothendieck group in the theory of Leavitt path algebras, and ring isomorphism (or Morita equivalence) between two Leavitt path algebras over the field of the complex numbers induces, for some graphs, isomorphism (or Morita equivalence) of the respective graph $C^*$-algebras. As remarked by Chen in \cite{c:irolpa}, Leavitt path algebras capture the homological properties of both path algebras and their Koszul dual and hence they form an important class of noncommutative algebras. Moreover, by Smith's interesting result (\cite[Theorem 1.3]{Smith2012}), the Leavitt path algebra construction arises naturally in the context of noncommutative algebraic geometry. We refer the reader to \cite{a:lpatfd, AAS:LPA} for a detailed history and overview of these algebras. 

Unfortunately, there are not many constructions known yet for automorphisms of Leavitt path algebras. In \cite{ajs:vaoga, jss:tpeoga}, motivated by Cuntz's idea \cite{cuntz:aocaCa},
Szyma\'{n}ski et al. gave a method to construct  automorphisms  of Leavitt path algebras $L_K(E)$ of finite graphs $E$ without sinks or sources in which every cycle has an exit over integral domains $K$ of characteristic $0$. In \cite[Section 2]{kn:ataanirolpa}, Kuroda and the first author  gave construction of  automorphisms fixing all vertices of Leavitt path algebras $L_K(E)$ of arbitrary graphs $E$ over an arbitrary field $K$, and gave construction of Anick type automorphisms of Leavitt path algebras. Anick automorphisms have an interesting history. For a free associative algebra $F<x, y, z>$ over a field $F$ of characteristic zero, the question about the existence of a wild automorphism was open for a long time. Anick provided a candidate for a wild automorphism in the case of free associative algebra on three generators. In \cite{Umirbaev}, Umirbaev proved that the Anick automorphism $\delta=(x+z(xz-zy), y+(xz-zy)z, z)$ of the algebra $F\langle x, y, z\rangle$ over a field $F$ of characteristic zero is wild. In this paper, based on Kuroda and the first author's work \cite[Section 2]{kn:ataanirolpa} and Cuntz's beautiful paper \cite{cuntz:aocaCa}, we give a construction for graded automorphisms of Leavitt path algebras. We describe (graded) automorphisms fixing all vertices of Leavitt path algebras of arbitrary graphs in terms of general linear groups over corners of these algebras (Theorem~\ref{Iso} and Corollary \ref{gr-Iso}). Consequently, this yields a complete description of all (graded) automorphisms of the Leavitt path algebra $L_K(R_n)$ of the rose graph $R_n$ with $n$ petals in term of general linear group of degree $n$ over $L_K(R_n)$ (Propositions \ref{IsoofLA} and \ref{gr-IsoofLA}). Moreover, we show that the group of all graded automorphisms of $L_K(R_n)$ contains some special subgroups, for example, the general linear group of degree $n$ over $K$ (Corollaries \ref{gl-IsoofLA} and \ref{Anick-IsoofLA}). We also provide a complete description of all (graded) automorphisms of $L_K(R_n)$ via the group of units $U(L_K(R_n))$ of $L_K(R_n)$ (Corollary \ref{Cuntz-IsoofLA}).

As the first application of these constructions for graded automorphisms, we study twists of Leavitt path algebras. One of the most frequently used tools to construct new examples of algebras and coalgebras is twisting the multiplicative structure of original algebra. Classic examples of algebras constructed by twisting multiplicative structure include skew polynomial rings and skew group rings. The twist of Leavitt path algebras that we study here is a twist in the sense of Artin, Tate and Van den Bergh. A notion of twist of a graded algebra $A$ was introduced by Artin, Tate, and Van den Bergh in \cite{atv:moraod3} as a deformation of the original graded product of $A$ with the help of a graded automorphism of $A$. Let $\sigma$ be an automorphism of the graded algebra $A=\oplus A_n$. Define a new multiplication $\star$ on the underlying graded $K$-module $\oplus A_n$ by $a \star b=a\sigma^n(b)$ where $a$ and $b$ are homogeneous elements in $A=\oplus A_n$ and $deg(a)=n$. The new graded algebra with the same underlying graded $K$-module $\oplus A_n$ and the new graded product $\star$ is called the twist of $A$ and is denoted as $A^{\sigma}$. 

This notion of twist of a graded algebra was later generalized by Zhang in \cite{zhang:tgaaeogc}, where he introduced the concept of twisting of graded product with the help of a twisting system. Let $\tau=\{\tau_n\mid n \in \mathbb Z\}$ be a set of graded $K$-linear automorphisms of $A=\oplus A_n$. Then $\tau$ is called a twisting system if $\tau_n(y\tau_m(z))=\tau_n(y)\tau_{n+m}(z)$ for all $n, m, l\in \mathbb Z$ and $y\in A_m$, $z\in A_l$. For example, if $\sigma$ is a graded algebra automorphism of $A$, then $\tau=\{\sigma^n\mid n\in \mathbb Z\}$ is a twisting system. Thus, the twist of a graded algebra in the sense of Artin-Tate-Van den Bergh can be viewed as a special case of the twist introduced by Zhang. Such a twist of a graded algebra is now known as Zhang twist. 

Zhang twist of a graded algebra has played a vital role in the interaction of noncommutative algebra with noncommutative projective geometry. The fundamental idea behind the noncommutative projective scheme defined by Artin and Zhang \cite{AZ} is to give up on the actual geometric space and instead generalize only the category of coherent sheaves to the noncommutative case. In the case of commutative algebras, Serre's theorem established that studying the category of quasi-coherent sheaves on a projective variety is essentially the same as studying the quotient category of graded modules. The definition of noncommutative projective space is motivated by Serre's result. 

Let $A$ be a right noetherian graded algebra. We denote by $\Gr-A$ the category of graded right $A$-modules with morphisms being graded homomorphisms of degree zero. An element $x$ of a graded right $A$-module $M$ is called torsion if $xA_{\ge s}=0$ for some $s$. The torsion elements in $M$ form a graded $A$-submodule which is called the torsion submodule of $M$. The torsion modules form a subcategory for which we use the notation $\Tors(A):=$ the full subcategory of $\Gr-A$ of torsion modules. We denote $\QGr-A:=$ the quotient category $\Gr-A/\Tors(A)$. We will use the lower case notations $\gr-A$, $\qgr-A$ to indicate that we are working with finitely generated $A$-modules. Since $\qgr-A$ is a quotient category of $\gr-A$, it inherits two structures: the object $\mathcal A$ which is the image in $\qgr-A$ of $A_A$, and the shift operator $s$ on $\qgr-A$, which is the automorphism of the category $\qgr-A$ determined by the shift on $\gr-A$. The triple $(\qgr-A, \mathcal A, s)$ is called the noncommutative projective scheme associated to $A$, denoted as $\proj-A$. We refer the reader to \cite{AZ} for more details on noncommutative projective scheme. 

One of the main features of the study of Zhang twist of a graded algebra is that if an algebra $B$ is isomorphic to the Zhang twist of an algebra $A$, then their graded module categories $\Gr-A$ and $\Gr-B$ are equivalent. If the algebra $A$ is noetherian, then this equivalence restricts to the subcategories of finitely generated modules to give an equivalence $\gr-A \cong \gr-B$. Moreover, the subcategories of modules which are torsion (that is, finite-dimensional over $K$) also correspond, and so we have an equivalence between the quotient categories $\qgr-A$ and $\qgr-B$. As a consequence it follows that their noncommutative projective schemes $\proj-A$ and $\proj-B$ are equivalent. Since Zhang twist of a commutative graded algebra by a non-identity automorphism yields a noncommutative graded algebra, this gives us a tool to construct examples of noncommutative graded algebras whose noncommutative projective schemes are isomorphic to commutative projective schemes. It is known that many fundamental properties like Gelfand-Kirillov dimesnion and Artin-Schelter regularity are preserved under Zhang twist whereas some ring-theoretic properties such as being a prime ring or being a PI ring are not preserved under Zhang twist. 

In this paper we initiate the study of Zhang twist of Leavitt path algebras with a larger goal to develop the geometric theory of Leavitt path algebras. In Section 3, we twist the multiplicative structure of Leavitt path algebras with the help of graded automorphisms constructed in Section 2. In a rather surprising result we show that the Leavitt path algebra $L_K(E)$ of an arbitrary graph $E$ may be embedded into the Zhang twist $L_K(E)^{\phi_P}$ by any graded automorphism $\phi_P$ introduced in Corollary \ref{gr-Iso} (Proposition \ref{ZhangTw-SubalLPA}), and the embedding is not an isomorphism in general (Examples \ref{exa-theta} (3)). Geometrically, this means that any noetherian Leavitt path algebra always embeds in another algebra with the same projective scheme. We also characterize Leavitt path algebras $L_K(R_n)$ of the rose graph $R_n$ with $n$ petals that are rigid to Zhang twist in the sense that $L_K(R_n)$ turns out to be isomorphic to its Zhang twist with respect to graded automorphisms constructed in Section 2 (Theorem \ref{ZhTwisotoLeAl} and Corollary \ref{theta-u}).   

Automorphism of an algebra helps in constructing new twisted irreducible representations. It is not difficult to see that if $M$ is an irreducible representation of an algebra $A$ and $\sigma$ is an automorphism of $A$ then $M^{\sigma}$ is also an irreducible representation where $M^{\sigma}$ is the same vector space as $M$ with the module operation given as $a.m=\sigma(a)m$ for any $a\in A$. This new irreducible representation $M^{\sigma}$ of $A$ is called a twisted representation. In another application to our constructions of automorphisms, we study the irreducible representations of the Leavitt path algebra of rose graph $R_n$ with $n$ petals in the last section of this paper. 

In a seminal work \cite{c:irolpa}, Chen constructed irreducible representations of Leavitt path algebras using infinite paths. For an infinite path $p$ in $E$, Chen constructed a simple module $V_{[p]}$ for the Leavitt path algebra $L_K(E)$ of an arbitrary graph $E$ where $[p]$ is the equivalence class of infinite paths tail-equivalent to $p$. Later, in \cite{ar:fpsmolpa}, Ara and Rangaswamy characterized Leavitt path algebras which admit only finitely presented irreducible representations. In \cite{anhnam}, \'{A}nh and the first author constructed a new class of simple $L_K(E)$-modules, $S^f_c$ associated to pairs $(f, c)$ consisting of simple closed paths $c$ together with irreducible polynomials $f\in K[x]$.  We should note that Ara and Rangaswamy \cite{ar:fpsmolpa} classified all simple modules over the Leavitt path algebra of a finite graph in which every vertex is in at most one cycle. Their result induces our investigation of simple modules for Leavitt path algebras of graphs having a vertex that is in at least two cycles.
The most important case of this class is the Leavitt path algebra of a rose graph with $n\ge 2$ petals.   

For Leavitt path algebra $L_K(R_n)$ of the rose graph $R_n$ with $n$ petals, in \cite{kn:ataanirolpa} Kuroda and the first author constructed additional classes of simple $L_K(R_n)$-modules by studying the twisted modules of the simple modules $S^f_c$ under Anick type automorphisms of $L_K(R_n)$ mentioned in Corollary \ref{Anick-IsoofLA}. In Section 4, we define a new simple left $L_K(R_n)$-module $V_{[\alpha]}^P:=(V_{[\alpha]})^{\phi^{-1}_P}$ which is a twist of the simple $L_K(R_n)$-module $V_{[\alpha]}$ by the graded automorphism $\phi^{-1}_P$ mentioned in Proposition \ref{gr-IsoofLA}, where $\alpha$ is an infinite path in $R_n$ and $P\in GL_n(K)$, and classify completely these simple modules (Theorems \ref{Irrrep2-irr} and \ref{Irrrep5-irr}). Also, in Theorem \ref{Irrrep2-irr}, we show that $V^P_{[\alpha]}\cong L_K(R_n)/\bigoplus^{\infty}_{m=0}L_K(R_n)(\phi_P(\epsilon_m) - \phi_P(\epsilon_{m+1}))$ for all irrational path $\alpha = e_{i_1}\cdots e_{i_m}\cdots$, where $\epsilon_0 := v$, $\epsilon_m = e_{i_1}\cdots e_{i_m}e^*_{i_m}\cdots e^*_{i_1}$ for all $m\ge 1$, and the graded automorphism $\phi_P$ is defined in Proposition \ref{gr-IsoofLA}. Consequently, $V^P_{[\alpha]}$ is not finitely presented. Moreover, we show that there are infinitely many isomorphic classes of these simple modules (Corollaries \ref{Irrrep3-irr} and \ref{Irrrep4-irr}). For a simple closed path $c$ in $R_n$, we show in Theorem \ref{Irrrep5-irr} that the twisted module $V^P_{[c^{\infty}]}$ is finitely presented for all $P\in GL_n(K)$ and $V^P_{[c^{\infty}]}\cong L_K(R_n)/L_K(R_n)(v - \phi_P(c))$. Furthermore, we obtain that there are infinitely many isomorphic classes of these simple modules (Corollary \ref{Irrrep6-irr}). 
We conclude this paper by giving a list of some classes of pairwise non-isomorphic simple modules over $L_K(R_n)$ in Theorem \ref{Irrrep7-irr}. 

Throughout the paper, all {\it algebras} are associative over a field, not necessary with identity but having local units. {\it Modules} are considered with respect to algebras, that means, they are also at the same time vector spaces over a field. Modules are {\it unitary} in the sense that each of their elements is fixed by an appropriate idempotent. {\it Algebra homomorphisms} are defined in the standard manner. Homomorphisms between algebras with identity are algebra homomorphisms having the identity-preserving property.

\section{On graded automorphisms of Leavitt path algebras}
\noindent Cuntz \cite{cuntz:aocaCa} showed that there is a one-to-one correspondence between unitary elements of the Cuntz algebra $\mathcal{O}_n$ and endomorphisms of $\mathcal{O}_n$  via $u\longmapsto \lambda_u$ where $\lambda_u (S_i) = uS_i$, and provided criteria for these endomorphisms to be automorphisms. In \cite{chs:eoga}, motivated by Cuntz's results, Conti, Hong and Szyma\'{n}ski introduced a class of endomorphisms fixing all vertex projections $\lambda_u$ of $C^*(E)$ corresponding to unitaries in the multiplier algebra $M(C^*(E))$ which commute with all vertex projections. Then, they studied localized automorphisms of the graph algebra $C^*(E)$ of a finite graph without sink (i.e., automorphisms $\lambda_u$ corresponding to unitaries $u$ from the algebraic part of the core AF-subalgebra which
commute with the vertex projections), and gave combinatorial criteria for localized endomorphisms corresponding to permutation unitaries to be automorphisms. 

Szyma\'{n}ski et al. \cite{ajs:vaoga, jss:tpeoga} studied permutative automorphisms and  polynomial endomorphisms of graph $C^*$-algebras $C^*(E)$ and Leavitt path algebras $L_K(E)$, where $E$ is a finite graph without sinks or sources in which every cycle has an exit, and $K$ is an integral domain of characteristic $0$. Kuroda and the first author \cite[Section 2]{kn:ataanirolpa} gave a method to construct endomorphisms and automorphisms fixing all vertices of Leavitt path algebras $L_K(E)$ of arbitrary graphs $E$ over an arbitrary field $K$, by using special pairs $(P, Q)$ consisting of matrices in $M_n(L_K(E))$ which commute with all vertices in $E$, where $n$ is an arbitrary positive integer.

The first aim of this section is to completely describe endomorphisms introduced in \cite{kn:ataanirolpa}, and give criteria for these endomorphisms to be automorphisms. Before giving these constructions for automorphisms of Leavitt path algebras, we begin this section by recalling some useful notions of graph theory. 

A (\textit{directed}) \textit{graph} is a quadruplet $E = (E^0, E^1, s, r)$ which consists of two disjoint sets $E^0$ and $E^1$, called the set of \emph{vertices} and the set of \emph{edges}
respectively, together with two maps $s, r: E^1 \longrightarrow E^0$.  The vertices $s(e)$ and $r(e)$ are referred to as the \emph{source} and the \emph{range} of the edge~$e$, respectively. 
A vertex~$v$ for which $s^{-1}(v)$ is empty is called a \emph{sink}; a vertex~$v$ is \emph{regular} if $0< |s^{-1}(v)| < \infty$;  a vertex~$v$ is an \textit{infinite emitter} if $|s^{-1}(v)| = \infty$; and a vertex is \textit{singular} if it is either a sink or an infinite emitter. 

A \emph{finite path of length} $n$ in a graph $E$ is a sequence $p = e_{1} \cdots e_{n}$  of edges $e_{1}, \dots, e_{n}$ such that $r(e_{i}) = s(e_{i+1})$ for $i = 1, \dots, n-1$.  In this case, we say that the path~$p$ starts at the vertex $s(p) := s(e_{1})$ and ends at the vertex $r(p) := r(e_{n})$, we write $|p| = n$ for the length of $p$.  We consider the elements of $E^0$ to be paths of length $0$. We denote by $E^*$ the set of all finite paths in $E$.  An edge $f$ is an \textit{exit} for a path $p= e_1 \cdots e_n$ if $s(f) = s(e_i)$ but $f \neq e_i$ for some $1\le i\le n$. A finite path $p$ of positive length is called a \textit{closed path based at} $v$ if $v = s(p) = r(p)$. A \textit{cycle} is a closed path $p = e_{1} \cdots e_{n}$, and for which the vertices $s(e_1), s(e_2), \hdots, s(e_n)$ are distinct. A closed path $c$ in $E$ is called \textit{simple} if $c \neq d^n$ for any closed path $d$ and integer $n\ge 2$. We denoted by $SCP(E)$ the set of all simple closed paths in $E$.

%An \textit{infinite path} in $E$ is an infinite sequence $p = e_1 \cdots e_n\cdots$ of edges in $E$ such that $r(e_i) = s(e_{i+1})$ for all $i\geq 1$. In this case, we say that the infinite path $p$ starts at the vertex $s(p) := s(e_1)$. We denote by $E^{\infty}$ the set of all infinite paths in $E$. For $p = e_1 \cdots e_n\cdots$ and $m\ge 1$, the finite path $e_1 \cdots e_m$ is called the \textit{head} or the \textit{initial segment of length} $m$ of $p$, denoted by $h_{p}(m)$; while the Chen word $e_{m+1} e_{m+2}\cdots$ is called the \textit{tail of co-length} $m$ of $p$, denoted by $t_p(m)$. We set $h_p(0) = s(p)$ and $t_p(0) = p$. Similarly, we can define $h_m(p)$ and $t_p(m)$ for a finite path $p$ and $0\le m\le |p|$.

\begin{defn}\label{LPA}
	For an arbitrary graph $E = (E^0,E^1,s,r)$
	and any  field $K$, the \emph{Leavitt path algebra} $L_{K}(E)$ {\it of the graph}~$E$ \emph{with coefficients in}~$K$ is the $K$-algebra generated by the union of the set $E^0$ and two disjoint copies of $E^1$, say $E^1$ and $\{e^*\mid e\in E^1\}$, satisfying the following relations for all $v, w\in E^0$ and $e, f\in E^1$:
	\begin{itemize}
		\item[(1)] $v w = \delta_{v, w} w$;
		\item[(2)] $s(e) e = e = e r(e)$ and $e^*s(e)  = e^* = r(e) e^*$;
		\item[(3)] $e^* f = \delta_{e, f} r(e)$;
		\item[(4)] $v= \sum_{e\in s^{-1}(v)}ee^*$ for any regular vertex $v$;
	\end{itemize}
	where $\delta$ is the Kronecker delta.
\end{defn}
If $E^0$ is finite, then $L_K(E)$ is a unital ring having identity $1 = \sum_{v\in E^0}v$ (see, e.g. \cite[Lemma 1.6]{ap:tlpaoag05}).
It is easy to see that the mapping given by $v\longmapsto v$ for all $v\in E^0$, and $e\longmapsto e^*$, $e^*\longmapsto e$ for all $e\in E^1$, produces an involution on the algebra $L_K(E)$, and for any path $p = e_1e_2\cdots e_n$, the element $e^*_n\cdots e^*_2e^*_1$ of $L_K(E)$ is denoted by $p^*$.
It can be shown (\cite[Lemma 1.7]{ap:tlpaoag05}) that $L_K(E)$ is  spanned as a $K$-vector space by $\{pq^* \mid p, q\in E^*, r(p) = r(q)\}$. Indeed, $L_K(E)$ is a $\mathbb{Z}$-graded $K$-algebra: $L_K(E) = \oplus_{n\in \mathbb{Z}}L_K(E)_n$, where for each $n\in \mathbb{Z}$, the degree $n$ component $L_K(E)_n$ is the set $ \text{span}_K \{pq^*\mid p, q\in E^*, r(p) = r(q), |p|- |q| = n\}$.

The Leavitt path algebra $L_K(E)$ of a graph $E$ over field $K$ has the following universal property: if $\mathcal{A}$ is a $K$-algebra generated by a family of elements $\{a_v, b_e, c_{e^*}\mid v\in E^0, e\in E^1\}$ satisfying the relations analogous to (1) - (4)  in Definition~\ref{LPA}, then there exists a unique $K$-algebra homomorphism $\varphi: L_K(E)\longrightarrow \mathcal{A}$ given by $\varphi(v) = a_v$, $\varphi(e) = b_e$ and $\varphi(e^*) = c_{e^*}$.  We will refer to this property as the Universal Property of $L_K(E)$.

As usual, for any ring $R$, for any endomorphism $f\in End(R)$ and for any $A\in M_n(R)$, we denote by $f(A)$ the matrix $(f(a_{i,j}))\in M_n(R)$, and denote by $A_m$ the matrix $Af(A)\cdots f^{m-1}(A)\in M_n(R)$ for every $m\ge 1$, where $f^0:=id_R$. For any $\mathbb{Z}$-graded algebra $A$ over a field $K$, we denote by $End^{gr}(A)$ the $K$-algebra of all graded endomorphisms of  $A$, and denote by $Aut^{gr}(A)$ the group of all graded automorphisms of $A$.

We are now in a position to establish the main result of this section providing a method to construct endomorphisms and automorphisms fixing all vertices of Leavitt path algebras of arbitrary graphs over an arbitrary field in terms of general linear groups over corners of these algebras.

\begin{thm}\label{Iso}
	Let $K$ be a field, $n$ a positive integer, $E$ a graph, and $v$ and $w$ vertices in $E$ (they may be the same). Let $e_1, e_2, \ldots, e_n$ be distinct edges in $E$ with $s(e_i) = v$ and $r(e_i) = w$ for all $1\le i\le n$.  Let $P$ be an element of $GL_n(wL_K(E)w)$ with $P=(p_{i,j})$ and $P^{-1}=(p'_{i,j})$. Then the following statements hold:
	
	$(1)$ There exists a unique injective homomorphism $\phi_{P}: L_K(E)\longrightarrow L_K(E)$ of $K$-algebras satisfying 
	\begin{center}
		$\phi_{P}(u)=u,\quad \phi_{P}(e)=e\quad\text{ and }\quad \phi_{P}(e^*)=e^*$ \end{center} for all $u\in E^0$ and $e\in E^1\setminus \{ e_1,\ldots ,e_n\} $, and 
	\begin{center}
		$\phi_{P}(e_i) = \sum^n_{k=1}e_kp_{k,i}$\quad	and \quad $\phi_{P}(e^*_i) = \sum^n_{k=1}p'_{i,k}e^*_k$
	\end{center} for all $1\le i\le n$. 
	
	$(2)$ For every $Q\in GL_n(wL_K(E)w)$, $\phi_P = \phi_Q$ if and only if $P=Q$. Consequently, $\phi_P = id_{L_K(E)}$ if and only if $P$ is the identity matrix of $M_n(wL_K(E)w)$.
	
	$(3)$ $\phi_P\phi_Q = \phi_{_P\phi_P(Q)}$ for all $Q\in GL_n(wL_K(E)w)$. In particular, $\phi^m_P = \phi_{P_m}$ for all positive integer $m$. %where $P_m := P\phi_P(P)\cdots \phi^{m-1}_P(P)$.
	
	$(4)$ $\phi_P$ is an isomorphism if and only if $P^{-1} = \phi_P(Q)$ for some $Q\in GL_n(wL_K(E)w)$. In this case, $\phi_{P_m}^{-1} = \phi_{Q_m}$, $P_m = \phi_{P_m}(Q^{-1}_m)$ and $P^{-1}_m = \phi_{P_m}(Q_m)$ for all $m\ge 1$. In particular, if $\phi_{P}(P)= P$ or $\phi_{P}(P^{-1})= P^{-1}$, then $\phi_{P}$ is an isomorphism and $\phi_{P}^{m}=\phi_{P^{m}}$ for all integer $m$.
	
	If, in addition, $|s^{-1}(v)| = n$, then we have the following:
	
	$(5)$ For every $K$-algebra homomorphism $\lambda: L_K(E)\longrightarrow L_K(E)$ with $\lambda(u)=u,\ \lambda(e)=e \text{ and } \lambda(e^*)=e^*$  for all $u\in E^0$ and $e\in E^1\setminus\{ e_1,\ldots ,e_n\}$, there exists a unique matrix $P = (p_{i,j})\in GL_n(wL_K(E)w)$ such that $p_{i, j} = e^*_i\lambda(e_j)$ for all $1\le i, j\le n$ and $\lambda = \phi_P$.
	
	$(6)$ We denote by $End_{v,w}(L_K(E))$ the set of all endomorphisms $\lambda$ of $L_K(E)$ with  $\lambda(u)=u,\ \lambda(e)=e \text{ and } \lambda(e^*)=e^*$  for all $u\in E^0$ and $e\in E^1\setminus \{ e_1,\ldots ,e_n\}$. Then, the map $\Phi: (GL_n(wL_K(E)w), \star)\longrightarrow End_{v,w}(L_K(E))$, $P\longmapsto \phi_P$, is a monoid isomorphism, where the multiplication law ``$\star$" is defined by $$P\star Q = P\phi_P(Q)$$ for all $P, Q\in GL_n(wL_K(E)w)$.
\end{thm}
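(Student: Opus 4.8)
The plan is to establish the six statements more or less in the order given, using the Universal Property of $L_K(E)$ as the central tool for existence, and the relations $(1)$–$(4)$ of Definition~\ref{LPA} as the means to verify compatibility.

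First I would prove $(1)$. To produce $\phi_P$ I would invoke the Universal Property: I define a candidate family $\{a_u, b_e, c_{e^*}\}$ in $L_K(E)$ by setting $a_u = u$, by setting $b_e = e$ and $c_{e^*} = e^*$ for $e \notin \{e_1,\ldots,e_n\}$, and by setting $b_{e_i} = \sum_{k=1}^n e_k p_{k,i}$ and $c_{e_i^*} = \sum_{k=1}^n p'_{i,k} e_k^*$. The bulk of the work is checking that this family satisfies the analogues of relations $(1)$–$(4)$. Relation $(1)$ is immediate. For relation $(2)$ I would use $s(e_i)=v$, $r(e_i)=w$, and the fact that the entries $p_{k,i}$ lie in $wL_K(E)w$, so that $w p_{k,i} = p_{k,i} = p_{k,i} w$; then $v b_{e_i} = \sum_k v e_k p_{k,i} = \sum_k e_k p_{k,i} = b_{e_i}$ and $b_{e_i} w = b_{e_i}$, and similarly for $c_{e_i^*}$. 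The key computation is relation $(3)$: I must verify $c_{e_i^*} b_{e_j} = \delta_{i,j} r(e_i) = \delta_{i,j} w$. Expanding,
\[
c_{e_i^*} b_{e_j} = \sum_{k,l} p'_{i,k} e_k^* e_l p_{l,j} = \sum_{k,l} p'_{i,k}\,\delta_{k,l}\,w\,p_{l,j} = \sum_k p'_{i,k} p_{k,j} = (P^{-1}P)_{i,j} = \delta_{i,j} w,
\]
where I have used $e_k^* e_l = \delta_{k,l} w$ (this uses that the $e_i$ are distinct edges with common range $w$), and that $P^{-1}P = I_n$ in $M_n(wL_K(E)w)$ has identity $w$. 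The cross relations $c_{e_i^*} f = 0$ for $f \notin \{e_1,\ldots,e_n\}$ follow from $e_k^* f = \delta_{k,f} r(e_k)$ together with $f \ne e_k$. For relation $(4)$ at the vertex $v$ (when $v$ is regular) I would compute $\sum_{e \in s^{-1}(v)} b_e b_e^{*}$ and check it collapses to $v$: the edges outside $\{e_i\}$ contribute $\sum_{e} e e^*$ unchanged, while $\sum_{i} b_{e_i} c_{e_i^*} = \sum_{i,k,l} e_k p_{k,i} p'_{i,l} e_l^* = \sum_{k,l} e_k (PP^{-1})_{k,l} e_l^* = \sum_k e_k e_k^*$, so the total matches relation $(4)$. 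The Universal Property then yields a unique homomorphism $\phi_P$ with the stated values. For injectivity I note that $\phi_P$ is graded (it sends each generator to a homogeneous element of the same degree), and I would apply the Graded Uniqueness Theorem for Leavitt path algebras: since $\phi_P$ fixes every vertex $u \in E^0$, no nonzero vertex is sent to $0$, hence $\phi_P$ is injective.

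Statement $(2)$ is recovered by the explicit formula $p_{i,j} = e_i^* \phi_P(e_j)$, which I read off from $e_i^* \phi_P(e_j) = \sum_k e_i^* e_k p_{k,j} = p_{i,j}$; thus $\phi_P = \phi_Q$ forces $p_{i,j} = q_{i,j}$ for all $i,j$, i.e.\ $P = Q$, and the identity case follows since $\phi_{I} = \mathrm{id}$. For $(3)$ the composition law is a direct calculation: $\phi_P(\phi_Q(e_i)) = \phi_P\bigl(\sum_k e_k q_{k,i}\bigr) = \sum_k \phi_P(e_k)\phi_P(q_{k,i}) = \sum_{k,l} e_l p_{l,k}\phi_P(q_{k,i})$, whose coefficient of $e_l$ is the $(l,i)$ entry of $P\phi_P(Q)$; comparing with the defining formula for $\phi_{P\phi_P(Q)}$ gives $\phi_P\phi_Q = \phi_{P\phi_P(Q)}$, and the special case $\phi_P^m = \phi_{P_m}$ follows by induction using the definition $P_m = P\,\phi_P(P)\cdots \phi_P^{m-1}(P)$. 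Statement $(4)$ is then a formal consequence of the monoid structure: $\phi_P$ is an isomorphism if and only if it is invertible in the monoid $\mathrm{End}_{v,w}$, i.e.\ if and only if there is $Q$ with $P \star Q = I$, which by the $\star$-law means $P\phi_P(Q) = I$, that is $P^{-1} = \phi_P(Q)$; the formulas $\phi_{P_m}^{-1} = \phi_{Q_m}$ and the relations among $P_m, Q_m$ are obtained by iterating, and the sufficient conditions $\phi_P(P) = P$ or $\phi_P(P^{-1}) = P^{-1}$ give $\phi_P^m = \phi_{P^m}$ directly.

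For $(5)$ and $(6)$, which assume $|s^{-1}(v)| = n$ (so that $\{e_1,\ldots,e_n\}$ is exactly the set of edges emitted by $v$), the point is surjectivity of the construction. Given such a $\lambda$, I would define $P = (p_{i,j})$ with $p_{i,j} := e_i^* \lambda(e_j)$. I must check that $P \in GL_n(wL_K(E)w)$: each $p_{i,j}$ lies in $wL_K(E)w$ because $\lambda(e_j) = \lambda(v e_j w) = v\lambda(e_j)w$ has range $w$, and the matrix $Q := (q_{i,j})$ with $q_{i,j} = e_i^*\lambda^{-1}(e_j)$—or more robustly, the matrix built from the relation $v = \sum_i \lambda(e_i)\lambda(e_i^*)$—serves as a two-sided inverse; here the hypothesis $|s^{-1}(v)|=n$ is essential, since it forces $\lambda$ to preserve relation $(4)$ at $v$ and hence $PP^{-1} = w I_n$. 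The heart of $(5)$ is then verifying $\lambda = \phi_P$ on all generators, which reduces to checking agreement on the $e_i$ and $e_i^*$; agreement on $e_i$ is exactly the identity $\lambda(e_i) = \sum_k e_k (e_k^* \lambda(e_i)) = \sum_k e_k p_{k,i}$, valid because $v = \sum_k e_k e_k^*$ lets me expand $\lambda(e_i) = v\lambda(e_i) = \sum_k e_k e_k^* \lambda(e_i)$, and agreement on $e_i^*$ follows by applying the involution or by a dual computation. Finally $(6)$ assembles everything: $\Phi$ is well-defined and injective by $(1)$–$(2)$, surjective by $(5)$, and multiplicative by $(3)$ once the target monoid is given the $\star$-operation, so $\Phi$ is a monoid isomorphism.

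I expect the main obstacle to be the verification in $(5)$ that the matrix $P = (e_i^*\lambda(e_j))$ is genuinely invertible over the corner $wL_K(E)w$—that is, exhibiting an explicit inverse and confirming both one-sided products equal $w I_n$. This is where the hypothesis $|s^{-1}(v)| = n$ must be used crucially to ensure that $\lambda$, which fixes all vertices, necessarily respects the Cuntz–Krieger relation $(4)$ at $v$; without it one only obtains a one-sided inverse and $\lambda$ need not be captured by any $\phi_P$.
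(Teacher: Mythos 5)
Your verification of the Cuntz--Krieger relations for the family $\{a_u,b_e,c_{e^*}\}$, and your treatment of $(2)$, $(3)$, $(4)$ and $(6)$, track the paper's argument closely. The genuine gap is your injectivity claim in $(1)$: you assert that $\phi_P$ is graded because it ``sends each generator to a homogeneous element of the same degree,'' but this is false in the setting of Theorem~\ref{Iso}. The entries $p_{k,i}$ are arbitrary elements of the corner $wL_K(E)w$, not homogeneous of degree zero, so $\phi_P(e_i)=\sum_k e_kp_{k,i}$ need not be homogeneous of degree $1$; the graded situation is exactly what is carved out separately in Corollary~\ref{gr-Iso}, where one assumes $P\in GL_n(wL_K(E)_0w)$. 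Hence the Graded Uniqueness Theorem is unavailable and your one-line injectivity argument collapses. The paper instead devotes the bulk of its proof of $(1)$ to this point, via the Reduction Theorem: a nonzero element of $\ker\phi_P$ can be compressed either to a vertex (immediate contradiction) or to a nonzero Laurent polynomial $p(c)$ in a cycle $c$ without exits; the latter case forces $n=1$, and one computes inside the corner $wL_K(E)w\cong K[x,x^{-1}]$, writing $p_{1,1}=a\alpha^s$ and showing $\phi_P(c^l)=a^lc^{l(s+1)}$. That this case is genuinely delicate, and not something any soft uniqueness theorem will dispatch, is visible in the paper's own example of $L_K(R_1)$ following Corollary~\ref{gr-Iso}: taking $P=ae^{-1}=ae^*$ gives $\phi_P(e)=av$, a degree-zero element, so $\phi_P$ is not graded, and indeed $\phi_P(e-av)=0$ (this is precisely the $s=-1$ collapse $c^{l(s+1)}=v$ in the computation above). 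So injectivity hinges on a careful analysis of exactly this configuration and cannot be obtained by the route you propose.

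A secondary slip occurs in $(5)$: your first candidate inverse $q_{i,j}=e_i^*\lambda^{-1}(e_j)$ presupposes that $\lambda$ is invertible, which is not given (in $(5)$, $\lambda$ is merely an endomorphism). Your hedged alternative is the correct one and is what the paper does: it sets $p'_{i,j}=\lambda(e_i^*)e_j$ and verifies $PP'=wI_n=P'P$ directly, using $v=\sum_{k=1}^n e_ke_k^*$ (which is where $|s^{-1}(v)|=n$ enters) together with $\lambda(v)=v$ and $\lambda(e_i^*e_j)=\delta_{i,j}w$; with that correction your argument for $(5)$, and hence $(6)$, proceeds as in the paper. Finally, in $(4)$ you, like the paper, pass quickly over why a bijective $\phi_P$ has inverse again of the form $\phi_Q$; since $\phi_P^{-1}$ fixes all vertices and all edges outside $\{e_1,\ldots,e_n\}$, recognizing it as some $\phi_Q$ requires an argument of the type given in $(5)$, so this step deserves more care than the formal monoid-invertibility phrasing suggests.
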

\begin{proof} %We first note that $\sum^n_{k=1}p_{i,k}p'_{k, j} = \delta_{i, j}w = \sum^n_{k=1}p'_{i,k}p_{k, j}$ for all $i, j$, where $\delta$ is the Kronecker delta.	
	(1) The existence of a unique homomorphism $\phi_{P}: L_K(E)\longrightarrow L_K(E)$ of $K$-algebras with the desired property follows from \cite[Theorem 2.2 (i)]{kn:ataanirolpa}. For the sake of completeness, we give a sketch of the proof. We define the elements $\{Q_u: u\in E^0\}$ and $\{T_e, T_{e^*}: e\in E^1\}$ by setting $Q_u=u$, 
	\begin{equation*}
	T_{e}=  \left\{
	\begin{array}{lcl}
	\sum^n_{k=1} e_kp_{k,i}&  & \text{if } e= e_i \text{ for some } 1\le i\le n\\
	e  & \text{otherwise}.
	\end{array}%
	%\text{.}
	\right.
	\end{equation*}%
	and
	\begin{equation*}
	T_{e^*}=  \left\{
	\begin{array}{lcl}
	\sum^n_{k=1}p'_{i,k}e^*_k&  & \text{if } e= e_i \text{ for some } 1\le i\le n\\
	e^*&  & \text{otherwise}.
	\end{array}%
	%\text{.}
	\right.
	\end{equation*}%
	and show that these elements form a generating set for $L_K(E)$ with the same relations as defining relations for Leavitt path algebra (see \cite[Theorem 2.2 (i)]{kn:ataanirolpa} for details). Therefore, by the Universal Property of Leavitt path algebras, there exists a unique homomorphism 
	$\phi_{P}: L_K(E)\longrightarrow L_K(E)$ of $K$-algebras satisfying $\phi_{P}(u)=Q_u$, $\phi_{P}(e)=T_e$, $\phi_{P}(e^*)=T_{e^*}$ for all $u\in E^0$ and $e\in E^1$. Consequently, we have 
	$\phi_{P}(u)=u, \phi_{P}(e)=e\text{ and } \phi_{P}(e^*)=e^*$  for all $u\in E^0$ and $e\in E^1\setminus \{ e_1,\ldots ,e_n\} $, and 
	\begin{center}
		$\phi_{P}(e_i) = \sum^n_{k=1}e_kp_{k,i}$\quad	and \quad $\phi_{P}(e^*_i) = \sum^n_{k=1}p'_{i,k}e^*_k$
	\end{center} for all $1\le i\le n$.

	We next prove that $\phi_{P}$ is injective by following the proof of \cite[Theorem 2.2 (ii)]{kn:ataanirolpa}. To the contrary, suppose there exists a nonzero element $x\in\ker(\phi_{P})$. Then, by the Reduction Theorem (see, e.g., \cite[Theorem 2.2.11]{AAS:LPA}), there exist $a, b \in L_K(E)$ such that either $axb = u \neq 0$ for some $u\in E^0$, or $axb = p(c)\neq 0$, where $c$ is a cycle in $E$ without exits and $p(x)$ is a nonzero polynomial in $K[x, x^{-1}]$.
	
	In the first case, since $axb\in \ker(\phi_{P})$, this would imply that $u= \phi_{P}(u) = 0$ in $L_K(E)$; but each vertex is well-known to be a nonzero element inside the Leavitt path algebra, which is a contradiction.
	
	So we are in the second case: there exists a cycle $c$ in $E$ without exits such that $axb = \sum^m_{i=-l}k_ic^i\neq 0$, where $k_i\in K$, $l$ and $m$ are nonnegative integers, and we interpret $c^i$ as $(c^*)^{-i}$ for negative $i$, and we interpret $c^0$ as $u := s(c)$. Write $c = g_1g_2\cdots g_t$, where $g_i\in E^1$ and $t$ is a positive integer. If $g_i\in E^1\setminus \{ e_1,\ldots ,e_n\}$ for all $1\le i\le t$, then $\phi_{P}(c) = c$ and $\phi_{P}(c^*) = c^*$, so $0\neq \sum^m_{i=-l}k_ic^i = \sum^m_{i=-l}k_i\phi_{P}(c^i) = \phi_{P}(axb) = 0$ in $L_K(E)$, a contradiction. Consider the case that there exists a $1\le k\le t$ such that $g_k = e_i$ for some $i$. Then, since $c$ is a cycle without exits, we must have $n = 1$ and $k$ is a unique element such that $g_k = e_1$. Let $\alpha := g_{k+1}\cdots g_tg_1\cdots g_{k-1}e_1$. We have that $\alpha$ is a cycle in $E$ without exits and $s(\alpha) = w$. Since $n=1$, $P= p_{1, 1}$ and $P^{-1}= p'_{1, 1}$ are two elements of $wL_K(E)w$ with $p_{1, 1}p'_{1, 1} = w= p'_{1, 1}p_{1, 1}$, so $p_{1,1}$ is a unit of $wL_K(E)w$ with $p_{1,1}^{-1} = p'_{1,1}$. 
	%Moreover, $\phi_{P, Q}(wp_{1,1}w) = \phi_{P, Q}(w)\phi_{P, Q}(p_{1,1})\phi_{P, Q}(w) =w\phi_{P, Q}(p_{1,1})w= wp_{1,1}w$ and $\phi_{P, Q}(wq_{1,1}w) = \phi_{P, Q}(w)\phi_{P, Q}(q_{1,1})\phi_{P, Q}(w) = w\phi_{P, Q}(q_{1,1})w=wq_{1,1}w$. 
	By \cite[Lemma 2.2.7]{AAS:LPA}, we have \[wL_K(E)w = \{\sum^h_{i=l}k_i\alpha^i\mid k_i\in K, l\le h, h, l\in \mathbb{Z}\}\cong K[x, x^{-1}]\]via an isomorphism that sends $v$ to $1$, $\alpha$ to $x$ and $\alpha^*$ to $x^{-1}$, and so $p_{1,1} = a\alpha^s$ and $p'_{1,1} = a^{-1}\alpha^{-s}$ for some $a\in K\setminus \{0\}$ and $s\in \mathbb{Z}$. If $s\ge 0$, then 
	
	\begin{align*}
	\phi_{P}(c) & =\phi_{P}(g_1\cdots g_{k-1}e_1g_{k+1}\cdots g_t) =(g_1\cdots g_{k-1})e_1p_{1,1} (g_{k+1}\cdots g_t)=\\
	&=(g_1\cdots g_{k-1}e_1)a\alpha^s (g_{k+1}\cdots g_t)= a(g_1\cdots g_{k-1}e_1)\alpha^s (g_{k+1}\cdots g_t)=
	ac^{s+1},
	\end{align*} 
	and 
	\begin{align*}
	\phi_{P}(c^*) & =\phi_{P}(g^*_{t}\cdots g^*_{k+1}e^*_1g^*_{k-1}\cdots g^*_{1}) =(g^*_{t}\cdots g^*_{k+1})p'_{1,1}e^*_1 (g^*_{k-1}\cdots g^*_{1})=\\
	&=a^{-1}(g^*_{t}\cdots g^*_{k+1})\alpha^{-s}(e^*_1 g^*_{k-1}\cdots g^*_{1})= (g^*_{t}\cdots g^*_{k+1})(\alpha^*)^{s}(e^*_1 g^*_{k-1}\cdots g^*_{1})=\\& = a^{-1}(c^*)^{s+1}.
	\end{align*} 
	If $s< 0$, then 
	
	\begin{align*}
	\phi_{P}(c) & =\phi_{P}(g_1\cdots g_{k-1}e_1g_{k+1}\cdots g_t) =(g_1\cdots g_{k-1})e_1p_{1,1} (g_{k+1}\cdots g_t)=\\
	&=(g_1\cdots g_{k-1}e_1)a\alpha^s (g_{k+1}\cdots g_t)= a(g_1\cdots g_{k-1}e_1)(\alpha^*)^{-s} (g_{k+1}\cdots g_t)=\\
	&=a (c^*)^{-s-1} = a(c^*)^{-s-1} = ac^{s+1},
	\end{align*} 
	and 
	\begin{align*}
	\phi_{P}(c^*) & =\phi_{P}(g^*_{t}\cdots g^*_{k+1}e^*_1g^*_{k-1}\cdots g^*_{1}) =(g^*_{t}\cdots g^*_{k+1})p'_{1,1}e^*_1 (g^*_{k-1}\cdots g^*_{1})=\\
	&=a^{-1}(g^*_{t}\cdots g^*_{k+1})\alpha^{-s}(e^*_1 g^*_{k-1}\cdots g^*_{1})= (g^*_{t}\cdots g^*_{k+1})(\alpha^*)^{-s}(e^*_1 g^*_{k-1}\cdots g^*_{1})=\\& = a^{-1}(c^*)^{-s-1} =  a^{-1}c^{s+1}.
	\end{align*} 
	Therefore, we obtain that $\phi_{P}(c^l) = a^lc^{l(s+1)}$ for all $l\in \mathbb{Z}$, and $$0\neq \sum^m_{i=-l}k_ia^ic^{i(s+1)} = \sum^m_{i=-l}k_i\phi_{P}(c^i) = \phi_{P}(axb) = 0$$ in  $L_K(E)$, which is a contradiction.
	
	In any case, we arrive at a contradiction, and so we infer that $\phi_{P}$ is injective, as desired.
	
	(2) Assume that $Q= (q_{i,j})\in GL_n(wL_K(E)w)$ and $\phi_P = \phi_Q$. We then have $\sum^n_{k=1}e_kp_{k,j}=\phi_{P}(e_j) = \phi_Q(e_j)= \sum^n_{k=1}e_kq_{k,j}$ for all $1\le j\le n$, and so $$p_{i,j} = wp_{i,j} = e^*_i(\sum^n_{k=1}e_kp_{k,j})=e^*_i(\sum^n_{k=1}e_kq_{k,j}) = wq_{i,j}= q_{i,j}$$
	for all $1\le i, j\le n$. This implies that $P=Q$. The converse is obvious.
	
	(3) Suppose $Q$ is an element of $GL_n(wL_K(E)w)$ with $Q=(q_{i,j})$ and $Q^{-1}=(q'_{i,j})$. We then have  $P\phi_P(Q)\in GL_n(wL_K(E)w)$ and $(P\phi_P(Q))^{-1} = \phi_P(Q^{-1})P^{-1}$.
	
	We claim that $\phi_P\phi_Q = \phi_{_P\phi_P(Q)}$. It suffices to check that 
	\begin{center}
		$\phi_{P}\phi_{Q}(e_i) = \phi_{_P\phi_P(Q)}(e_i)$ and $\phi_{P}\phi_{Q}(e^*_i) = \phi_{_P\phi_P(Q)}(e^*_i)$ for all $1\le i\le n$.	
	\end{center}
	
	For each $1\le i\le n$, by definition of $\phi_{Q}$, $\phi_{Q}(e_i) = \sum^n_{k=1}e_kq_{k, i}$ and $\phi_{Q}(e^*_i) = \sum^n_{k=1}q'_{i, k}e^*_k$,  so 
	\begin{align*}
	\phi_{P}\phi_{Q}(e_i) &=\phi_{P}(\sum^n_{k=1}e_kq_{k, i})=\sum^n_{k=1}\phi_{P}(e_k)\phi_{P}(q_{k,i})=\sum^n_{k=1}\sum^n_{l=1}e_lp_{l,k}\phi_{P}(q_{k,i})\\
	&=\sum^n_{l=1}e_l(\sum^n_{k=1}p_{l,k}\phi_{P}(q_{k,i}))= \phi_{_P\phi_P(Q)}(e_i)
	\end{align*} 
	and 
	\begin{align*}
	\phi_{P}\phi_{Q}(e^*_i)&=\phi_{P}(\sum^n_{k=1}q'_{i, k}e^*_k)=\sum^n_{k=1}\phi_{P}(q'_{i, k})\phi_{P}(e^*_k)= \sum^n_{k=1}\sum^n_{l=1}\phi_{P}(q'_{i, k})p'_{k,l}e^*_l\\
	& =\sum^n_{l=1}(\sum^n_{k=1}\phi_{P}(q'_{i, k})p'_{k,l})e^*_l= \phi_{_P\phi_P(Q)}(e^*_i),
	\end{align*} 
	proving the claim. 
	
	We show that  $\phi^m_P = \phi_{P_m}$ for all positive integer $m$. First, note that $P_m\in GL_n(wL_K(E)w)$ with $P^{-1}_m = \varphi^{m-1}_P\left(P^{-1}\right)\cdots\varphi_P\left(P^{-1}\right)P^{-1}$. We use induction on $m$ to establish the fact $\phi^m_P = \phi_{P_m}$ for all $m\ge 1$. If $m=1$, then the fact is obvious. Now we proceed inductively. For $m >1$, by the induction hypothesis, $\phi^{m-1}_P = \phi_{P_{m-1}}$, and so 
	$$\phi_{P}^{m} = \phi_P \phi^{m-1}_P = \phi_P\phi_{P_{m-1}}= \phi_{_P\phi_P(P_{m-1})}=\phi_{P_m},$$ as desired.
	
	(4) ($\Rightarrow$) Assume that $\phi_P$ is an isomorphism, that means, there exists a matrix $Q\in GL_n(wL_K(E)w)$ such that $\phi_P\phi_Q= id_{L_K(E)}$. Then, by item (3), $\phi_{_P\phi_P(Q)} = id_{L_K(E)}$, and so $P\phi_P(Q)$ is the identity of $M_n(wL_K(E)w)$ by item (2). This shows that $P^{-1} = \phi_P(Q)$.
	
	$(\Leftarrow)$ Assume that $P^{-1} = \phi_P(Q)$ for some $Q\in GL_n(wL_K(E)w)$. Then, by item (3), $\phi_P\phi_Q = \phi_{_P\phi_P(Q)} = \phi_{PP^{-1}} = id_{L_K(E)}$, and so $\phi_P$ is surjective. By item (1), $\phi_P$ is always injective, and hence $\phi_P$ is an isomorphism with $\phi_{P}^{-1}=\phi_{Q}$.
	This implies that $id_{L_K(E)}= \phi^m_P\phi^m_Q = \phi_{P_m}\phi_{Q_m}= \phi_{_{P_{m}\phi_{P_{m}}(Q_m)}}$, so $\phi_{P_m}^{-1} = \phi_{Q_m}$ and $P_{m}\phi_{P_{m}}(Q_m)= wI_n$ for all $m\ge 1$. Consequently, $P_m = \phi_{P_m}(Q^{-1}_m)$ and $P^{-1}_m = \phi_{P_m}(Q_m)$ for all $m\ge 1$.
	
	In particular, suppose $\phi_P(P) = P$.  Since $\phi_P$ is a $K$-algebra homomorphism, $P\phi_P(P^{-1})= \phi_P(P)\phi_P(P^{-1}) = \phi_P(PP^{-1}) = \phi_P(wI_n) = wI_n$, so $P^{-1}=\phi_P(P^{-1})$. Similarly, we obtain that if $P^{-1}=\phi_P(P^{-1})$, then $P=\phi_P(P)$. Hence, in any case, we have that $P=\phi_P(P)$ and $P^{-1}=\phi_P(P^{-1})$. We then have $P^m \phi_P(P^{m}) = wI_n$ for all $m\in \mathbb{Z}$, so $\phi_P$ is an isomorphism and $\phi_P^{m} = \phi_{P^{m}}$ for all $m\in \mathbb{Z}$.

	(5) Assume that $|s^{-1}(v)| = n$ and let $\lambda: L_K(E)\to L_K(E)$ be a $K$-algebra homomorphism  with $\lambda(u)=u, \lambda(e)=e\text{ and } \lambda(e^*)=e^*$  for all $u\in E^0$ and $e\in E^1\setminus \{ e_1,\ldots ,e_n\}$. We then have $$\lambda(e_i)= \lambda(e_iw)= \lambda(e_i)\lambda(w)=\lambda(e_i)w$$ and 
	$$\lambda(e^*_i)= \lambda(we^*_i)= \lambda(w)\lambda(e^*_i)=w\lambda(e^*_i)$$ for all $1\le i\le n$, so $e^*_i\lambda(e_j)$ and $\lambda(e^*_i)e_j\in wL_K(E)w$ for all $1\le i\le n$.
	
	Let $P = (p_{i,j})$ and $P' = (p'_{i,j})\in M_n(wL_K(E)w)$ with $p_{i, j} = e^*_i\lambda(e_j)$ and $p'_{i, j} = \lambda(e^*_i)e_j$ for all $1\le i, j\le n$. We claim that $P\in GL_n(wL_K(E)w)$ with $P^{-1} = P'$. Indeed, since $|s^{-1}(v)|=n$, we must have $s^{-1}(v) = \{e_1, e_2, \ldots, e_n\}$ and $v = \sum^n_{i=1}e_ie^*_i$, and so 
	\[\sum^n_{k=1}p_{i,k}p'_{k,j} = \sum^n_{k=1}e^*_i\lambda(e_k) \lambda(e^*_k)e_j = e^*_i\lambda(\sum^n_{k=1}e_ke^*_k)e_j = e^*_i\lambda(v)e_j = \delta_{i, j}w\] and \[\sum^n_{k=1}p'_{i,k}p_{k,j} = \sum^n_{k=1}\lambda(e^*_i)e_ke^*_k\lambda(e_j) = \lambda(e^*_i)(\sum^n_{k=1}e_ke^*_k)\lambda(e_j) = \lambda(e^*_ie_j) = \delta_{i, j}w\] for all $1\le i, j\le n$,  where $\delta$ is the Kronecker delta. This implies that $PP' = wI_n = P'P$, showing the claim.
	
	We show that $\lambda = \phi_P$. It suffices to check that $\lambda(e_i) = \phi_P(e_i)$ and $\lambda(e^*_i) = \phi_P(e^*_i)$ for all $1\le i\le n$. For each $1\le i\le n$, by definition of $\phi_P$, we have $$\phi(e_i)= \sum^n_{k=1}e_ke^*_k\lambda(e_i) =(\sum^n_{k=1}e_ke^*_k)\lambda(e_i) = v\lambda(e_i)=\lambda(ve_i)=\lambda(e_i)$$ and $$\phi(e^*_i)= \sum^n_{k=1}\lambda(e^*_i)e_ke^*_k = \lambda(e^*_i)(\sum^n_{k=1}e_ke^*_k) = \lambda(e_i)v=\lambda(e^*_iv)=\lambda(e^*_i),$$ as desired.
	
	(6) We always have that $(GL_n(wL_K(E)w), \star)$ is a monoid with identity element $wI_n$. Then, the statement immediately follows from items (1), (2), (3) and (5), thus finishing the proof.
\end{proof}

Consequently, we obtain a method to construct graded endomorphisms and graded automorphisms of Leavitt path algebras of arbitrary graphs over an arbitrary field in terms of general linear groups over corners of these algebras.

\begin{cor}\label{gr-Iso}
	Let $K$ be a field, $n$ a positive integer, $E$ a graph, and $v$ and $w$ vertices in $E$ (they may be the same). Let $e_1, e_2, \ldots, e_n$ be distinct edges in $E$ with $s(e_i) = v$ and $r(e_i) = w$ for all $1\le i\le n$.  Let $P$ be an element of $GL_n(wL_K(E)_0w)$ with $P=(p_{i,j})$ and $P^{-1}=(p'_{i,j})$. Then the following statements hold:
	
	$(1)$ There exists a unique graded homomorphism $\phi_P:L_K(E)\longrightarrow L_K(E)$ of $K$-algebras satisfying
	\begin{center}
		$\phi _P(u)=u,\quad \phi _P(e)=e\quad\text{ and }\quad \phi _P(e^*)=e^*$ \end{center} for all $u\in E^0$ and $e\in E^1\setminus \{ e_1,\ldots ,e_n\} $, and 
	\begin{center}
		$\phi_P(e_i) = \sum^n_{k=1}e_kp_{k,i}$\quad	and \quad $\phi_P(e^*_i) = \sum^n_{k=1}p'_{i,k}e^*_k$
	\end{center} for all $1\le i\le n$.
	
	$(2)$ $\phi_P$ is a graded isomorphism if and only if $P^{-1} = \phi_P(Q)$ for some $Q\in GL_n(wL_K(E)_0w)$. In this case, $\phi_{P_m}^{-1} = \phi_{Q_m}$, $P_m = \phi_{P_m}(Q^{-1}_m)$ and $P^{-1}_m = \phi_{P_m}(Q_m)$ for all $m\ge 1$. In particular, if $\phi_{P}(P)= P$ or $\phi_{P}(P^{-1})= P^{-1}$, then $\phi_{P}$ is a graded isomorphism and $\phi_{P}^{m}=\phi_{P^{m}}$ for all integer $m$.
	
	$(3)$  Assume that $|s^{-1}(v)| = n$ and we denote by $End^{gr}_{v,w}(L_K(E))$ the set of all graded endomorphisms $\lambda$ of $L_K(E)$ with  $\lambda(u)=u,\ \lambda(e)=e \text{ and } \lambda(e^*)=e^*$  for all $u\in E^0$ and $e\in E^1\setminus \{ e_1,\ldots ,e_n\}$. Then, the map $\Phi: (GL_n(wL_K(E)_0w), \star)\longrightarrow End^{gr}_{v,w}(L_K(E))$, $P\longmapsto \phi_P$, is a monoid isomorphism, where the multiplication law ``$\star$" is defined by $$P\star Q = P\phi_P(Q)$$ for all $P, Q\in GL_n(wL_K(E)_0w)$.
\end{cor}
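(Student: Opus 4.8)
The plan is to deduce the whole corollary from Theorem~\ref{Iso}, the one new ingredient being that restricting the matrix entries to the degree-zero corner $wL_K(E)_0w$ forces the resulting homomorphism to be graded. Since $GL_n(wL_K(E)_0w)\subseteq GL_n(wL_K(E)w)$, Theorem~\ref{Iso}(1) already yields, for part~(1), a unique injective $K$-algebra homomorphism $\phi_P$ with the prescribed action on generators; the only thing left is gradedness, which I would verify directly on the homogeneous generators. The fixed generators $u,e,e^*$ retain their degrees $0,1,-1$, while $\phi_P(e_i)=\sum_k e_kp_{k,i}$ has degree $1$ and $\phi_P(e_i^*)=\sum_k p'_{i,k}e_k^*$ has degree $-1$, precisely because every $p_{k,i},p'_{i,k}\in wL_K(E)_0w$ is homogeneous of degree $0$. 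As $\phi_P$ carries homogeneous generators to homogeneous elements of the same degree, it is graded.

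The only genuine obstacle is a short piece of graded linear algebra that I would isolate at the outset: \emph{if $M\in M_n(wL_K(E)_0w)$ is invertible in $M_n(wL_K(E)w)$, then $M^{-1}\in M_n(wL_K(E)_0w)$.} Indeed, $M_n(wL_K(E)w)$ carries the induced $\mathbb{Z}$-grading with identity $wI_n$ of degree $0$; writing $M^{-1}=\sum_d(M^{-1})_d$ in homogeneous components and comparing degrees in $M(M^{-1})=wI_n$ forces $M(M^{-1})_d=0$ for $d\ne 0$, whence $(M^{-1})_d=0$ by invertibility. Equivalently, $GL_n(wL_K(E)_0w)=GL_n(wL_K(E)w)\cap M_n(wL_K(E)_0w)$, which is exactly what lets me pass between the two corners.

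For part~(2), the backward direction is immediate: given $P^{-1}=\phi_P(Q)$ with $Q\in GL_n(wL_K(E)_0w)$, part~(1) makes $\phi_Q$ graded, and Theorem~\ref{Iso}(3) gives $\phi_P\phi_Q=\phi_{P\phi_P(Q)}=\phi_{wI_n}=id_{L_K(E)}$, so $\phi_P$ is a graded isomorphism with graded inverse $\phi_Q$. For the forward direction I would first invoke Theorem~\ref{Iso}(4) to get $Q\in GL_n(wL_K(E)w)$ with $\phi_P^{-1}=\phi_Q$ and $P^{-1}=\phi_P(Q)$, and then upgrade $Q$ to the degree-zero corner: since $\phi_P$ is a graded bijection, $\phi_P^{-1}=\phi_Q$ is graded, and the identity $q_{i,j}=e_i^*\phi_Q(e_j)$ (valid from relation~(3) alone, since $e_i^*e_k=\delta_{i,k}w$) shows $q_{i,j}=e_i^*\phi_P^{-1}(e_j)$ is a product of a degree $-1$ and a degree $1$ element, hence homogeneous of degree $0$; so $Q\in M_n(wL_K(E)_0w)$, and the lemma gives $Q\in GL_n(wL_K(E)_0w)$. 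The remaining identities $\phi_{P_m}^{-1}=\phi_{Q_m}$, $P_m=\phi_{P_m}(Q_m^{-1})$, $P_m^{-1}=\phi_{P_m}(Q_m)$ and the ``$\phi_P(P)=P$'' consequence then reproduce the computations in the proof of Theorem~\ref{Iso}(4) verbatim, now inside the degree-zero corner.

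Finally, for part~(3) (with $|s^{-1}(v)|=n$), I would first check that $\star$ restricts to $GL_n(wL_K(E)_0w)$: $P\star Q=P\phi_P(Q)$ has degree-zero entries because $\phi_P$ is graded, and it is invertible with inverse $\phi_P(Q^{-1})P^{-1}$, again of degree zero, so $GL_n(wL_K(E)_0w)$ is a submonoid of $(GL_n(wL_K(E)w),\star)$ with identity $wI_n$. That $\Phi$ is a monoid homomorphism sending $wI_n$ to $id_{L_K(E)}$ follows from Theorem~\ref{Iso}(3), and its injectivity from Theorem~\ref{Iso}(2). Surjectivity is the one place gradedness must be fed back in: given $\lambda\in End^{gr}_{v,w}(L_K(E))$, Theorem~\ref{Iso}(5) produces a unique $P\in GL_n(wL_K(E)w)$ with $p_{i,j}=e_i^*\lambda(e_j)$, $(P^{-1})_{i,j}=\lambda(e_i^*)e_j$ and $\lambda=\phi_P$; since $\lambda$ is graded, both $p_{i,j}$ and $(P^{-1})_{i,j}$ are products of a degree $-1$ factor and a degree $1$ factor, hence lie in $wL_K(E)_0w$, so $P\in GL_n(wL_K(E)_0w)$. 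Thus $\Phi$ is a monoid isomorphism onto $End^{gr}_{v,w}(L_K(E))$. I expect no difficulty beyond bookkeeping once the graded-invertibility lemma of the second paragraph is in hand.
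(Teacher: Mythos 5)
Your proof is correct and takes essentially the same route as the paper: every part is reduced to Theorem~\ref{Iso}, with gradedness in (1) checked on homogeneous generators via the degree-zero entries of $P$ and $P^{-1}$, and surjectivity in (3) obtained by noting that $p_{i,j}=e_i^*\lambda(e_j)$ lies in $wL_K(E)_0w$ when $\lambda$ is graded. Your graded-invertibility lemma and the explicit upgrade of $Q$ to $GL_n(wL_K(E)_0w)$ in part~(2) (via $q_{i,j}=e_i^*\phi_P^{-1}(e_j)$ and gradedness of the inverse of a graded bijection) carefully fill in a step the paper dismisses as ``immediately follows from Theorem~\ref{Iso}~(4)'' --- a welcome extra precision, not a different method.
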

\begin{proof} 
	(1) By Theorem \ref{Iso}, there exists a unique homomorphism $\phi_{P}: L_K(E)\longrightarrow L_K(E)$ of $K$-algebras satisfying $\phi_{P}(u)=u, \phi_{P}(e)=e\text{ and } \phi_{P}(e^*)=e^*$  for all $u\in E^0$ and $e\in E^1\setminus \{ e_1,\ldots ,e_n\} $, and 
	\begin{center}
		$\phi_{P}(e_i) = \sum^n_{k=1}e_kp_{k,i}$\quad	and \quad $\phi_{P}(e^*_i) = \sum^n_{k=1}p'_{i,k}e^*_k$
	\end{center} for all $1\le i\le n$.  It is obvious that $\phi_P(u)$ has degree $0$ for all $u\in E^0$. Since $p_{i,j}$ and $p'_{i, j}\in L_K(E)_0$ for all $1\le i, j\le n$, $\phi_P(e)$ has degree $1$ and $\phi_P(e^*)$ has degree $-1$ for all $e\in E^1$. Therefore, $\phi_P$ is a $\mathbb{Z}$-graded homomorphism.
	
	(2) It immediately follows from Theorem \ref{Iso} (4).
	
(3) We note that for all $P, Q\in GL_n(wL_K(E)_0w)$, we obtain that $\phi_P(Q)\in GL_n(wL_K(E)_0w)$ (by item (1)) and 
$P\star Q = P\phi_P(Q)\in GL_n(wL_K(E)_0w)$, and so $GL_n(wL_K(E)_0w)$ is a submonoid of the monoid $(GL_n(wL_K(E)w), \star)$. Then, by Theorem \ref{Iso} (6), the map $\Phi: (GL_n(wL_K(E)_0w), \star)\longrightarrow End^{gr}_{v,w}(L_K(E))$, $P\longmapsto \phi_P$, is a monoid injection. 
	
We claim that $\Phi$ is surjective. Indeed, let $\lambda \in  End^{gr}_{v,w}(L_K(E))$. Then, by Theorem \ref{Iso} (5), there exists a unique matrix $P = (p_{i,j})\in GL_n(wL_K(E)w)$ such that $p_{i, j} = e^*_i\lambda(e_j)$ for all $1\le i, j\le n$ and $\lambda = \phi_P$. Since $\lambda$ is a graded homomorphism, $\lambda(e_j)$ has degree $1$ for all $1\le j\le n$, and so $p_{i, j} = e^*_i\lambda(e_j)\in L_K(E)_0$ for all $1\le i, j\le n$. This implies that $P\in GL_n(wL_K(E)_0w)$ and $\Phi(P) = \phi_P = \lambda$, showing the claim. Therefore, we have that $\Phi$ is a monoid isomorphism, thus finishing the proof.
\end{proof}

For clarification, we illustrate Theorem \ref{Iso} and Corollary~\ref{gr-Iso} by presenting the following example, which describes completely all (graded) endomorphisms and (graded) automorphism of the Levitt path algebra of the rose graph $R_1$ with one petal. 

\begin{exas}
	Let $K$ be a field and $R_1$ be the following graph. $$R_1 = \xymatrix{\bullet^{v}\ar@(ul,ur)^e}$$ Then $L_K(R_1)\cong K[x, x^{-1}]$ via an isomorphism that sends $v$ to $1$, $e$ to $x$ and $e^*$ to $x^{-1}$. We then have that the group $U(L_K(R_1))$ of units of $L_K(R_1)$ is exactly the set $\{ae^m\mid a\in K\setminus\{0\}, m\in \mathbb{Z}\}$. For any $P = ae^m\in U(L_K(R_1))$, by Theorem \ref{Iso} (1), we have the endomorphism $\phi_P$ defined by: $v\longmapsto v$, $e\longmapsto ae^{m+1}$ and $e^*\longmapsto a^{-1}e^{-m-1}$. By Theorem \ref{Iso} (6), $End(L_K(R_1))$ is exactly the set $\{\phi_P\mid P\in U(L_K(R_1))\}$. We note that $a^{-1}e^{-m} = P^{-1} = \phi_P(be^l)$ if and only if $m=l =0$ and $b= a^{-1}$, or $m= l=-2$ and $b= a$. By Theorem \ref{Iso} (4), the automorphism group $Aut(L_K(R_1))$ of $L_K(R_1)$ is exactly the set $\{\phi_{a}, \phi_{be^{-2}}\mid a, b\in K\setminus \{0\}\}$.
	
	We have that $L_K(R_1)_0 = K$, and so  $End^{gr}(L_K(R_1))$ is exactly the set $\{\phi_a\mid a\in K\setminus \{0\}\}$ (by Corollary \ref{gr-Iso} (1)), which is isomorphic to the group $ K\setminus \{0\}$. We also have that $Aut^{gr}(L_K(R_1))$  is equal to $End^{gr}(L_K(R_1))$.
\end{exas}

\noindent The next aim of this section is to completely describe (graded) endomorphisms and (graded) automorphisms of the Leavitt algebra of type $(1;n)$ in terms of the general linear group of degree $n$ over this algebra.

Let $K$ be a field and $n \ge 2$ any integer. Then the  \textit{Leavitt $K$-algebra of type} $(1;n)$, denoted by $L_K(1, n)$,  is the $K$-algebra 
\begin{center}
$K\langle x_1, \hdots, x_n, y_1, \hdots, y_n\rangle/\langle \sum^{n}_{i=1}x_iy_i -1, y_ix_j - \delta_{i,j}1\mid 1\le i, j\le n \rangle.$\end{center}

 Notationally, it is often more convenient to view $L_K(1, n)$  as the free associative $K$-algebra on the $2n$ variables $x_1, \hdots, x_n, y_1, \hdots, y_n$ subject to the relations $ \sum^{n}_{i=1}x_iy_i =1$ and $y_ix_j = \delta_{i,j}1 \, (1\le i, j\le n)$; see \cite{leav:tmtoar} for more details.

For any integer $n\ge 2$, we let $R_n$ denote the \textit{rose graph with $n$ petals} having one vertex and $n$ loops:
$$R_n = \xymatrix{ & {\bullet^v} \ar@(ur,dr)^{e_1}  \ar@(u,r)^{e_2} \ar@(ul,ur)^{e_3}  \ar@{.} @(l,u) \ar@{.} @(dr,dl)
	\ar@(r,d)^{e_n}  \ar@{}[l] ^{\hdots} } \ \ $$ Then $L_K(R_n)$ 	is defined to be the $K$-algebra generated by $v$, $e_1, \ldots, e_n$, $e^*_1, \ldots, e^*_n$, satisfying the following relations 
\begin{center}
	$v^2 =v, ve_i = e_i= e_iv$, $ve^*_i = e^*_i = e^*_i v$, $e^*_i e_j = \delta_{i,j} v$ and $\sum^n_{i=1}e_ie^*_i =v$	
\end{center}
for all $1\le i, j\le n$. In particular $v = 1_{L_K(R_n)}$. 

\begin{rem}
By \cite[Proposition 1.3.2]{AAS:LPA} (see, also \cite[Proposition 2.6]{kn:ataanirolpa}), $L_K(1, n)\cong L_K(R_n)$ as $K$-algebras, by the mapping: $1\longmapsto v$, $x_i\longmapsto e_i$	and $y_i\longmapsto e^*_i$ for all $1\le i\le n$. 
{\it With this fact in mind, for the remainder of this article we investigate (graded) automorphisms of the Leavitt algebra $L_K(1, n)$ by equivalently investigating (graded) automorphisms of the Leavitt path algebra $L_K(R_n)$}.
\end{rem}

\noindent The following proposition describes completely endomorphisms and automorphisms of $L_K(R_n)$ in terms of the general linear group of degree $n$ over $L_K(R_n)$.

\begin{prop}\label{IsoofLA}
	Let $n\ge 2$ be a positive integer, $K$ a field and $R_n$ the rose graph with $n$ petals. Let $P$ be an element of $GL_n(L_K(R_n))$ with $P=(p_{i,j})$ and $P^{-1}=(p'_{i,j})$. Then the following statements hold:
	
	$(1)$ There exists a unique injective homomorphism $\phi_{P}: L_K(R_n)\longrightarrow L_K(R_n)$ of $K$-algebras satisfying 
	$\phi_{P}(v)=v$, $\phi_{P}(e_i) = \sum^n_{k=1}e_kp_{k,i}$	and $\phi_{P}(e^*_i) = \sum^n_{k=1}p'_{i,k}e^*_k$ for all $1\le i\le n$. 
	
	$(2)$ $\phi_P\phi_Q = \phi_{_P\phi_P(Q)}$ for all $Q\in GL_n(L_K(R_n))$. In particular, $\phi^m_P = \phi_{P_m}$ for all positive integer $m$. %where $P_m := P\phi_P(P)\cdots \phi^{m-1}_P(P)$.
	
	$(3)$ $\phi_P\in Aut(L_K(R_n))$ if and only if $P^{-1} = \phi_P(Q)$ for some $Q\in GL_n(L_K(R_n))$. In this case, $\phi_{P_m}^{-1} = \phi_{Q_m}$, $P_m = \phi_{P_m}(Q^{-1}_m)$ and $P^{-1}_m = \phi_{P_m}(Q_m)$ for all $m\ge 1$. In particular, if $\phi_{P}(P)= P$ or $\phi_{P}(P^{-1})= P^{-1}$, then $\phi_{P}$ is an isomorphism and $\phi_{P}^{m}=\phi_{P^{m}}$ for all integer $m$.
	
	$(4)$ The map $\Phi: (GL_n(L_K(R_n)), \star)\longrightarrow End(L_K(R_n))$, $P\longmapsto \phi_P$, is a monoid isomorphism, where the multiplication law ``$\star$" is defined by $$P\star Q = P\phi_P(Q)$$ for all $P, Q\in GL_n(L_K(R_n))$.
\end{prop}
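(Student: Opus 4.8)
The plan is to deduce this proposition as the specialization of Theorem~\ref{Iso} to the rose graph $R_n$, so almost everything is pure transcription. First I would fix the dictionary between the two settings. The graph $R_n$ has exactly one vertex $v$ and exactly $n$ edges $e_1,\dots,e_n$, each a loop at $v$; thus I take $v=w$ and let $e_1,\dots,e_n$ be all of $E^1$. Since $v=1_{L_K(R_n)}$, the corner is the whole algebra, $wL_K(E)w=vL_K(R_n)v=L_K(R_n)$, and hence $GL_n(wL_K(E)w)=GL_n(L_K(R_n))$. Moreover $|s^{-1}(v)|=n$, so the extra hypothesis needed for parts $(5)$ and $(6)$ of Theorem~\ref{Iso} is automatically satisfied.

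With this dictionary in place, parts $(1)$, $(2)$ and $(3)$ are immediate. Part $(1)$ is Theorem~\ref{Iso}$(1)$, giving the existence of the unique homomorphism with the stated formulas on $v$, $e_i$, $e^*_i$ together with its injectivity. Part $(2)$ is Theorem~\ref{Iso}$(3)$: the composition law $\phi_P\phi_Q=\phi_{P\phi_P(Q)}$ and its consequence $\phi_P^m=\phi_{P_m}$ hold verbatim once one reads $GL_n(wL_K(E)w)$ as $GL_n(L_K(R_n))$. Part $(3)$ is Theorem~\ref{Iso}$(4)$: the criterion $P^{-1}=\phi_P(Q)$ for $\phi_P$ to be an automorphism, the accompanying identities $\phi_{P_m}^{-1}=\phi_{Q_m}$, $P_m=\phi_{P_m}(Q_m^{-1})$ and $P^{-1}_m=\phi_{P_m}(Q_m)$, and the special case under $\phi_P(P)=P$ or $\phi_P(P^{-1})=P^{-1}$.

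The only point requiring a separate remark is part $(4)$, where the target monoid is $End(L_K(R_n))$ rather than $End_{v,w}(L_K(R_n))$, and this identification is essentially the sole (and minor) obstacle. Here I would observe that the two monoids coincide for $R_n$: since $E^1=\{e_1,\dots,e_n\}$, the set $E^1\setminus\{e_1,\dots,e_n\}$ is empty, so the defining constraints of $End_{v,w}$ collapse to the single condition $\lambda(v)=v$; and because $v=1_{L_K(R_n)}$ and all algebra homomorphisms considered here are unital, every $\lambda\in End(L_K(R_n))$ fixes $v$ automatically. Hence $End_{v,w}(L_K(R_n))=End(L_K(R_n))$, and part $(4)$ is exactly Theorem~\ref{Iso}$(6)$ in this case, whose injectivity rests on Theorem~\ref{Iso}$(2)$ and whose surjectivity uses Theorem~\ref{Iso}$(5)$ (this is the step where $|s^{-1}(v)|=n$ is invoked). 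Since $(GL_n(L_K(R_n)),\star)$ is a monoid with identity $vI_n$, it follows that $\Phi\colon P\longmapsto\phi_P$ is the claimed monoid isomorphism, completing the argument.
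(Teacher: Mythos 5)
Your proposal is correct and takes exactly the route of the paper, whose entire proof of this proposition is the one-line remark that it follows immediately from Theorem~\ref{Iso}; you have merely made the specialization explicit (taking $v=w$, $wL_K(R_n)w=L_K(R_n)$, and $|s^{-1}(v)|=n$). Your observation that $End_{v,w}(L_K(R_n))=End(L_K(R_n))$ because $E^1\setminus\{e_1,\dots,e_n\}=\emptyset$ and every homomorphism is unital (hence fixes $v=1_{L_K(R_n)}$, per the paper's stated convention on homomorphisms of unital algebras) is precisely the implicit identification the paper relies on.
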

\begin{proof}
	It immediately follows from Theorem \ref{Iso}.
\end{proof}

The following proposition describes completely graded endomorphisms and graded automorphisms of $L_K(R_n)$ in terms of the general linear group of degree $n$ over $L_K(R_n)_0$.

\begin{prop}\label{gr-IsoofLA}
	Let $n\ge 2$ be a positive integer, $K$ a field and $R_n$ the rose graph with $n$ petals. Let $P$ be an element of $GL_n(L_K(R_n)_0)$ with $P=(p_{i,j})$ and $P^{-1}=(p'_{i,j})$. Then the following statements hold:
	
	$(1)$ There exists a unique graded homomorphism $\phi_{P}: L_K(R_n)\longrightarrow L_K(R_n)$ of $K$-algebras satisfying 
	$\phi_{P}(v)=v$, $\phi_{P}(e_i) = \sum^n_{k=1}e_kp_{k,i}$	and $\phi_{P}(e^*_i) = \sum^n_{k=1}p'_{i,k}e^*_k$ for all $1\le i\le n$. 
	
	$(2)$ $\phi_P\in Aut^{gr}(L_K(R_n))$ if and only if there exists a matrix $Q\in GL_n(L_K(R_n)_0)$ such that $P^{-1} = \phi_P(Q)$. In this case, $\phi_{P_m}^{-1} = \phi_{Q_m}$,  $P_m = \phi_{P_m}(Q^{-1}_m)$ and $P^{-1}_m = \phi_{P_m}(Q_m)$ for all $m\ge 1$. In particular, if $\phi_{P}(P)= P$ or $\phi_{P}(P^{-1})= P^{-1}$, then $\phi_{P}$ is a graded isomorphism and $\phi_{P}^{m}=\phi_{P^{m}}$ for all integer $m$.
	
	$(3)$ The map $\Phi: (GL_n(L_K(R_n)_0), \star)\longrightarrow End^{gr}(L_K(R_n))$, $P\longmapsto \phi_P$, is a monoid isomorphism, where the multiplication law ``$\star$" is defined by $$P\star Q = P\phi_P(Q)$$ for all $P, Q\in GL_n(L_K(R_n)_0)$.
\end{prop}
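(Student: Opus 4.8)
The plan is to deduce the entire statement from Corollary~\ref{gr-Iso} by specializing the general graph $E$ to the rose graph $R_n$. Concretely, $R_n$ has a single vertex $v$ together with the $n$ loops $e_1,\ldots,e_n$, and every loop satisfies $s(e_i)=v=r(e_i)$. Thus I would invoke Corollary~\ref{gr-Iso} with the choice $w=v$ and with the distinguished edges taken to be all $n$ loops. The crucial point that unlocks the strongest conclusion is that $s^{-1}(v)=\{e_1,\ldots,e_n\}$, so $|s^{-1}(v)|=n$; this is precisely the hypothesis required for part~(3) of Corollary~\ref{gr-Iso}.

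The one identification to record at the outset is that of the relevant corner. Since $R_n$ has finitely many (indeed one) vertices, $L_K(R_n)$ is unital with $1=v$, so $v$ acts as the identity and $vL_K(R_n)v=L_K(R_n)$; restricting to the degree-zero component, $vL_K(R_n)_0v=L_K(R_n)_0$. Consequently $GL_n(wL_K(E)_0w)=GL_n(vL_K(R_n)_0v)=GL_n(L_K(R_n)_0)$, which is exactly the group appearing in the hypothesis of the proposition. With this dictionary in place, the matrices $P,Q$ and the twisted product $\star$ in the proposition are literally those of Corollary~\ref{gr-Iso}.

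Having set up the specialization, the three parts follow directly. Part~(1) is Corollary~\ref{gr-Iso}~(1) verbatim (existence and uniqueness of the graded homomorphism $\phi_P$ with the prescribed action on $v$, the $e_i$, and the $e_i^*$). Part~(2) is Corollary~\ref{gr-Iso}~(2), which supplies both the invertibility criterion $P^{-1}=\phi_P(Q)$ and the accompanying formulas $\phi_{P_m}^{-1}=\phi_{Q_m}$, $P_m=\phi_{P_m}(Q^{-1}_m)$, $P^{-1}_m=\phi_{P_m}(Q_m)$, together with the special case in which $\phi_P(P)=P$ or $\phi_P(P^{-1})=P^{-1}$. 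Part~(3) is Corollary~\ref{gr-Iso}~(3); here I would also note that, because \emph{all} edges of $R_n$ are distinguished, the side condition $\lambda(e)=e$ for $e\in E^1\setminus\{e_1,\ldots,e_n\}$ is vacuous, and since any $K$-algebra homomorphism fixes the identity $v=1$, the set $End^{gr}_{v,v}(L_K(R_n))$ coincides with the full $End^{gr}(L_K(R_n))$ appearing in the proposition.

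I do not anticipate a genuine obstacle here: the proposition is a clean specialization, and the entire content reduces to recognizing $R_n$ inside the framework of Corollary~\ref{gr-Iso} and verifying the corner identification $vL_K(R_n)_0v=L_K(R_n)_0$. The only point demanding any care is making sure that the distinguished loops genuinely exhaust $s^{-1}(v)$, so that the hypothesis $|s^{-1}(v)|=n$ of Corollary~\ref{gr-Iso}~(3) is met; this is immediate for $R_n$, and it is exactly what guarantees that every graded endomorphism fixing $v$ arises as some $\phi_P$, thereby promoting the monoid \emph{injection} of Theorem~\ref{Iso}~(6) to the monoid \emph{isomorphism} asserted in part~(3).
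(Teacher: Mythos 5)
Your proposal is correct and matches the paper exactly: the paper's proof of this proposition is the one-line remark that it ``immediately follows from Corollary~\ref{gr-Iso},'' and your write-up simply makes that specialization explicit (taking $v=w$ the unique vertex, the distinguished edges to be all $n$ loops so that $|s^{-1}(v)|=n$, and identifying $vL_K(R_n)_0v=L_K(R_n)_0$ and $End^{gr}_{v,v}(L_K(R_n))=End^{gr}(L_K(R_n))$). No gaps.
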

\begin{proof}
	It immediately follows from Corollary \ref{gr-Iso}.
\end{proof}

%Corollaries \ref{IsoofLA} (4) and \ref{gr-IsoofLA} (3) show that $Aut(L_K(R_n))$ and $Aut^{gr}(L_K(R_n))$ are isomorphic to the groups of units of the momoids $(GL_n(L_K(R_n)), \star)$ and $(GL_n(L_K(R_n)_0), \star)$, respectively. The following fact provides us with an useful subgroup of $Aut(L_K(R_n))$ and $Aut^{gr}(L_K(R_n))$.\begin{cor}\label{subgp-IsoofLA}Let $n\ge 2$ be a positive integer, $K$ a field and $R_n$ the rose with $n$ petals. Let $G(L_K(R_n)) = \{P\in GL_n(L_K(R_n))\mid \phi_P(P) = P\}$ and $G^{gr}(L_K(R_n)) = \{P\in GL_n(L_K(R_n)_0)\mid \phi_P(P) = P\}$. Then, the following statements hold:$(1)$ $G(L_K(R_n))$ is a subgroup of the monoid $(GL_n(L_K(R_n)), \star)$ and  $G^{gr}(L_K(R_n))$ is a subgroup of the monoid $(GL_n(L_K(R_n)_0), \star)$.$(2)$ There exists an injective homomorphism $\phi: G(L_K(R_n))\to Aut(L_K(R_n))$ of groups such that $\phi(P) = \phi_P$ for all $P\in G(L_K(R_n))$, and $\phi|_{G(L_K(R_n))}: G^{gr}(L_K(R_n))\to Aut^{gr}(L_K(R_n))$ is an injective homomorphism of groups.\end{cor}\begin{proof}(1) We claim that $G(L_K(R_n))$ is a subgroup of $(GL_n(L_K(R_n)), \star)$. It is obvious that $I_n\in G(L_K(R_n))$. Let $P$ and $Q$ be two elements of $G(L_K(R_n))$. We then have $\phi_P(P) = P$ and $\phi_Q(Q) = Q$, so 	\end{proof}

The following corollary gives that the general linear group $GL_n(K)$ of degree $n$ over a field $K$ may be considered as a subgroup of the graded automorphism group $Aut^{gr}(L_K(R_n))$ of $L_K(R_n)$.

\begin{cor}\label{gl-IsoofLA}
Let $n\ge 2$ be a positive integer, $K$ a field and $R_n$ the rose graph with $n$ petals. Then, there exists an injective homomorphism $\Phi: GL_n(K)\longrightarrow Aut^{gr}(L_K(R_n))$ of groups such that $\Phi(P) = \phi_P$ for all $P\in GL_n(K)$.	
\end{cor}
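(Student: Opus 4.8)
The plan is to realize $GL_n(K)$ as a submonoid of $(GL_n(L_K(R_n)_0), \star)$ on which the twisted multiplication $\star$ collapses to the ordinary matrix product, and then transport the monoid isomorphism of Proposition \ref{gr-IsoofLA} (3). First I would embed $GL_n(K)$ into $GL_n(L_K(R_n)_0)$ by identifying a scalar matrix $P = (p_{i,j}) \in GL_n(K)$ with the matrix $(p_{i,j}v) \in M_n(L_K(R_n)_0)$; since $v = 1_{L_K(R_n)}$ and $K\cdot v \subseteq L_K(R_n)_0$, this is an injective group homomorphism onto a subgroup of the units of $M_n(L_K(R_n)_0)$, and for such $P$ the graded endomorphism $\phi_P$ of Proposition \ref{gr-IsoofLA} (1) is defined.

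The decisive observation is that $\phi_P$ fixes every scalar matrix. Indeed, $\phi_P$ is $K$-linear and $\phi_P(v)=v$, so for any $Q = (q_{i,j}v)$ with $Q \in GL_n(K)$ one has $\phi_P(Q) = (q_{i,j}\phi_P(v)) = (q_{i,j}v) = Q$. In particular $\phi_P(P) = P$, and therefore by Proposition \ref{gr-IsoofLA} (2) each $\phi_P$ with $P \in GL_n(K)$ is a graded automorphism of $L_K(R_n)$; thus $\Phi$ indeed lands in $Aut^{gr}(L_K(R_n))$.

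Next I would compute the twisted product on $GL_n(K)$: for $P, Q \in GL_n(K)$ we have $P \star Q = P\phi_P(Q) = PQ$, the ordinary matrix product, precisely because $\phi_P(Q) = Q$. Hence the inclusion $GL_n(K) \hookrightarrow (GL_n(L_K(R_n)_0), \star)$ is a homomorphism of groups. Composing with the monoid isomorphism $\Phi$ of Proposition \ref{gr-IsoofLA} (3) yields $\phi_{PQ} = \phi_{P \star Q} = \phi_P\phi_Q$ for all $P, Q \in GL_n(K)$, so $\Phi\colon GL_n(K) \longrightarrow Aut^{gr}(L_K(R_n))$ is a group homomorphism. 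Injectivity is immediate from Theorem \ref{Iso} (2), which gives $\phi_P = \phi_Q$ if and only if $P = Q$.

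I do not expect a serious obstacle here: the only point requiring care is the verification that $\phi_P$ acts as the identity on scalar matrices, since this is exactly what converts the \emph{a priori} twisted multiplication $\star$ into the ordinary matrix multiplication and lets the monoid isomorphism of Proposition \ref{gr-IsoofLA} (3) restrict to an honest group homomorphism. Everything else is a direct appeal to the already-established statements.
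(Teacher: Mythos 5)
Your proposal is correct and follows essentially the same route as the paper: restrict the monoid isomorphism of Proposition \ref{gr-IsoofLA} (3) to $GL_n(K)$, observe that $\phi_P(Q)=Q$ for scalar matrices (so $\star$ reduces to ordinary matrix multiplication), and use $\phi_P(P)=P$ together with Proposition \ref{gr-IsoofLA} (2) to land in $Aut^{gr}(L_K(R_n))$. Your explicit verification that $\phi_P$ fixes scalar matrices via $K$-linearity and $\phi_P(v)=v$ is exactly the (unstated) justification behind the paper's claim.
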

\begin{proof}
By proposition \ref{gr-IsoofLA} (3), 	the map $\Phi: (GL_n(L_K(R_n)_0), \star)\longrightarrow End^{gr}(L_K(R_n))$, defined by $P\longmapsto \phi_P$, is a monoid isomorphism, where the multiplication law ``$\star$" is defined by $$P\star Q = P\phi_P(Q)$$ for all $P, Q\in GL_n(L_K(R_n)_0)$. For all $P$ and $Q\in GL_n(K)$, since $\phi_P(Q) = Q$, we must have $P\star Q = PQ$, so $GL_n(K)$ is a subgroup of the group of units of the monoid $(GL_n(L_K(R_n)_0), \star)$. Moreover, since $\phi_P(P) = P$ for all $P\in GL_n(K)$, and by Proposition \ref{gr-IsoofLA}, $\phi_P\in Aut^{gr}(L_K(R_n))$ for all $P\in GL_n(K)$. From these observations, we obtain that $\Phi|_{GL_n(K)}: GL_n(K)\longrightarrow Aut^{gr}(L_K(R_n))$ is an injective homomorphism of groups, thus finishing the proof.
\end{proof}

\noindent In \cite[Corollary 2.8]{kn:ataanirolpa} Kuroda and the first author introduced Anick type automorphisms of $L_K(R_n)$. We reproduce here these automorphisms. Namely, 
for any integer $n\ge 2$ and any field $K$, we denote by $A_{R_n}(e_1, e_2)$ the $K$-subalgebra of $L_K(R_n)$ generated by $$v, e_1, e_3, \hdots , e_n, e^*_2, \hdots , e^*_n.$$	

We should note that by \cite[Theorem 1]{aajz:lpaofgkd} (see, also \cite[Theorem 3.7]{ln:othdolpawcicr}), the following elements form a basis of the $K$-algebra $A_{R_n}(e_1, e_2)$: (1) $v$, (2) $p= e_{k_1}\cdots e_{k_m}$, where $k_i \in \{1, 3, \hdots , n\}$, (3) $q^* = e^*_{t_1}\cdots  e^*_{t_h}$, where $t_i \in \{2, 3, \hdots , n\}$,
(4) $pq^*$, where $p$ and $q^*$ are defined as in items (2) and (3), respectively. 

For any $p\in A_{R_n}(e_1, e_2)$, let 
$$U_p=\begin{pmatrix}
1 & p & 0 & \dots & 0 \\
0 & 1 & 0 & \dots & 0 \\
\vdots & \vdots &\vdots &\vdots &\vdots \\
0 & 0 & 0 & \dots & 1 \\
\end{pmatrix}.$$ We then have $U_p\in GL_n(L_K(R_n))$ with $U^{-1}_p = U_{-p}$ and $$U_pU_q = U_{p+q}$$ for all $p, q\in A_{R_n}(e_1, e_2)$. Also, for any $p\in A_{R_n}(e_1, e_2)$, by Theorem \ref{Iso}, we obtain the endomorphism $\phi_{U_p}$ of $L_K(R_n)$ defined by: $v\longmapsto v$, $e_i\longmapsto e_i$ for all $i\in \{1, 3, \ldots, n\}$, $e^*_j\longmapsto e^*_j$ for all $2\le j\le n$, $e_2\longmapsto e_2 + e_1p$ and $e^*_1\longmapsto e^*_1 - pe^*_2$. We note that $\phi_{U_p}(q) = q$ for all $q\in A_{R_n}(e_1, e_2)$, and so $\phi_{U_p}(U_q) = U_q$ for all $q\in A_{R_n}(e_1, e_2)$. By Theorem \ref{Iso}, $\phi_{U_p}$ is an automorphism and $\phi^m_{U_p} = \phi_{{U_p}_m}$ for all $p\in A_{R_n}(e_1, e_2)$ and $m\in \mathbb{Z}$. Moreover, if $p\in A_{R_n}(e_1, e_2)\cap L_K(R_n)_0$, then $\phi_{U_p}$ is a graded automorphism by Proposition \ref{gr-IsoofLA}. From these observations, we have the following interesting note.

\begin{cor}\label{Anick-IsoofLA}
Let $n\ge 2$ be a positive integer, $K$ a field and $R_n$ the rose graph with $n$ petals. Then, there is an injective homomorphism $\Phi: (A_{R_n}(e_1, e_2), +)\longrightarrow Aut(L_K(R_n))$ of groups such that $\Phi(p) = \phi_{U_p}$ for all $p\in A_{R_n}(e_1, e_2)$, and $$\Phi|_{A_{R_n}(e_1, e_2)\cap L_K(R_n)_0}: (A_{R_n}(e_1, e_2)\cap L_K(R_n)_0, +)\longrightarrow Aut^{gr}(L_K(R_n))$$ is an injective homomorphism of groups
\end{cor}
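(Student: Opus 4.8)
The plan is to verify that the map $\Phi\colon (A_{R_n}(e_1,e_2),+)\longrightarrow \mathrm{Aut}(L_K(R_n))$ given by $p\longmapsto \phi_{U_p}$ is a well-defined injective group homomorphism, and then to check that its restriction to the degree-zero part lands in $\mathrm{Aut}^{gr}(L_K(R_n))$. Most of the ingredients have already been assembled in the discussion immediately preceding the statement, so the proof is mainly a matter of stringing them together cleanly. First I would record that $U_p \in GL_n(L_K(R_n))$ with $U_p^{-1} = U_{-p}$, which has already been observed, and that each $\phi_{U_p}$ is genuinely an \emph{automorphism}: this follows from Theorem~\ref{Iso}(4), since $\phi_{U_p}(U_p) = U_p$ (because the entries of $U_p$, namely $v,p,0,1$, all lie in $A_{R_n}(e_1,e_2)$ and are fixed by $\phi_{U_p}$), which is exactly the condition $\phi_P(P)=P$ guaranteeing invertibility.

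Next I would establish that $\Phi$ is a homomorphism from the additive group to $\mathrm{Aut}(L_K(R_n))$. The key computation is
\[
\phi_{U_p}\,\phi_{U_q} \;=\; \phi_{\,U_p\,\phi_{U_p}(U_q)} \;=\; \phi_{\,U_p U_q} \;=\; \phi_{U_{p+q}},
\]
where the first equality is Theorem~\ref{Iso}(3), the second uses the already-noted fact that $\phi_{U_p}(U_q)=U_q$ for $q\in A_{R_n}(e_1,e_2)$ (since $\phi_{U_p}$ fixes every element of the subalgebra $A_{R_n}(e_1,e_2)$), and the third is the identity $U_pU_q=U_{p+q}$. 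This shows $\Phi(p)\Phi(q)=\Phi(p+q)$, so $\Phi$ respects the group operations. Injectivity follows from Theorem~\ref{Iso}(2): if $\phi_{U_p}=\phi_{U_q}$ then $U_p=U_q$, whence $p=q$ by comparing the $(1,2)$-entries.

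For the graded statement, suppose $p\in A_{R_n}(e_1,e_2)\cap L_K(R_n)_0$. Then every entry of $U_p$ lies in $L_K(R_n)_0$, so $U_p\in GL_n(L_K(R_n)_0)$, and Proposition~\ref{gr-IsoofLA}(1) gives that $\phi_{U_p}$ is a graded homomorphism; since $\phi_{U_p}(U_p)=U_p$ with $U_p\in GL_n(L_K(R_n)_0)$, Proposition~\ref{gr-IsoofLA}(2) upgrades this to a graded \emph{automorphism}. The subset $A_{R_n}(e_1,e_2)\cap L_K(R_n)_0$ is clearly an additive subgroup, and the restriction of $\Phi$ remains a homomorphism into $\mathrm{Aut}^{gr}(L_K(R_n))$ by the same computation as above together with the observation that $U_{p+q}\in GL_n(L_K(R_n)_0)$ whenever $p,q$ are degree zero. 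Injectivity of the restriction is inherited from injectivity of $\Phi$.

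I do not anticipate a genuine obstacle here: essentially every step is a direct appeal to a result proved earlier in the excerpt, and the one identity that needs checking by hand, $\phi_{U_p}(U_q)=U_q$, reduces to the already-stated fact that $\phi_{U_p}$ fixes $A_{R_n}(e_1,e_2)$ pointwise. The only point deserving mild care is confirming that $\phi_{U_p}$ really does fix all of $A_{R_n}(e_1,e_2)$ and not merely its generators; this is where the explicit basis of $A_{R_n}(e_1,e_2)$ recorded above (the elements $v$, $p=e_{k_1}\cdots e_{k_m}$, $q^*=e^*_{t_1}\cdots e^*_{t_h}$, and products $pq^*$) is useful, since $\phi_{U_p}$ fixes each basis element by construction and hence fixes the whole subalgebra by linearity and multiplicativity.
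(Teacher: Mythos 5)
Your proposal is correct and follows essentially the same route as the paper: both rest on the facts that $\phi_{U_p}$ fixes $A_{R_n}(e_1,e_2)$ pointwise (hence $\phi_{U_p}(U_q)=U_q$), the computation $U_p\star U_q = U_p\phi_{U_p}(U_q)=U_pU_q=U_{p+q}$ from Theorem~\ref{Iso}(3), invertibility via the criterion $\phi_P(P)=P$ of Theorem~\ref{Iso}(4), and Proposition~\ref{gr-IsoofLA} for the graded case. The only cosmetic difference is that the paper packages the argument through the monoid isomorphism of Proposition~\ref{IsoofLA}(4), realizing $(A_{R_n}(e_1,e_2),+)$ as a subgroup of the units of $(GL_n(L_K(R_n)),\star)$, whereas you verify the homomorphism and injectivity properties of $p\longmapsto\phi_{U_p}$ directly; the underlying computations are identical.
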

\begin{proof}
By proposition \ref{IsoofLA} (4), 	the map $\Phi: (GL_n(L_K(R_n)), \star)\longrightarrow End(L_K(R_n))$, defined by $P\longmapsto \phi_{P}$, is a monoid isomorphism, where the multiplication law ``$\star$" is defined by $$P\star Q = P\phi_P(Q)$$ for all $P, Q\in GL_n(L_K(R_n))$. Since $\phi_{U_p}(U_q) = U_q$ for all  $p, q\in A_{R_n}(e_1, e_2)$, we have $$U_p\star U_q =U_p\phi_{U_p}(U_q)=U_pU_q=U_{p+q}$$ for all  $p, q\in A_{R_n}(e_1, e_2)$. This implies that the map from $(A_{R_n}(e_1, e_2), +)$ to the group of units of the monoid $(GL_n(L_K(R_n)), \star)$, defined by $p\longmapsto U_p$, is an injective homomorphism of groups. Hence, the group $(A_{R_n}(e_1, e_2), +)$ may be viewed as a subgroup of the group of units of the monoid $(GL_n(L_K(R_n)), \star)$, and so $$\Phi|_{A_{R_n}(e_1, e_2)}: (A_{R_n}(e_1, e_2), +)\longrightarrow Aut(L_K(R_n))$$ is an injective homomorphism of groups satisfying the desired statements, thus finishing the proof.
\end{proof}

Next, we give a complete description of all automorphisms of $L_K(R_n)$ via the group of units $U(L_K(R_n))$ of $L_K(R_n)$. To do so, we need the following useful remark.

\begin{rem}\label{Unit-Ggrg}
Let $n\ge 2$ be a positive integer, $K$ a field and $R_n$ the rose graph with $n$ petals. Then, since $L_K(R_n)\cong L_K(R_n)^n$ as left $L_K(R_n)$-modules, we immediately obtain that $L_K(R_n)\cong M_n(L_K(R_n))$ as $K$-algebras. The isomorphism and its inverse are, respectively, easy to write down explicitly:
\begin{center}
$s\longmapsto (e^*_ise_j)$\quad and\quad $M = (m_{i,j})\longmapsto \sum_{1\le i, j\le n}e_im_{i,j}e^*_j.$	
\end{center}	
\end{rem}

Using Theorem \ref{Iso}, Proposition \ref{IsoofLA} and Remark \ref{Unit-Ggrg}, we obtain the following interesting corollary, which was studied by Cuntz in \cite{cuntz:aocaCa}.  

\begin{cor}\label{Cuntz-IsoofLA}
Let $n\ge 2$ be a positive integer, $K$ a field, $R_n$ the rose graph with $n$ petals, $U(L_K(R_n))$ the group of units of $L_K(R_n)$, and $u$ an element of $U(L_K(R_n))$. Then the following statements hold:

$(1)$ The map $f_u: L_K(R_n)\longrightarrow L_K(R_n)$, defined by $v\longmapsto v$, $e_i\longmapsto ue_i$ and $e^*_i\longmapsto u^{-1}e^*_i$ for all $1\le i\le n$, is an injective homomorphism of $K$-algebras, and $f_u = \phi_P$, where $P = (e^*_iue_j)\in GL_n(L_K(R_n))$ and $\phi_P$ is the endomorphism of $L_K(R_n)$ introduced in Proposition \ref{IsoofLA}.

$(2)$ For every $\lambda \in End(L_K(R_n))$, $\lambda =f_x$, where $x = \sum_{i=1}^n\lambda(e_i)e^*_i\in U(L_K(R_n))$. In particular, if $\tau_u$ is the inner automorphism of $L_K(R_n)$ generated by $u$, then $\tau_u = f_x$, where $x = u^{-1}(\sum^n_{i=1}e_iue^*_i)$.

$(3)$ $f_u f_w = f_{f_u(w)u}$ for all $w\in U(L_K(R_n))$.

$(4)$ $f_u\in Aut(L_K(R_n))$ if and only if $u^{-1} = f_u(w)$ for some $w\in U(L_K(R_n))$. In this case, $f_u^{-1} = f_{w}$. Consequently, $$Aut^{gr}(L_K(R_n)) = \{f_u \in Aut(L_K(R_n)) \mid u\in U(L_K(R_n)_0)\}.$$

$(5)$ The map $\Phi: (U(L_K(R_n)), \star)\longrightarrow End(L_K(R_n))$, $u\longmapsto f_u$, is a monoid isomorphism, where the multiplication law ``$\star$" is defined by $$u\star w = f_u(w)u$$ for all $u, w\in U(L_K(R_n))$.

$(6)$ Let $Inn(L_K(R_n))$ be the inner automorphism group of $L_K(R_n)$. Then the canonical homomorphism $\mathcal{T}: U(L_K(R_n))\longrightarrow Inn(L_K(R_n))$, $u\longmapsto \tau_u$, is surjective with $\ker(\mathcal{T}) = K\cdot 1_{L_K(R_n)}$. Consequently, $Z(U(L_K(R_n))) = K\cdot 1_{L_K(R_n)}$.
\end{cor}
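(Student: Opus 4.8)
The plan is to prove the six statements of Corollary~\ref{Cuntz-IsoofLA} by systematically translating the abstract matrix description from Proposition~\ref{IsoofLA} into the language of units via the algebra isomorphism $L_K(R_n)\cong M_n(L_K(R_n))$ recorded in Remark~\ref{Unit-Ggrg}. The central observation is that the endomorphism $f_u$ defined by $e_i\mapsto ue_i$ is exactly $\phi_P$ where $P=(e^*_iue_j)$, so each claim about $f_u$ should reduce to the corresponding claim already established for $\phi_P$ in Proposition~\ref{IsoofLA}. First I would prove item (1): to see that $f_u=\phi_P$ it suffices to check they agree on the generators $e_i$, which means verifying $\sum^n_{k=1}e_k(e^*_kue_i)=ue_i$; this is immediate from $\sum^n_{k=1}e_ke^*_k=v=1$. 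I would also record that $P=(e^*_iue_j)$ is indeed invertible with inverse $(e^*_iu^{-1}e_j)$, using the same relation, which shows $P\in GL_n(L_K(R_n))$ and legitimizes applying Proposition~\ref{IsoofLA}.

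Next I would handle item (5) early, since the monoid isomorphism is the structural backbone from which most other items follow. The map $P\mapsto u$ sending a matrix to the unit $\sum_{i,j}e_iPe^*_j$—or rather the composite of $\Phi$ from Proposition~\ref{IsoofLA}~(4) with the isomorphism $M_n(L_K(R_n))\cong L_K(R_n)$ of Remark~\ref{Unit-Ggrg}—should carry the product $\star$ on $GL_n$ to a product $\star$ on $U(L_K(R_n))$. The key computation is to identify this transported product explicitly as $u\star w=f_u(w)u$. To do this I would unwind $P\star Q=P\phi_P(Q)$ under the correspondence $P\leftrightarrow u$, $Q\leftrightarrow w$, using that the isomorphism $L_K(R_n)\cong M_n(L_K(R_n))$ sends $s\mapsto(e^*_ise_j)$, and check that $\phi_P(Q)$ corresponds to $f_u(w)$ while matrix multiplication corresponds to multiplication in $L_K(R_n)$ in the appropriate order. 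Once item (5) is in place, item (3) is just the statement $u\star w=f_u(w)u$ rewritten as $f_uf_w=f_{f_u(w)u}$ via the isomorphism, and item (4)'s invertibility criterion transports directly from Proposition~\ref{IsoofLA}~(3): $\phi_P$ is an automorphism iff $P^{-1}=\phi_P(Q)$, which under the correspondence becomes $u^{-1}=f_u(w)$.

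For item (2) I would argue that every endomorphism $\lambda$ fixing $v$ arises as some $f_x$: by Proposition~\ref{IsoofLA}~(1) and (4) every endomorphism equals $\phi_P$ for a unique $P\in GL_n(L_K(R_n))$, and setting $x=\sum_i\lambda(e_i)e^*_i$ one checks $x$ is the unit corresponding to $P$ and that $f_x=\lambda$; the formula $x=\sum_i\lambda(e_i)e^*_i$ should follow from inverting the correspondence $x\leftrightarrow(e^*_ixe_j)=(e^*_i\lambda(e_j))$. The inner-automorphism computation $\tau_u=f_x$ with $x=u^{-1}(\sum_ie_iue^*_i)$ would then be a direct substitution: I compute $\tau_u(e_i)=ue_iu^{-1}$ and verify this equals $xe_i$ for the stated $x$, again using $\sum_ie_ie^*_i=v$. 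The graded refinement in item (4), namely $Aut^{gr}(L_K(R_n))=\{f_u\mid u\in U(L_K(R_n)_0)\}$, should follow by combining item (1) with Proposition~\ref{gr-IsoofLA}: $f_u=\phi_P$ is graded exactly when $P\in GL_n(L_K(R_n)_0)$, and $P=(e^*_iue_j)$ has entries in degree $0$ precisely when $u$ is homogeneous of degree $0$.

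The main obstacle I anticipate is item (6), computing the kernel of $\mathcal{T}\colon u\mapsto\tau_u$ and concluding $Z(U(L_K(R_n)))=K\cdot 1$. Surjectivity of $\mathcal{T}$ onto $Inn(L_K(R_n))$ is definitional, so the real content is showing $\ker(\mathcal{T})=\{u\mid u\text{ is central}\}=K\cdot 1$, i.e. that the center of $L_K(R_n)$ meets the units exactly in the nonzero scalars. The hard part is establishing that the center $Z(L_K(R_n))$ is just $K\cdot 1$; I would argue this from the $\mathbb{Z}$-grading, observing that a central element must commute with every $e_i$ and $e^*_i$, then using the grading to reduce to the degree-zero part and exploiting that $\sum_ie_ie^*_i=v$ together with $e^*_ie_j=\delta_{i,j}v$ forces any central element to be a scalar multiple of $v$ (for $n\ge 2$ the two distinct petals rigidify the computation and rule out nontrivial central elements, a phenomenon that fails for $n=1$ where $L_K(R_1)\cong K[x,x^{-1}]$ is commutative). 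Once $Z(L_K(R_n))=K\cdot 1$ is known, $\ker(\mathcal{T})=K^\times\cdot 1$ follows and the final identification $Z(U(L_K(R_n)))=K\cdot 1$ is a standard consequence, since a central unit is in particular a central element of the algebra.
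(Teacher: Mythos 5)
Your proposal follows the paper's own route for items (1)--(5): the same identification $f_u=\phi_P$ with $P=(e^*_iue_j)$ via Remark~\ref{Unit-Ggrg}, and the same transport of the $\star$-monoid structure of Proposition~\ref{IsoofLA} to $U(L_K(R_n))$; the only difference is organizational (you prove (5) first and read off (3) and (4), while the paper proves (3) and (4) directly and then gets (5) for free), and your graded refinement of (4) via Proposition~\ref{gr-IsoofLA} is exactly the paper's intent. Two small repairs are needed along the way. In (2), the stated unit $x=u^{-1}\bigl(\sum^n_{i=1}e_iue^*_i\bigr)$ matches the convention $\tau_u(a)=u^{-1}au$: indeed $\bigl(\sum_k e_kue^*_k\bigr)e_j=e_ju$, so $xe_j=u^{-1}e_ju$. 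With your convention $\tau_u(e_i)=ue_iu^{-1}$ the verification you describe fails, and you would land on $x=u\bigl(\sum_i e_iu^{-1}e^*_i\bigr)$ instead; this is a trivial swap $u\leftrightarrow u^{-1}$, but as written the check does not close. In (1), checking agreement only on the $e_i$ is legitimate, but deserves a word: if two endomorphisms agree on all $e_i$, then the relations $\phi(e^*_i)\phi(e_j)=\delta_{i,j}v$ and $\sum_j\phi(e_j)\phi(e^*_j)=v$ force agreement on the $e^*_i$ as well (the ``inverse row'' is unique); alternatively compute directly $\phi_P(e^*_i)=e^*_iu^{-1}$, as the paper does -- note the multiplication is on the right, so the corollary's displayed $e^*_i\mapsto u^{-1}e^*_i$ is a typo that your blind write-up inherited.

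The one genuine gap is the last step of (6). You conclude $Z(U(L_K(R_n)))=K\cdot 1$ ``since a central unit is in particular a central element of the algebra.'' That sentence only yields the trivial inclusion $K^{\times}\cdot 1\subseteq Z(U(L_K(R_n)))$; the nontrivial direction is that a unit commuting with all \emph{units} is central in the whole algebra, and this is false in general rings: in the upper triangular matrices $T_2(\mathbb{F}_2)$ the unit group is abelian, so $Z(U)=U$, yet $U\not\subseteq Z(T_2(\mathbb{F}_2))$. You therefore need an argument specific to $L_K(R_n)$, for instance: every element of $L_K(R_n)\cong M_n(L_K(R_n))$ ($n\ge 2$) is a finite sum of units; or, directly, $v+pq^*$ is a unit whenever $q^*p=0$, and commutation with the units $v+e_ie_je^*_j$ ($j\neq i$) and $v+e_ie_ie^*_k$, $v+e_ke^*_i$ ($k\neq i$) recovers $ue_i=e_iu$ via $e_i=\sum_j e_ie_je^*_j$, after which your grading computation (which is the same one the paper runs from $u=(e^*_i)^mue^m_i$, with the two distinct petals for $n\ge 2$ killing all nonzero-degree components) applies. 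To be fair, the paper itself passes over this point with a bare ``Consequently,'' so your plan is no weaker than the printed proof here; but the justification you offer is stated in the wrong direction and must be replaced by something like the above.
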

\begin{proof}
(1)	Since $u\in U(L_K(R_n))$ and by Remark \ref{Unit-Ggrg}, $P := (e^*_iue_j) \in GL_n(L_K(R_n))$ with $P^{-1} = (e^*_iu^{-1}e_j)$. By Proposition \ref{IsoofLA} (1), $\phi_P$ is an injective $K$-algebra endomorphism of $L_K(R_n)$. Also, we have $\phi_P(e_i) = \sum^n_{k=1}e_ke^*_kue_i = ue_i=f_u(e_i)$ and $\phi_P(e^*_i) = \sum^n_{k=1}e^*_iu^{-1}e_ke^*_k = e^*_iu^{-1}=f_u(e^*_i)$ for all $1\le i\le n$, and so $f_u = \phi_P$ and consequently, $f_u$ is an injective $K$-algebra endomorphism of $L_K(R_n)$.

(2) Let $\lambda\in End(L_K(R_n))$. By Theorem \ref{Iso} (5), $\lambda = \phi_P$, where $P = (e^*_i\lambda(e_j))\in GL_n(L_K(R_n))$. On the other hand, by Item (1) and Remark \ref{Unit-Ggrg}, $\phi_P = f_x$, where $x = \sum_{1\le i, j\le n}e_ie^*_i\lambda(e_j)e^*_j = \sum^n_{i=1}\lambda(e_i)e^*_i\in U(L_K(R_n))$. Therefore, $\lambda =f_x$, as desired.

(3) Let $w\in U(L_K(R_n))$. We then have $f_uf_w = \phi_P\phi_Q= \phi_{P\phi_P(Q)}$, where $P = (e^*_iue_j)$ and $Q = (e^*_iwe_j)\in GL_n(L_K(R_n))$. Also, $P\phi_P(Q) = Pf_u(Q) = (e^*_if_u(w)ue_j)$ and $\sum_{1\le i, j\le n}e_ie^*_if_u(w)ue_je^*_j = f_u(w)u$. By Item (1) and Remark \ref{Unit-Ggrg}, $\phi_{P\phi_P(Q)} = f_{f_u(w)u}$, and so $f_uf_w = f_{f_u(w)u}$, as desired.

(4) We have that $f_u\in Aut(L_K(R_n))$ if and only if $\phi_P\in Aut(L_K(R_n))$, where $P = (e^*_iue_j)\in GL_n(L_K(R_n))$, if and only if $P^{-1} = (e^*_iu^{-1}e_j) = \phi_P(Q)= f_u(Q)$ for some $Q= (q_{i, j})\in GL_n(L_K(R_n))$, if and only if $Q = (f^{-1}_u(e^*_iu^{-1}e_j)) = (e^*_iwe_j)$, where $w = f^{-1}_u(u^{-1})\in U(L_K(R_n))$ (since $f_u$ is always injective). In this case, by Proposition \ref{IsoofLA} (3) and Item (1), $f^{-1}_u = \phi^{-1}_P = \phi_Q = f_w$. From these observations, we immediately obtain that $f_u\in Aut(L_K(R_n))$ if and only if $u^{-1} = f_u(w)$ for some $w\in U(L_K(R_n))$.

(5) It immediately follows from Items (1), (2), (3) and Proposition \ref{IsoofLA} (4).

(6) Let $\tau_u$ be the inner automorphism of $L_K(R_n)$ generated by $u$, for which $\tau_u = id_{L_K(R_n)}$. We then have $u^{-1}e_iu = e_i$ for all $1\le i \le n$. Equivalently, $u = e^*_iue_i$ for all $1\le i \le n$. This implies that $u = (e^*_i)^mu e^m_i$ for all $1\le i \le n$ and for all $m\ge 1$. Since $L_K(R_n)$ is a $\mathbb{Z}$-graded algebra, $u$ may be written in the form: $u = \sum^t_{j= -l}u_j$, where $l, t\ge 0$ and $u_j\in L_K(R_n)_j$ for all $-l \le j\le t$. We then have $$\sum^t_{j= -l}u_j =u= \sum^t_{j= -l}(e^*_i)^m u_j e^m_i$$ for all $1\le i\le n$ and $m\ge 0$. By $\mathbb{Z}$-grading in $L_K(R_n)$, we must have $u_j = (e^*_i)^m u_j e^m_i$ for all $-l \le j\le t$, $1\le i\le n$ and $m\ge 1$. For every $-l\le j\le t$, we write $u_j = \sum^h_{k=1}a_k\alpha_k\beta^*_k$, where $h\ge 0$, $a_k\in K$ and $\alpha_k, \beta_k\in (R_n)^*$ with $|\alpha_k|- |\beta_k| = j$. Let $m:= \max\{|\alpha_k|\mid 1\le k\le h\}$. For each $1\le i\le n$, we obtain that $$u_j = (e^*_i)^m u_j e^m_i = (e^*_i)^m (\sum^h_{k=1}a_k\alpha_k\beta^*_k) e^m_i = \sum^h_{k=1}a_k(e^*_i)^m\alpha_k\beta^*_ke^m_i = b_i e_i^j$$
for some $b_i\in K$, where $e^0_i =v$ and $e^j_i = (e^*_i)^{-j}$ if $j<0$. By $\mathbb{Z}$-grading in $L_K(R_n)$, $u_j = 0$ for all $j\neq 0$, and so $u = k v$ for some $k\in K\setminus\{0\}$. Therefore, we have $\ker(\mathcal{T}) = K\cdot 1_{L_K(R_n)}$ and $Z(U(L_K(R_n))) = K\cdot 1_{L_K(R_n)}$, thus finishing the proof.
\end{proof}

It is worth mentioning the following note.

\begin{rem}\label{rem-auto}
(1) We should note that Propositions \ref{IsoofLA} and  \ref{gr-IsoofLA} immediately follow as a consequence of Corollary \ref{Cuntz-IsoofLA} and Remark \ref{Unit-Ggrg}.

(2) Let $K$ be an arbitrary field,  $n\ge 2$ a positive integer,  and $R_n$ the rose with $n$ petals. Let $u\in U(L_K(R_n))$ and $P = (e^*_iue_j)\in GL_n(L_K(R_n))$. Let $f_u$ be the automorphism of $L_K(R_n)$ introduced in Corollary \ref{Cuntz-IsoofLA}. Then 
\begin{center}
$f_u(P) = P$ if and only if $f_u(u) = u$. 	
\end{center}
Indeed,  assume that $f_u(P)=P$, i.e., $f_u(e^*_iue_j)=e^*_iue_j$ for all $1\leq i,j\leq n$. This follows that $e^*_iu^{-1}f_u(u)ue_j=e^*_iue_j$ and 
$e_ie^*_iu^{-1}f_u(u)ue_je_j^*=e_ie^*_iue_je_j^*$
for all $1\leq i,j\leq n$. Hence, $$\sum_{i=1}^{n}\sum_{j=1}^n e_ie^*_iu^{-1}f_u(u)ue_je_j^*=\sum_{i=1}^{n}\sum_{j=1}^n e_ie^*_iue_je_j^*.$$ Equivalently, 
$$\left(\sum_{i=1}^{n}e_ie^*_i\right)u^{-1}f_u(u)u \left(\sum_{j=1}^n e_je_j^*\right) = \left(\sum_{i=1}^{n}e_ie^*_i\right) u \left(\sum_{j=1}^n e_je_j^*\right),$$
so $u^{-1}f_u(u)u=u$ and $f_u(u)=uuu^{-1}=u$.	 The converse is obvious.
\end{rem}

We close this section with the following example.

\begin{exas}\label{Example-auto}
Let $K$ be a field and $R_2$ the rose graph with $2$ petals.

(1) $P=\begin{pmatrix}
0 & v \\
v & 0 \\
\end{pmatrix}\in M_2(L_K(R_2))$. It is obvious that $P$ is an invertible matrix with $P^{-1} = P$. Let $x = e_1e^*_2 + e_2e^*_1\in L_K(R_n)_0$. Since $x$ is the image of $P$ under the displayed isomorphism given in Remark \ref{Unit-Ggrg}, $x$ is automatically a unit of $L_K(R_2)_0$ with $x^{-1} = x$.	Then, by Corollary \ref{Cuntz-IsoofLA} (1), we have the graded endomorphism $f_x$ of $L_K(R_2)$ such that $f_x(v) =v$, $f_x(e_1) = xe_1 = e_2$, $f_x(e_2) = xe_2 = e_1$, $f_x(e_1^*)=e_1^*x^{-1}=e_2^*$ and $f_x(e_2^*)=e_2^*x^{-1}=e_1^*$. It is obvious that $f_x(P) =P$, and so $f_x(x) = x$ (by Remark \ref{rem-auto} (2)). This implies that $f_x(x^{-1}) = x^{-1}$, and so $f_x$ is an automorphism of $L_K(R_2)$ by Corollary \ref{Cuntz-IsoofLA} (4).

(2) Let $A_{R_2}(e_1, e_2)$ is the $K$-subalgebra of $L_K(R_2)$ generated by $v, \, e_1, \, e^*_2$, that means, $$A_{R_2}(e_1, e_2) = \{\sum^n_{i =1}r_i e^{m_i}_1 (e^*_2)^{l_i}\mid n\ge 1,\, r_i \in K,\, m_i, l_i\ge 0 \},$$ where $e^0_1 = v = (e^*_2)^0$. Let $p := e_1e_2^*\in A_{R_2}(e_1, e_2)$ and $U_p=\begin{pmatrix}
v & p \\
0 & v \\
\end{pmatrix}\in M_2(L_K(R_2))$. As introduced prior to Corollary \ref{Anick-IsoofLA}, $U_p$ is invertible with $(U_p)^{-1} = U_{-p}$. Let $y = v + e_1^2(e^*_2)^2\in L_K(R_2)_0$. Since $y$ is the image of $U_p$ under the displayed isomorphism given in Remark \ref{Unit-Ggrg}, $y$ is automatically a unit of $L_K(R_2)_0$ with $y^{-1} = v-e_1^2(e^*_2)^2$. Then, by Corollary \ref{Cuntz-IsoofLA} (1), we have the graded endomorphism $f_y$ of $L_K(R_2)$ such that $f_y(v) =v$, $f_y(e_1) = ye_1 = e_1$, $f_y(e_2) = ye_2 = e_2 + e_1^2e^*_2$, $f_y(e_1^*)=e_1^*y^{-1}=e_1^* -e_1(e^*_2)^2$ and $f_y(e_2^*)=e_2^*y^{-1}=e_2^*$. By Corollary \ref{Cuntz-IsoofLA} (1), we have $f_y = \phi_{U_p}$, where $\phi_{U_p}$ is the automorphism of $L_K(R_2)$ introduced prior to Corollary \ref{Anick-IsoofLA}. We note that $f_y(q)= \phi_{U_p}(q) =q$ for all $q\in A_{R_2}(e_1, e_2)$, and so $f_y(y) = y$. This implies that $f_y(y^{-1}) = y^{-1}$, and so $f_y$ is an automorphism of $L_K(R_2)$ by Corollary \ref{Cuntz-IsoofLA} (4).\medskip

(3) Let $u := (e_1e^*_2 + e_2e^*_1)( v + e_1^2(e^*_2)^2) = e_1e_2^*+e_2e_1^*+e_1^2e_2^*e_1^*$. We have that $u$ is a unit of $L_K(R_2)_0$ with $u^{-1} = (v - e_1^2(e^*_2)^2)(e_1e^*_2 + e_2e^*_1) = e_1e_2^*+e_2e_1^*-e_2e_1(e_2^*)^2$. By Corollary \ref{Cuntz-IsoofLA} (1), we have the graded endomorphism $f_u$ of $L_K(R_2)$ such that $f_u(v) =v$, $f_u(e_1) = ue_1 = e_2+e_1^2e_2^*$, $f_u(e_2) = ue_2 = e_1$ $f_u(e_1^*)=e_1^*u^{-1}=e_2^*$ and $f_u(e_2^*)=e_2^*u^{-1}=e_1^*-e_1(e_2^*)^2$. We then obtain that
\begin{align*}
f_u(e_1e_2^*+e_2e_1^*-e_2^2e_1^*e_2^*)&=(e_2+e_1^2e_2^*)(e_1^*-e_1(e_2^*)^2)+e_1e_2^*-e_1^2e_2^*(e_1^*-e_1(e_2^*)^2)\\
&=e_2e_1^*-e_2e_1(e_2^*)^2+e_1^2e_2^*e_1^*+e_1e_2^*-e_1^2e_2^*e_1^*\\
&=e_1e_2^*+e_2e_1^*-e_2e_1(e_2^*)^2=u^{-1}.
\end{align*} By Corollary \ref{Cuntz-IsoofLA} (4), $f_u$ is a graded automorphism of $L_K(R_2)$ with $f^{-1}_u = f_w$, where $w= e_1e_2^*+e_2e_1^*-e_2^2e_1^*e_2^*\in U(L_K(R_2))$ with $w^{-1} = e_1e_2^*+e_2e_1^*+e_1e_2(e_1^*)^2$.
	
We note that 
$f_u(u)=f_u(e_1e_2^*+e_2e_1^*+e_1^2e_2^*e_1^*)
=(e_2+e_1^2e_2^*)(e_1^*-e_1(e_2^*)^2)+e_1e_2^*+(e_2+e_1^2e_2^*)^2(e_1^*-e_1(e_2^*)^2)e_2^*=e_1e_2^*+e_2e_1^*+e_1^2e_2^*e_1^*+e_1^2e_1^*e_2^*-e_2e_1(e_2^*)^2+e_2^2e_1^*e_2^*-e_2^2e_1(e_2^*)^3+e_2e_1^2e_2^*e_1^*e_2^*-e_1^3(e_2^*)^3.$ 
Therefore, by the $\mathbb{Z}$-grading in $L_K(R_2)$, we must have $f_u(u) \neq u$.\medskip

The above examples show that the set $$\{u\in U(L_K(R_2))\mid f_u(u) = u\}$$ is not a subgroup of $U(L_K(R_2))$.
\end{exas}

\section{Application: Zhang twist of Leavitt path algebras}	

\noindent In this section we study Zhang twist of Leavitt path algebras. More precisely, we twist the multiplicative structure of Leavitt path algebras $L_K(E)$ over any graph $E$ with the help of graded automorphisms constructed in the previous section. 

\begin{defn}
Let $\sigma$ be a graded automorphism of Leavitt path algebra $L_K(E)$ over any arbitrary graph $E$. We know that $L_K(E)$ has a $\mathbb Z$-graded structure as $L_K(E)=\oplus_n L_n$. We twist the multiplicative structure of $\oplus_n L_n$ as $a\star b=a\sigma^n(b)$ for any $a\in L_n, b\in L_m$. The same underlying graded vector space $\oplus L_n$ with this new graded product $\star$ is called the Zhang twist of $L_K(E)$ and denoted as $L_K(E)^{\sigma}$. 
\end{defn}

\noindent In a rather surprising result we note that the Leavitt path algebra $L_K(E)$ of an arbitrary graph $E$ is always a subalgebra of the Zhang twist $L_K(E)^{\phi_P}$ by any graded automorphism $\phi_P$ introduced in Corollary \ref{gr-Iso}. 

\begin{prop}\label{ZhangTw-SubalLPA}
	Let $K$ be a field, $n$ a positive integer, $E$ a graph, and $v$ and $w$ vertices in $E$ (they may be the same). Let $e_1, e_2, \ldots, e_n$ be distinct edges in $E$ with $s(e_i) = v$ and $r(e_i) = w$ for all $1\le i\le n$.  Let $P=(p_{ij})$ and $Q=\left(q_{ij}\right)$ be elements of  $GL_n(wL_K(E)_0w)$ with $P\varphi_P(Q) =I_n$, $P^{-1}= (p^{(-1)}_{ij})$ and $Q^{-1}= (q^{(-1)}_{ij})$. Then, there exists a graded injective homomorphism $\theta_{P}: L_K(E)\longrightarrow L_K(E)^{\phi_P}$ of $K$-algebras satisfying 
	\begin{center}
		$\theta_{P}(u)=u,\quad \theta_{P}(e)=e\quad\text{ and }\quad \theta_{P}(f^*)=f^*$ \end{center} for all $u\in E^0$, $e\in E^1$ and $f\in E^1\setminus \{ e_1,\ldots ,e_n\} $, and 
	\begin{center}
		$\theta_{P}(e^*_i) = \sum^n_{k=1}q^{(-1)}_{ik}e^*_k$
	\end{center} for all $1\le i\le n$, where the graded automorphism $\phi_P$ is defined in Corollary \ref{gr-Iso}.
\end{prop}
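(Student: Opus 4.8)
The plan is to produce $\theta_P$ through the Universal Property of $L_K(E)$ and then to establish injectivity by a Reduction Theorem argument modelled on the proof of Theorem~\ref{Iso}(1). First I would record the structural consequence of the hypothesis that drives everything: since $P\phi_P(Q)=I_n$, Corollary~\ref{gr-Iso}(2) guarantees that $\phi_P$ is a graded automorphism (so that the twist $L_K(E)^{\phi_P}$ is even defined, using $\phi_P^{-1}=\phi_Q$ in negative degrees), and applying $\phi_P$ to $QQ^{-1}=I_n$ together with $\phi_P(Q)=P^{-1}$ yields $P^{-1}\phi_P(Q^{-1})=I_n$, i.e.
\[
\phi_P(Q^{-1})=P.
\]
This identity, together with $Q^{-1}Q=I_n$, is exactly what will make the Cuntz--Krieger relations survive the twist.

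Next I would verify that the prescribed images $a_u=u$, $b_e=e$, $c_{f^*}=f^*$ (for $f\notin\{e_1,\dots,e_n\}$) and $c_{e_i^*}=\sum_k q^{(-1)}_{ik}e_k^*$ satisfy relations (1)--(4) of Definition~\ref{LPA} with respect to the twisted product $\star$; the twist is associative because $\phi_P$ is a graded automorphism, so the Universal Property then delivers the desired $K$-algebra homomorphism $\theta_P\colon L_K(E)\to L_K(E)^{\phi_P}$. Relation (1) and the degree-$0$ and degree-$(\pm1)$ halves of (2) are immediate from $u\star x=ux$ and the fact that $\phi_P$ and $\phi_P^{-1}$ fix all vertices. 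For relation (3) the only nontrivial case is $e_i^*\star e_j$: since $c_{e_i^*}$ has degree $-1$,
\[
c_{e_i^*}\star e_j=\Big(\sum_k q^{(-1)}_{ik}e_k^*\Big)\phi_P^{-1}(e_j)=\sum_{k,l}q^{(-1)}_{ik}e_k^*e_l q_{lj}=\sum_k q^{(-1)}_{ik}q_{kj}=\delta_{i,j}w,
\]
using $\phi_P^{-1}(e_j)=\phi_Q(e_j)=\sum_l e_l q_{lj}$, $e_k^*e_l=\delta_{k,l}w$, and $Q^{-1}Q=I_n$.

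The main computation is relation (4) at a regular vertex, and this is where $\phi_P(Q^{-1})=P$ is essential. At the vertex $v$ (the only one whose edge set meets $\{e_1,\dots,e_n\}$), each $e_i$ has degree $1$, so
\[
\sum_{i=1}^n e_i\star c_{e_i^*}=\sum_{i=1}^n e_i\,\phi_P\!\Big(\sum_k q^{(-1)}_{ik}e_k^*\Big)=\sum_{i,l}e_i\Big[\sum_k\phi_P(q^{(-1)}_{ik})\,p^{(-1)}_{kl}\Big]e_l^*,
\]
and the bracket is the $(i,l)$-entry of $\phi_P(Q^{-1})P^{-1}=PP^{-1}=I_n$, hence $\delta_{i,l}w$; this collapses the sum to $\sum_i e_ie_i^*$, and adjoining the untouched terms $gg^*$ for $g\in s^{-1}(v)\setminus\{e_1,\dots,e_n\}$ recovers $v$. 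For regular vertices $x\neq v$ no $e_i$ occurs and the relation holds trivially. A final inspection of degrees (each $c_{e_i^*}$ is homogeneous of degree $-1$) shows $\theta_P$ is graded.

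For injectivity I would follow the strategy of Theorem~\ref{Iso}(1). If $0\neq x\in\ker\theta_P$, the Reduction Theorem produces $a,b\in L_K(E)$ with $axb$ equal either to a nonzero vertex $u$ or to a nonzero Laurent polynomial $p(c)$ in a cycle $c$ without exits; since $\theta_P$ is a $\star$-homomorphism, $\theta_P(axb)=\theta_P(a)\star\theta_P(x)\star\theta_P(b)=0$. The vertex case is immediate because $\theta_P(u)=u\neq0$. The cycle case is where I expect the genuine work: one must show $\theta_P(p(c))\neq0$. If no edge of $c$ lies in $\{e_1,\dots,e_n\}$, then $\theta_P$ fixes $c$ and $c^*$ (the intervening automorphisms $\phi_P^{k}$ act trivially on each edge of $c$), whence $\theta_P(p(c))=p(c)\neq0$; otherwise the absence of exits forces $n=1$ with a single occurrence of $e_1$, and, exactly as in Theorem~\ref{Iso}(1) via the isomorphism $wL_K(E)w\cong K[x,x^{-1}]$, one checks that $\theta_P$ carries the cycle subalgebra injectively, so $\theta_P(p(c))\neq0$. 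This cycle-without-exits subcase is the principal obstacle, but it reduces essentially verbatim to the computation already performed for $\phi_P$ in Theorem~\ref{Iso}(1), so I expect no new difficulty there.
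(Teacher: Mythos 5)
Your construction of $\theta_P$ coincides with the paper's: the same generating family ($Q_u=u$, $T_e=e$, $T_{e_i^*}=\sum_k q^{(-1)}_{ik}e_k^*$), the same verification of relations (1)--(4) of Definition~\ref{LPA} under the twisted product --- including the pivotal identity $\phi_P(Q^{-1})=P$, which the paper likewise extracts from $P\phi_P(Q)=I_n$, and the same collapse of $\sum_i e_i\star T_{e_i^*}$ via $\phi_P(Q^{-1})P^{-1}=PP^{-1}=I_n$ --- followed by the Universal Property and a degree check. Where you genuinely diverge is injectivity. The paper disposes of it in one line by citing Tomforde's Graded Uniqueness Theorem \cite[Theorem 4.8]{tomf:utaisflpa}: once $\theta_P$ is known to be graded and $\theta_P(u)=u\neq 0$ for every vertex $u$, injectivity is automatic; indeed this is the real payoff of recording gradedness before addressing injectivity. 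You instead rerun the Reduction Theorem argument of Theorem~\ref{Iso}(1), which is correct but longer: the kernel is an ideal of $L_K(E)$, the vertex case and the untouched-cycle case go exactly as you say, and the remaining case ($n=1$, a single occurrence of $e_1$ in the exit-free cycle $c$) does close up --- in fact more easily than in Theorem~\ref{Iso}(1), because here $p_{1,1}$ is a \emph{degree-zero} unit of $wL_K(E)w\cong K[x,x^{-1}]$ (graded with $\deg x=|\alpha|$), forcing $p_{1,1}=aw$ and $q_{1,1}=a^{-1}w$ for some $a\in K\setminus\{0\}$, so that $\theta_P$ multiplies each monomial $c^i$ and $(c^*)^i$ by a nonzero scalar and $\theta_P(p(c))\neq 0$. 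One caveat: your claim that this subcase reduces ``essentially verbatim'' to Theorem~\ref{Iso}(1) should be replaced by the short computation just indicated, since the $\star$-product of the edge images inserts iterated powers $\phi_P^{k}$ that the untwisted computation of $\phi_P(c)$ never saw --- harmless here precisely because the degree-zero hypothesis makes those powers act by scalars, but not literally the same calculation. In sum: your route is self-contained (no appeal to Tomforde) at the cost of a case analysis; the paper's route is shorter and leans on the graded machinery already in play.
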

\begin{proof}
	We first note that $\phi_{P}(u)=u, \phi_{P}(e)=e\text{ and } \phi_{P}(e^*)=e^*$  for all $u\in E^0$ and $e\in E^1\setminus \{ e_1,\ldots ,e_n\} $, and 
	\begin{center}
		$\phi_{P}(e_i) = \sum^n_{k=1}e_kp_{ki}$\quad	and \quad $\phi_{P}(e^*_i) = \sum^n_{k=1}p^{(-1)}_{ik}e^*_k$
	\end{center} for all $1\le i\le n$, and $\phi^{-1}_P = \phi_Q$. %We also have $\sum^n_{k=1}wp_{i,k}q_{k, j} = \delta_{i, j}w = \sum^n_{k=1}wq_{i,k}p_{k, j}$ for all $i, j$, where $\delta$ is the Kronecker delta.
	
	We define the  elements $\{Q_u \ | \ u\in E^0\}$ and $\{T_e, T_{e^*} \ | \ e\in E^1\}$ of $L_K(E)^{\phi_P}$  by setting $Q_u = u$, $T_e = e$	and 
	\begin{equation*}
	T_{e^*}=  \left\{
	\begin{array}{lcl}
	\sum^n_{k=1}q^{(-1)}_{ik}e^*_k&  & \text{if } e= e_i \text{ for some } 1\le i\le n\\
	e^*&  & \text{otherwise}.
	\end{array}%
	%\text{.}
	\right.
	\end{equation*}%
	\medskip
	
	\noindent
	We claim that $\{Q_u, T_e, T_{e^*}\mid u\in E^0, e\in E^1\}$ is a family in $L_K(E)^{\phi_P}$ satisfying the relations analogous to (1) - (4) in Definition~\ref{LPA}. Indeed, we have $Q_u\ast Q_{u'}=Q_u Q_{u'} = u u' = \delta_{u, u'}u = \delta_{u, u'} Q_u$ for all $u, u'\in E^0$, showing relation (1).
	
	For (2), we always have $Q_{s(e)}\ast T_e=Q_{s(e)}T_e = T_e = T_eT_{r(e)}= T_e\ast T_{r(e)}$ for all $e\in E^1$ and $T_{f^*}\ast Q_{s(f)}=T_{f^*} \phi^{-1}(Q_{s(f)}) = T_{f^*}Q_{s(f)} =T_{f^*} = Q_{r(f)}T_{f^*}=Q_{r(f)}\ast T_{f^*}$ for all $f\in E^1\setminus \{e_1,\ldots ,e_n\}$. For each $1\le i\le n$, since 
	$$ve_k=e_kw=e_k,\ \ we_k^*=e_k^*v=e_k^*,\ \  \text{and}\ \ wq^{(-1)}_{ik}=q^{(-1)}_{ik}$$ for all $k$, we have 
	\begin{align*}
	Q_w\ast T_{e^*_i}&=Q_wT_{e^*_i}=w\sum _{k=1}^nq^{(-1)}_{ik}e_k^*
	=\sum _{k=1}^nwq^{(-1)}_{ik}e_k^*
	=\sum _{k=1}^nq^{(-1)}_{ik}e_k^*=T_{e^*_i},\\
	T_{e^*_i}\ast Q_v&=T_{e^*_i} \phi^{-1}_P(Q_v)=T_{e^*_i}Q_v=\sum _{k=1}^nq^{(-1)}_{ik}e_k^*v
	=\sum _{k=1}^nq^{(-1)}_{ik}e_k^*=T_{e^*_i}. 
	\end{align*}
	
	For (3), we obtain that $T_{e^*}\ast T_f = e^* \phi^{-1}_P(f) =e^* \phi_Q(f)=e^*f= \delta_{e, f}r(e)$ for all $e, f\in E^1\setminus \{ e_1,\ldots ,e_n\}$. For each $f\in E^1\setminus \{ e_1,\ldots ,e_n\} $ and $1\le i\le n$, we have 
	$$T_{e^*_i}\ast T_{f}=T_{e^*_i}\phi^{-1}_P(T_{f})=T_{e^*_i}\phi_Q(T_{f})=\sum _{k=1}^nq_{ik}e_k^*f=0$$ and $$
	T_{f^*}\ast T_{e_i}=T_{f^*}\phi^{-1}_P(T_{e_i})=T_{f^*}\phi_Q(T_{e_i})=\sum _{k=1}^nf^*e_kp_{ki}=0,$$	
	since $e_k^*f=f^*e_k=0$. For $i,j\in \{ 1,\ldots ,n\}$, we have 
	\begin{align*}
	T_{e^*_i}\ast T_{e_j}&=T_{e^*_i}\phi^{-1}_P(T_{e_j})=T_{e^*_i}\phi_Q(T_{e_j})=\sum _{k=1}^n\sum _{l=1}^nq^{(-1)}_{ik}e_k^*e_lq_{lj}\\
	&=\sum _{k=1}^n\sum _{l=1}^nq^{(-1)}_{ik}\delta _{k,l}wp_{lj} 
	=\sum _{k=1}^nq^{(-1)}_{ik}p_{kj}=\delta_{i,j}w =\delta_{i,j}Q_w, 
	\end{align*}
	since $e_k^*e_l=\delta _{k,l}w$ and $wp_{lj}=p_{lj}$. 
	
	For (4), let $u$ be a regular vertex in $E$. If $u \neq v$, then $\sum_{e\in s^{-1}(u)}T_e\ast T_{e^*} = \sum_{e\in s^{-1}(u)}T_e\phi_P(T_{e^*}) = \sum_{e\in s^{-1}(u)}ee^* = u = Q_u$. Consider the case when $u=v$, that is, $v$ is a regular vertex. Write $$s^{-1}(v)=\{ e_1,\ldots ,e_n,e_{n+1},\ldots, e_m\}$$
	for some distinct $e_{n+1},\ldots ,e_m\in E^1$ with $n\le m<\infty $. We note that $T_{e_k}\ast T_{e^*_k} = T_{e_k}\phi_P(T_{e^*_k}) = e_ke^*_k$ for all $n+1\le k\le m$, and
	\begin{align*}
	T_{e_i}\ast T_{e^*_i}&=e_i \varphi_P(\sum_{k=1}^{n}q^{(-1)}_{ik}e_k^*)=e_i\sum_{k=1}^{n}\varphi_P(q^{(-1)}_{ik}) \varphi_P(e_k^*)\\
	&=e_i\sum_{k=1}^{n}p_{ik}(\sum_{t=1}^{n}p^{(-1)}_{kt}e_t^*) \quad \textnormal{(since $\varphi_P(Q^{-1})=P$)}\\
	&=e_i\sum_{t=1}^{n}(\sum_{k=1}^{n}p_{ik}p^{(-1)}_{kt})e_t^*
	=e_i (\sum_{k=1}^{n}p_{ik}p^{(-1)}_{ki}) e_i^*=e_iwe_i^*\\
	&=e_ie_i^* \quad \textnormal{(since $e_i = e_iw$)}
	\end{align*}
	for all $1\le i\le n$, and so, we have 
	$$\sum _{e\in s^{-1}(v)}T_e\ast T_{e^*}	=\sum _{i=1}^mT_{e_i}\ast T_{e^*_i}=
	\sum _{i=1}^me_ie_i^*=v=Q_v,$$ thus showing the claim. Then, by the Universal Property of $L_K(E)$, there exists a $K$-algebra homomorphism $\theta_P: L_K(E)\longrightarrow L_K(E)^{\phi_P}$, which maps $u\longmapsto Q_u$, $e\longmapsto T_e$ and $e^*\longmapsto T_{e^*}$.  It is obvious that $Q_u$ and $T_e$ have degree $0$ and $1$ respectively for all $u\in E^0$ and $e\in E^1$. Since $q^{(-1)}_{ij}\in L_K(E)_0$ for all $1\le i, j\le n$, $T_{e^*}$ has degree $-1$ for all $e\in E^1$. This implies that $\phi_P$ is a $\mathbb{Z}$-graded homomorphism,  whence the injectivity of $\theta_P$ is guaranteed by \cite[Theorem 4.8]{tomf:utaisflpa}, thus finishing the proof.
\end{proof}

\noindent The remainder of this section is to investigate Zhang twists $L_K(R_n)^{\lambda}$ of Leavitt path algebras $L_K(R_n)$ by their graded automorphisms $\lambda$ where $R_n$ is the rose graph with $n$ petals. We first note that for any $\lambda \in Aut^{gr}(L_K(R_n))$, by Corollary \ref{gr-IsoofLA}, there exists a unique pair $(P, Q)$  consisting of elements $P$ and $Q$ of $GL_n(L_K(R_n)_0)$ such that $P^{-1} = \phi_P(Q)$,  $\lambda = \phi_P$ and $\lambda^{-1}= \phi_Q$. In light of this note and for convenience, we denote $$L_K(R_n)^{P, Q} :=L_K(R_n)^{\phi_P}= L_K(R_n)^{\lambda}$$ for any such pair $(P, Q)$. As a corollary of Proposition \ref{ZhangTw-SubalLPA}, we obtain that $L_K(R_n)$ is a $K$-subalgebra of all Zhang's twists $L_K(R_n)^{P, Q}$.

\begin{cor}\label{ZhangTw-subal}
	Let $n\ge 2$ be a positive integer, $K$ a field and $R_n$ the rose graph with $n$ petals. Let $P=\left(p_{ij}\right)$ and $Q=\left(q_{ij}\right)$ be elements of $GL_n(L_K(R_n)_0)$ with $P\varphi_P\left(Q\right) =I_n$ and $Q^{-1}=\left(q^{(-1)}_{ij}\right)$. Then, there exists a graded injective homomorphism $\theta_P: L_K(R_n) \longrightarrow   L_K(R_n)^{P, Q}$ of $K$-algebras satisfying
	$$ \theta_P(v)=v, \quad  \theta_P(e_i)=e_i \quad and \quad  \theta_P(e_i^*)=\sum_{k=1}^{n}q^{(-1)}_{ik}e_k^*$$ for all $1\leq i \leq n$.	
\end{cor}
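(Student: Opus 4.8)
The plan is to read this statement off directly from Proposition \ref{ZhangTw-SubalLPA}, since the rose graph is exactly the situation in which the hypotheses of that proposition hold with the cleanest possible data. First I would take the ambient graph to be $E = R_n$ and choose $v = w$ to be the unique vertex of $R_n$. Because $v$ is the identity element $1_{L_K(R_n)}$, the corner $wL_K(E)_0w$ coincides with the entire degree-zero part $L_K(R_n)_0$; hence the requirement $P, Q \in GL_n(L_K(R_n)_0)$ in the corollary is literally the requirement $P, Q \in GL_n(wL_K(E)_0w)$ demanded by the proposition, and the compatibility condition $P\varphi_P(Q) = I_n$ is identical in both statements.

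The one point that deserves a word is how the edge set behaves. For the rose graph one has $E^1 = \{e_1, \ldots, e_n\}$, so the complementary set $E^1 \setminus \{e_1, \ldots, e_n\}$ is empty. Consequently the clauses of Proposition \ref{ZhangTw-SubalLPA} governing $\theta_P(e)$ for $e \in E^1$ and $\theta_P(f^*)$ for $f \in E^1 \setminus \{e_1, \ldots, e_n\}$ collapse: the former gives $\theta_P(e_i) = e_i$ for every $1 \le i \le n$, while the latter is vacuous. What survives is precisely the prescription $\theta_P(v) = v$, $\theta_P(e_i) = e_i$, and $\theta_P(e_i^*) = \sum_{k=1}^{n} q^{(-1)}_{ik} e_k^*$ recorded in the corollary.

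It then remains only to invoke the conclusion of the proposition, which furnishes a graded injective $K$-algebra homomorphism $\theta_P \colon L_K(R_n) \to L_K(R_n)^{\phi_P}$ with these values on generators. Recalling the convention fixed just before the statement, that $L_K(R_n)^{P,Q} = L_K(R_n)^{\phi_P}$ for the pair $(P,Q)$ with $P^{-1} = \phi_P(Q)$, this is exactly the asserted embedding $\theta_P \colon L_K(R_n) \to L_K(R_n)^{P,Q}$. I expect no genuine obstacle here: the entire content is carried by Proposition \ref{ZhangTw-SubalLPA}, and the only verification is the bookkeeping that the rose-graph data satisfies its hypotheses and that the empty complementary edge set removes the extraneous clauses.
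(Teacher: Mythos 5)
Your proposal is correct and matches the paper exactly: the paper's proof of this corollary is the one-line observation that it follows immediately from Proposition \ref{ZhangTw-SubalLPA}, and your specialization ($E = R_n$, $v = w$ the unique vertex, the corner $wL_K(E)_0w$ becoming all of $L_K(R_n)_0$, and the vacuous complementary edge set) is precisely the routine bookkeeping that justifies that one line.
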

\begin{proof}
	It immediately follows from Proposition \ref{ZhangTw-SubalLPA}.	
\end{proof}

Next we give criteria for the homomorphism $\theta_P$ in Corollary \ref{ZhangTw-subal} to be an isomorphism. In order to do so, we need the following useful fact.

\begin{lem} \label{e_i} 
	Let $n\ge 2$ be a positive integer, $K$ a field and $R_n$ the rose graph with $n$ petals. Let $P=\left(p_{ij}\right)$ and $Q=\left(q_{ij}\right)$ be elements of $GL_n(L_K(R_n)_0)$ with $P\varphi_P\left(Q\right) =I_n$.  For a positive integer $m$, let $P_m=\left(p^{(m)}_{ij}\right)$, $P^{-1}_m=\left(p^{(-m)}_{ij}\right)$, $Q_m=\left(q^{(m)}_{ij}\right)$ and $Q^{-1}_m=\left(q^{(-m)}_{ij}\right)$. Then, the following statements hold:
	
	\begin{itemize}
		\item [(1)] $e_i=\varphi_P^{m}\left(\displaystyle \sum_{k=1}^{n}e_kq_{ki}^{(m)}\right),$
		\item [(2)] $e_i^*=\varphi_P^{m}\left(\displaystyle \sum_{k=1}^{n}q_{ik}^{(-m)}e_k^*\right),$
		\item [(3)] $e_i^*=\varphi_P^{-m}\left(\displaystyle \sum_{k=1}^{n}p_{ik}^{(-m)}e_k^*\right),$	
	\end{itemize}
	for all $1\leq i \leq n$ and $m\geq 1$.
\end{lem}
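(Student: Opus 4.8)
The plan is to reduce all three identities to the single fact that $\varphi_P$ is a graded automorphism whose positive and negative powers are computed by the matrices $P_m$ and $Q_m$, so that the displayed sums are literally the images of $e_i$ and $e_i^*$ under $\varphi_P^{\pm m}$. Since $P\varphi_P(Q)=I_n$, that is $P^{-1}=\varphi_P(Q)$, Corollary~\ref{gr-Iso}(2) (equivalently Theorem~\ref{Iso}(4)) shows that $\varphi_P$ is a graded isomorphism with $\varphi_P^{-1}=\varphi_Q$. First I would record the two consequences of Theorem~\ref{Iso}(3)--(4) that drive everything: $\varphi_P^{m}=\varphi_{P_m}$, and $\varphi_P^{-m}=\varphi_Q^{\,m}=\varphi_{Q_m}$, for every $m\ge 1$.

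Next I would unwind the definition of the automorphisms $\varphi_{P_m}$ and $\varphi_{Q_m}$ attached to $P_m$ and $Q_m$ via Theorem~\ref{Iso}(1). With $P_m=(p^{(m)}_{ij})$, $P_m^{-1}=(p^{(-m)}_{ij})$, $Q_m=(q^{(m)}_{ij})$ and $Q_m^{-1}=(q^{(-m)}_{ij})$, the defining formulas give, for all $1\le i\le n$,
\begin{align*}
\varphi_P^{m}(e_i)&=\sum_{k=1}^{n}e_k p^{(m)}_{ki}, & \varphi_P^{m}(e_i^*)&=\sum_{k=1}^{n}p^{(-m)}_{ik}e_k^*,\\
\varphi_P^{-m}(e_i)&=\sum_{k=1}^{n}e_k q^{(m)}_{ki}, & \varphi_P^{-m}(e_i^*)&=\sum_{k=1}^{n}q^{(-m)}_{ik}e_k^*.
\end{align*}
Each assertion of the lemma then follows by applying a suitable power of $\varphi_P$: for (1) and (2), I would apply $\varphi_P^{m}$ to the formulas for $\varphi_P^{-m}(e_i)$ and $\varphi_P^{-m}(e_i^*)$ and use $\varphi_P^{m}\varphi_P^{-m}=\mathrm{id}$; for (3), I would apply $\varphi_P^{-m}$ to the formula for $\varphi_P^{m}(e_i^*)$ and use $\varphi_P^{-m}\varphi_P^{m}=\mathrm{id}$.

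No genuine obstacle arises; the content is purely bookkeeping, and the only place to be careful is with the conventions. One must remember that $P_m$ is built from $\varphi_P$ while $Q_m$ is built from $\varphi_Q=\varphi_P^{-1}$ (so that $\varphi_{Q_m}=\varphi_P^{-m}$), and that $p^{(-m)}_{ij}$ and $q^{(-m)}_{ij}$ denote entries of the inverse matrices $P_m^{-1}$ and $Q_m^{-1}$ rather than powers of entries. Once these are fixed the three equalities are immediate. Should a self-contained argument avoiding the identifications $\varphi_P^{m}=\varphi_{P_m}$ be preferred, I would instead prove the three identities together by induction on $m$, taking the $m=1$ case from the defining formulas for $\varphi_P,\varphi_Q$ together with the relation $\varphi_P\varphi_Q=\mathrm{id}$, and advancing with the recursions $P_{m+1}=P_m\,\varphi_P^{m}(P)$ and $Q_{m+1}=Q_m\,\varphi_Q^{\,m}(Q)$.
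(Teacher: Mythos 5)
Your proof is correct, and it is a genuinely more streamlined route than the one in the paper, even though both rest on the same facts from Theorem~\ref{Iso} and Proposition~\ref{gr-IsoofLA}. The paper also begins by noting $\varphi_{P_m}^{-1}=\varphi_{Q_m}$, but it then extracts the transported-matrix identities $P_m=\varphi_{P_m}(Q_m^{-1})$, $P_m^{-1}=\varphi_{P_m}(Q_m)$ and $Q_m=\varphi_{Q_m}(P_m^{-1})$, and verifies each of (1)--(3) by an entrywise expansion: for instance, for (1) it applies $\varphi_{P_m}$ factor by factor to $\sum_{k}e_kq^{(m)}_{ki}$, substitutes $\varphi_{P_m}(q^{(m)}_{ki})=p^{(-m)}_{ki}$ and $\varphi_{P_m}(e_k)=\sum_{t}e_tp^{(m)}_{tk}$, and recombines using $\sum_{k}p^{(m)}_{tk}p^{(-m)}_{ki}=\delta_{t,i}v$. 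You instead observe that, by the defining formulas of Theorem~\ref{Iso}(1) applied to the matrices $Q_m$ and $P_m$, the three displayed sums are \emph{literally} $\varphi_P^{-m}(e_i)$, $\varphi_P^{-m}(e_i^*)$ and $\varphi_P^{m}(e_i^*)$, so each identity collapses to $\varphi_P^{\pm m}\varphi_P^{\mp m}=\mathrm{id}$; the only inputs are $\varphi_P^{-1}=\varphi_Q$ (from $P\varphi_P(Q)=I_n$ via Theorem~\ref{Iso}(2)--(4)) and $\varphi_P^{m}=\varphi_{P_m}$, $\varphi_Q^{m}=\varphi_{Q_m}$ (Theorem~\ref{Iso}(3)), and the transported-matrix identities become unnecessary. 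What your route buys is brevity and a conceptual explanation of why the lemma is pure bookkeeping; what the paper's computation buys is that the matrix-level identities are made explicit, in the same entrywise style that recurs in the surrounding arguments (e.g.\ the manipulation $q^{(m)}_{ki_0}=\varphi_P\bigl(\sum_{t}q^{(-1)}_{kt}q^{(m+1)}_{ti_0}\bigr)$ in the proof of Theorem~\ref{ZhTwisotoLeAl}). Your cautionary remarks pinpoint exactly the places a slip could occur --- $Q_m$ is formed with $f=\varphi_Q$, not $\varphi_P$, and $p^{(-m)}_{ij}$, $q^{(-m)}_{ij}$ are entries of the inverse matrices $P_m^{-1}$, $Q_m^{-1}$ --- and your fallback induction using $P_{m+1}=P_m\varphi_P^{m}(P)$ and $Q_{m+1}=Q_m\varphi_Q^{m}(Q)$ is likewise sound, so no gap remains.
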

\begin{proof} We first note that since $P\varphi_P\left(Q\right) =I_n$ and by Proposition \ref{gr-IsoofLA} (2), we obtain that $\phi_{P_m}^{-1} = \phi_{Q_m}$,  $P_m = \phi_{P_m}(Q^{-1}_m)$ and $P^{-1}_m = \phi_{P_m}(Q_m)$ for all $m\ge 1$. Consequently, $\phi_{Q_m}(P^{-1}_m) = \phi_{Q_m}(\phi_{P_m}(Q_m))= \phi_{P_m}^{-1}(\phi_{P_m}(Q_m)) = Q_m$ for all $m\ge 1$. Then, for all $1\leq i \leq n$ and $m\geq 1$, we have
	
	\begin{align*}
	\varphi_P^{m}\left( \sum_{k=1}^{n}e_k q_{ki}^{(m)}\right)&=\varphi_{P_m}\left( \sum_{k=1}^{n}e_k q_{ki}^{(m)}\right)=\sum_{k=1}^{n}\varphi_{P_m}\left(e_k\right)\varphi_{P_m}\left(q_{ki}^{(m)}\right) \\
	&=\sum_{k=1}^{n}\left(\sum_{t=1}^{n}e_t p_{tk}^{(m)}\right) p_{ki}^{(-m)} \quad \textnormal{(since $\varphi_{P_m}(Q_m)=P^{-1}_m$)}\\
	&=\sum_{t=1}^{n}e_t\left(\sum_{k=1}^{n}p_{tk}^{(m)} p_{ki}^{(-m)}\right)=e_i\left(\sum_{k=1}^{n}p_{ik}^{(m)} p_{ki}^{(-m)}\right)\\
	&=e_i v = e_i,
	\end{align*} 
	and 
	\begin{align*}
	\varphi_P^{m}\left(\displaystyle \sum_{k=1}^{n}q_{ik}^{(-m)}e_k^*\right)&=\varphi_{P_m}\left( \sum_{k=1}^{n}q_{ik}^{(-m)}e_k^*\right) = \sum_{k=1}^{n}\varphi_{P_m}\left(q_{ik}^{(-m)}\right)\varphi_{P_m}\left(e_k^*\right)\\
	&=\sum_{k=1}^{n}p_{ik}^{(m)}\left(\sum_{t=1}^{n}p_{kt}^{(-m)}e_t^*\right) \quad \textnormal{(since $\varphi_{P_m}(Q^{-1}_m)=P_m$)}\\
	&= \sum_{t=1}^{n}\left(\sum_{k=1}^{n}p_{ik}^{(m)} p_{kt}^{(-m)}\right) e_t^*=\left( \sum_{k=1}^{n}p_{ik}^{(m)} p_{ki}^{(-m)}\right) e_i^*\\
	&=v e_i^*=e_i^*,
	\end{align*}	
	and
	\begin{align*}
	\varphi_P^{-m}\left(\sum_{k=1}^{n}p_{ik}^{(-m)}e_k^*\right)&=\varphi_Q^{m}\left(\sum_{k=1}^{n}p_{ik}^{(-m)}e_k^*\right)=\varphi_{Q_m}\left(\sum_{k=1}^{n}p_{ik}^{(-m)}e_k^*\right)\\
	&=\sum_{k=1}^{n}\varphi_{Q_m}\left(p_{ik}^{(-m)}\right)\varphi_{Q_m}\left(e_k^*\right)\\
	&=\sum_{k=1}^{n}q_{ik}^{(m)}\left(\sum_{t=1}^{n}q_{kt}^{(-m)}e_t^*\right) \quad \textnormal{(since $\varphi_{Q_m}(P^{-1}_m)=Q_m$)}\\ &= \sum_{t=1}^{n}\left(\sum_{k=1}^{n}q_{ik}^{(m)} q_{kt}^{(-m)}\right) e_t^*=\left( \sum_{k=1}^{n}q_{ik}^{(m)} q_{ki}^{(-m)}\right) e_i^*\\ &=v e_i^*=e_i^*,
	\end{align*}
	thus proving items (1), (2) and (3). This completes the proof of the lemma.
\end{proof}

%\begin{defn}A graded algebra $A$ is called rigid to Zhang twist by graded automorphism $\sigma$ if $A$ is isomorphic to its Zhang twist $A^{\sigma}$.\end{defn}

We are now in a position to characterize when is $L_K(R_n)$ rigid to Zhang twist in the sense that its twist by graded automorphism developed in the previous section turns out to be isomorphic to $L_K(R_n)$. 

\begin{thm}\label{ZhTwisotoLeAl} 
Let $n\ge 2$ be a positive integer, $K$ a field and $R_n$ the rose graph with $n$ petals. Let $P=\left(p_{ij}\right)$ and $Q=\left(q_{ij}\right)$ be elements of $GL_n(L_K(R_n)_0)$ with $P\varphi_P\left(Q\right) =I_n$.  For a positive integer $m$, let $P_m=\left(p^{(m)}_{ij}\right)$, $P^{-1}_m=\left(p^{(-m)}_{ij}\right)$, $Q_m=\left(q^{(m)}_{ij}\right)$ and $Q^{-1}_m=\left(q^{(-m)}_{ij}\right)$. Then, the $K$-algebra homomorphism $\theta_P: L_K(R_n) \longrightarrow   L_K(R_n)^{P, Q}$, defined in Corollary \ref{ZhangTw-subal}, is an isomorphism if and only if $p_{ij}^{(-m)}$, $q_{ij}^{(m)}$, $q_{ij}^{(-m)}$ $\in \textnormal{Im}(\theta_P)$ for all $m\geq 1$ and $1\leq i, j \leq n$. 
\end{thm}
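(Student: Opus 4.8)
The plan is to note first that $\theta_P$ is already injective by Corollary~\ref{ZhangTw-subal}, so the whole statement reduces to deciding when $\theta_P$ is surjective. Writing $S:=\textnormal{Im}(\theta_P)$, this is a $\star$-subalgebra of $L_K(R_n)^{P,Q}$ that visibly contains $v$, every $e_i=\theta_P(e_i)$, and every $\theta_P(e_i^*)=\sum_k q^{(-1)}_{ik}e_k^*$, and I must decide when it fills the whole underlying space. The necessity ($\Rightarrow$) is immediate: if $\theta_P$ is onto then $S=L_K(R_n)$ contains everything, in particular the degree-zero elements $p^{(-m)}_{ij},q^{(m)}_{ij},q^{(-m)}_{ij}$.

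For sufficiency I would isolate two elementary facts about the twist (recall $\phi_Q=\phi_P^{-1}$ since $P\phi_P(Q)=I_n$): first, for homogeneous $a$ of degree $0$ and any $b$ one has $a\star b=a\,\phi_P^{0}(b)=ab$, so $S$ is closed under the \emph{ordinary} product by its degree-zero members; second, and more generally, $ab=a\star\phi_Q^{\deg a}(b)$ for homogeneous $a$, which converts any ordinary product into an iterated $\star$-product. Applying the latter to a spanning monomial gives
\[
e_{i_1}\cdots e_{i_a}e_{j_1}^*\cdots e_{j_b}^*=e_{i_1}\star\phi_Q(e_{i_2})\star\cdots\star\phi_Q^{a-1}(e_{i_a})\star\phi_Q^{a}(e_{j_1}^*)\star\cdots\star\phi_Q^{a-b+1}(e_{j_b}^*).
\]
Since these monomials span $L_K(R_n)$ and $S$ is $\star$-closed, it then suffices to prove that each factor $\phi_Q^{k}(e_i)$ (for $k\ge 0$) and $\phi_Q^{k}(e_j^*)$ (for every $k\in\mathbb{Z}$) already lies in $S$.

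I would establish these memberships by feeding Lemma~\ref{e_i} into fact (i). For the $e^*$-factors: the base case $e_j^*=\sum_i q_{ji}\,\theta_P(e_i^*)\in S$ follows from $QQ^{-1}=I_n$ and $q_{ji}=q^{(1)}_{ji}\in S$; then $\phi_Q^{k}(e_j^*)=\sum_\ell q^{(-k)}_{j\ell}e_\ell^*\in S$ for $k\ge1$ by Lemma~\ref{e_i}(2), and $\phi_P^{m}(e_j^*)=\sum_\ell p^{(-m)}_{j\ell}e_\ell^*\in S$ for $m\ge1$ by Lemma~\ref{e_i}(3), each summand being a hypothesised degree-zero entry left-$\star$-multiplying $e_\ell^*$. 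The positive factors are the delicate point, and I expect them to be the main obstacle: fact (i) does not apply to $\phi_Q^{m}(e_i)=\sum_k e_k q^{(m)}_{ki}$ because the degree-zero entry now sits on the \emph{right} of $e_k$. Here I would route through the matrix identity $Q_{m+1}=Q\phi_Q(Q_m)$ to write $\phi_Q(q^{(m)}_{ki})=\sum_j q^{(-1)}_{kj}q^{(m+1)}_{ji}\in S$ (an ordinary product of degree-zero members of $S$, hence in $S$ by (i)), and then $e_k q^{(m)}_{ki}=e_k\star\phi_Q(q^{(m)}_{ki})\in S$, so that $\phi_Q^{m}(e_i)\in S$ by Lemma~\ref{e_i}(1). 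This is precisely where all three families $p^{(-m)},q^{(m)},q^{(-m)}$ of hypothesised entries are consumed; once every factor is shown to lie in $S$, the displayed decomposition yields $S=L_K(R_n)$, and $\theta_P$ is an isomorphism.
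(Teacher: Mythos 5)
Your proposal is correct, and it uses exactly the same raw materials as the paper's proof --- Lemma~\ref{e_i}(1)--(3), the base-case identity $e_j^*=\sum_i q_{ji}\star\theta_P(e_i^*)$, and the matrix identity $\phi_Q(Q_m)=Q^{-1}Q_{m+1}$ (which the paper states in the equivalent form $Q_m=\phi_P(Q^{-1}Q_{m+1})$) --- but it organizes them differently. The paper runs two nested inductions: first on $|\alpha|$ to show every path $\alpha$ and every $\alpha^*$ lies in $\mathrm{Im}(\theta_P)$, peeling off one edge at a time via $\alpha=\beta\star\bigl(\sum_k e_kq^{(m)}_{ki_0}\bigr)$, and then a second induction on $|\beta|$ for the mixed monomials $\alpha\beta^*$, with a three-way case split on the sign of $m-s=|\alpha|-|\beta|$ deciding whether Lemma~\ref{e_i}(2) or (3) is invoked. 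You instead write each spanning monomial once and for all as a left-associated iterated $\star$-product of the twisted generators $\phi_Q^{k}(e_i)$ and $\phi_Q^{k}(e_j^*)$, and then verify membership of each such generator in $\mathrm{Im}(\theta_P)$ separately; your sign trichotomy on the exponent $k$ of $\phi_Q^{k}(e_j^*)$ is precisely the paper's $m-s$ case split in disguise, and your treatment of the ``delicate'' positive factors $\phi_Q^{m}(e_i)=\sum_k e_k\star\phi_Q(q^{(m)}_{ki})$ reproduces verbatim the computation inside the paper's first inductive step. In effect your closed-form factorization unrolls the paper's inductions: what it buys is a cleaner global structure (no induction, all hypotheses consumed in one visible place), at the cost of having to state and trust the bookkeeping of the general iterated formula, which the paper's edge-at-a-time inductions avoid. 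All your individual verifications check out (the identity $ab=a\star\phi_Q^{\deg a}(b)$, the exponent pattern $a,a-1,\dots,a-b+1$ on the starred $e^*$-factors, and the use of $q^{(1)}_{ij}=q_{ij}$ from the $m=1$ instance of the hypothesis), so the argument is complete as proposed.
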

\begin{proof}
	$(\Longrightarrow)$ It is obvious.
	
	$(\Longleftarrow)$ By Corollary \ref{ZhangTw-subal}, $\theta_P$ is always injective, and so it suffices to show that $\theta_P$ is surjective. We first claim that $\alpha$ and $\alpha^*\in\textnormal{Im}(\theta_P)$ for all $\alpha\in (R_n)^*$. We use induction on $|\alpha|$ to establish the claim. If $|\alpha|=1$, then since $e_i = \theta_P (e_i)\in \textnormal{Im}(\theta_P)$ for all $1\le i \le n$, $\alpha \in \textnormal{Im}(\theta_P)$. Since $\theta_P(e_i^*)=\displaystyle \sum_{k=1}^{n}q^{(-1)}_{ik}e_k^*$ for all $1\le i\le n$, we have 
	\begin{align*}
	\begin{pmatrix}
	\theta_P(e_1^*)\\
	\vdots\\
	\theta_P(e_n^*)
	\end{pmatrix}= \begin{pmatrix}
	\displaystyle\sum_{k=1}^{n}q^{(-1)}_{1k}e_k^*\\
	\vdots\\
	\displaystyle\sum_{k=1}^{n}q^{(-1)}_{nk}e_k^*
	\end{pmatrix}= Q^{-1}\begin{pmatrix}
	e_1^*\\
	\vdots\\
	e_n^*
	\end{pmatrix},
	\end{align*}
	and so
	\begin{align*}
	\begin{pmatrix}
	e_1^*\\
	\vdots\\
	e_n^*
	\end{pmatrix}=Q\begin{pmatrix}
	\theta_P(e_1^*)\\
	\vdots\\
	\theta_P(e_n^*)
	\end{pmatrix}.
	\end{align*}
	This follows that $e_i^*=\displaystyle \sum_{k=1}^{n} q_{ik}  \theta_P(e_k^*) = \displaystyle \sum_{k=1}^{n} q_{ik}\ast  \theta_P(e_k^*)$ for all $1\le i\le n$ (since $q_{ij}\in L_K(R_n)_0$ for all $1\le i, j\le n$). By our hypothesis, $q_{ij}\in \textnormal{Im}(\theta_P)$ for all $1\le i, j\le n$, and so $e^*_i = \displaystyle \sum_{k=1}^{n} q_{ik}\ast  \theta_P(e_k^*)\in \textnormal{Im}(\theta_P)$ for all $1\le i\le n$, that means, $\alpha^*\in \textnormal{Im}(\theta_P)$.
	
	Now we proceed inductively, that means, we have $\alpha$ and $\alpha^*\in\textnormal{Im}(\theta_P)$ for all $\alpha\in (R_n)^*$ with $1 < |\alpha|\le m$. For $\alpha\in (R_n)^*$ with $|\alpha|\ge m+1$, we write $\alpha = \beta e_{i_0}$ for some $\beta\in(R_n)^*$ with $|\beta|=m$ and for some $1\le i_0\le n$. By the induction hypothesis, $\beta \in\textnormal{Im}(\theta_P)$. By Lemma \ref{e_i} (1), we have $e_{i_0}= \varphi_P^{m}\left(\displaystyle \sum_{k=1}^{n}e_kq_{ki_{0}}^{(m)}\right)$, 	and so
	$$\alpha=\beta e_i=\beta \varphi_P^{m}(\sum_{k=1}^{n}e_kq_{ki_0}^{(m)})=\beta \ast (\sum_{k=1}^{n}e_k q_{ki_0}^{(m)}).$$
	On the other hand, since $\phi^{-1}_Q = \phi_P$, we have 
	$$Q_m=Q\varphi_Q(Q)\cdots\varphi^{m-1}_Q(Q)=\varphi_P(\varphi_Q(Q)\varphi^2_Q(Q)\cdots\varphi^{m}_Q(Q))=\varphi_P(Q^{-1}Q_{m+1}),$$ 
	so $q_{ki_0}^{(m)}=\varphi_P(\displaystyle\sum_{t=1}^{n}q_{kt}^{(-1)}q_{ti_0}^{(m+1)})$. This shows that
	\begin{align*}
	\alpha &= \beta \ast(\sum_{k=1}^{n}e_k\varphi_P(\displaystyle\sum_{t=1}^{n}q_{kt}^{(-1)}q_{ti_0}^{(m+1)})) = \beta\ast(\sum_{k=1}^{n}( e_k \ast \displaystyle\sum_{t=1}^{n}q_{kt}^{(-1)}q_{ti_0}^{(m+1)}))\\
	&= \beta \ast (\sum_{k=1}^{n} ( e_k \ast (\displaystyle\sum_{t=1}^{n}q_{kt}^{(-1)}*q_{ti_0}^{(m+1)}))) \in  \textnormal{Im}(\theta_P)\ (\text{by our hypothesis}).
	\end{align*}
	
	Write $\alpha^* = \gamma^*e^*_{t_0}$ for some $1\le t_0\le n$  and $\gamma\in (R_n)^*$ with $|\gamma| = m$. By the induction hypothesis, $\gamma^* \in\textnormal{Im}(\theta_P)$. By Lemma \ref{e_i} (3), we have that $e_{t_0}^*=\varphi_P^{-m}\left(\displaystyle \sum_{k=1}^{n}p_{t_0k}^{(-m)}e_k^*\right)$, and hence
	\begin{align*}
	\alpha^*=\gamma^* e_{t_0}^*&=\gamma^*\varphi_P^{-m}(\displaystyle \sum_{k=1}^{n}p_{t_0k}^{(-m)}e_k^*) = \gamma^* \ast (\displaystyle \sum_{k=1}^{n}p_{t_0k}^{(-m)} e_k^*)\\
	&=\gamma^* \ast (\displaystyle \sum_{k=1}^{n}p_{t_0k}^{(-m)} * e_k^*) \in \textnormal{Im}(\theta_P)\ (\text{by our hypothesis}),
	\end{align*}
	thus showing the claim.
	
	We next prove that $\alpha\beta^*\in \textnormal{Im}(\theta_P)$ for all $\alpha$ and $\beta\in (R_n)^*$ with $m:=|\alpha|\ge 1$ and $s:=|\beta|\ge 1$. We use induction on $|\beta|$ to establish the fact. If $|\beta| =1$, then by the above claim, $\alpha$ and $e^*_k\in \textnormal{Im}(\theta_P)$ for all $1\le k\le n$, and so
	\begin{align*}
	\alpha\beta^*&=\alpha e_i^* = \alpha \varphi_P^{m}(\displaystyle \sum_{k=1}^{n}q_{ik}^{(-m)}e_k^*) \quad \textnormal{(by Lemma \ref{e_i} (2))}\\
	&=\alpha \ast (\displaystyle \sum_{k=1}^{n}q_{ik}^{(-m)}e_k^*) = \alpha \ast (\displaystyle \sum_{k=1}^{n}q_{ik}^{(-m)} * e_k^*) \in \textnormal{Im}(\theta_P)\ (\text{by our hypothesis}).
	\end{align*}
	Now we proceed inductively. We need to show that $\alpha\beta^*e_i^* \in \textnormal{Im}(\theta_P)$ for all $1\le i\le n$. We should note that by the induction hypothesis, $\alpha\beta^*\in \textnormal{Im}(\theta_P)$. If $m-s=0$, we have 
	$$\alpha\beta^*e_i^*=\alpha\beta^* * e_i^* \in \textnormal{Im}\left(\theta_P\right).$$
	If $m-s>0$, then we obtain that
	\begin{align*}
	\alpha\beta^*e_i^*&=\alpha\beta^* \varphi_P^{m-s}(\displaystyle \sum_{k=1}^{n}q_{ik}^{(-m+s)}e_k^*)=\alpha\beta^* \ast (\displaystyle \sum_{k=1}^{n}q_{ik}^{(-m+s)}e_k^*)\\
	&=\alpha\beta^* \ast (\displaystyle \sum_{k=1}^{n}q_{ik}^{(-m+s)}*e_k^*) \in \textnormal{Im}(\theta_P).
	\end{align*}
	If $m-s<0$, then we receive that
	\begin{align*}
	\alpha\beta^*e_i^*&=\alpha\beta^* \varphi_P^{m-s}(\displaystyle \sum_{k=1}^{n}p_{ik}^{(m-s)}e_k^*)=\alpha\beta^* \ast (\displaystyle \sum_{k=1}^{n}p_{ik}^{(m-s)}e_k^*\\
	&=\alpha\beta^* \ast (\displaystyle \sum_{k=1}^{n}p_{ik}^{(m-s)}*e_k^*) \in \textnormal{Im}(\theta_P),
	\end{align*}
	proving	the fact. From these observations, we immediately get that $\alpha\beta^*\in \textnormal{Im}(\theta_P)$ for all $\alpha$ and $\beta\in (R_n)^*$. It is obvious that $L_K(R_n)^{P, Q}$ is spanned as a $K$-vector space by $\{\alpha\beta^*\mid \alpha, \beta\in (R_n)^*\}$. This implies that $\textnormal{Im}(\theta_P) = L_K(R_n)^{P, Q}$, that means, $\theta_P$ is surjective, thus finishing the proof.
\end{proof}

Consequently, we provide a simpler criterion for the homomorphism $\theta_P$ be to an isomorphism in the case when $\phi_P(P) = P$.

\begin{cor}\label{varphiP=P}
	Let $n\ge 2$ be a positive integer, $K$ a field and $R_n$ the rose graph with $n$ petals. Let $P= (p_{i,j})$ be an element of $GL_n\left(L_K(R_n)_0\right)$ with $\varphi_P\left(P\right)=P$ and $P^{-1} = (p^{(-1)}_{ij})$. Then, the $K$-algebra homomorphism $\theta_P: L_K(R_n) \longrightarrow L_K(R_n)^{P, P^{-1}}$, defined by 
	\begin{center}
		$\theta_P(v)=v,\ \theta_P(e_i)=e_i \ \ and \ \ \theta_P(e_i^*)=\sum_{k=1}^{n}p_{ik}e_k^*$ \quad for all $1\leq i \leq n$,	
	\end{center}
	is an isomorphism if and only if $p_{ij}$, $p_{ij}^{(-1)} \in \textnormal{Im}(\theta_P)$ for all $1\leq i, j \leq n$. 
\end{cor}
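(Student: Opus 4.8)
The plan is to derive this from Theorem~\ref{ZhTwisotoLeAl} by first identifying, in the special case $\varphi_P(P)=P$, exactly which matrix entries its criterion involves, and then exploiting that all of them are homogeneous of degree $0$. First I would record the structural consequences of the hypothesis. By Proposition~\ref{gr-IsoofLA}~(2), $\varphi_P(P)=P$ forces $\varphi_P(P^{-1})=P^{-1}$ as well, so one may take $Q=P^{-1}$ as the inverse pair, $\varphi_P$ is a graded automorphism, and $\varphi_P^{m}=\varphi_{P^{m}}$ for every $m\in\mathbb{Z}$. Since $\varphi_P(P)=P$ yields $\varphi_P^{k}(P)=P$ for all $k\ge 0$, the definition $P_m=P\varphi_P(P)\cdots\varphi_P^{m-1}(P)$ collapses to $P_m=P^{m}$, whence $P_m^{-1}=P^{-m}$. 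Similarly, with $\varphi_Q=\varphi_P^{-1}$ and $\varphi_Q(Q)=Q$, one gets $Q_m=Q^{m}=P^{-m}$ and $Q_m^{-1}=P^{m}$. Consequently the three families $p_{ij}^{(-m)}$, $q_{ij}^{(m)}$, $q_{ij}^{(-m)}$ appearing in Theorem~\ref{ZhTwisotoLeAl} are precisely the entries of $P^{-m}$, $P^{-m}$ and $P^{m}$, so that criterion reads exactly: every entry of $P^{m}$ and of $P^{-m}$ lies in $\textnormal{Im}(\theta_P)$ for all $m\ge 1$.

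With this reformulation the forward implication is immediate, taking $m=1$. For the converse I would assume $p_{ij},p_{ij}^{(-1)}\in\textnormal{Im}(\theta_P)$ and promote this to all powers using the key observation that on homogeneous elements of degree $0$ the twisted product agrees with the original one: indeed $a\star b=a\varphi_P^{\deg a}(b)=ab$ whenever $\deg a=0$. Since $\theta_P$ is a homomorphism into $(L_K(R_n)^{P,P^{-1}},\star)$, its image is a $K$-subalgebra for $\star$; and because every $p_{ij},p_{ij}^{(-1)}$ lies in $L_K(R_n)_0$, the image is closed under ordinary products of these degree-$0$ elements. Now each entry of $P^{m}$ (respectively $P^{-m}$) is a $K$-linear combination of ordinary products $p_{ik_1}p_{k_1k_2}\cdots$ (respectively $p^{(-1)}_{ik_1}p^{(-1)}_{k_1k_2}\cdots$), all homogeneous of degree $0$; hence each such entry lies in $\textnormal{Im}(\theta_P)$. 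Applying the reformulated criterion of Theorem~\ref{ZhTwisotoLeAl} then gives that $\theta_P$ is an isomorphism.

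The only delicate point, and the technical heart of the argument, is precisely this interplay between the two products: $\textnormal{Im}(\theta_P)$ is a subalgebra for the twisted multiplication $\star$, whereas the entries of $P^{\pm m}$ are built from the ordinary matrix multiplication in $M_n(L_K(R_n)_0)$, and a priori these differ. The resolution is the degree-$0$ coincidence $\star=\cdot$, which makes exactly the ordinary products needed to assemble the entries of $P^{\pm m}$ genuinely available inside the $\star$-subalgebra $\textnormal{Im}(\theta_P)$. Everything else is bookkeeping translating the general criterion into the simpler $P^{\pm m}$ form, so I expect the collapse $P_m=P^{m}$, $Q_m=P^{-m}$ and the degree-$0$ product identity to be the two load-bearing steps.
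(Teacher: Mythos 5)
Your proposal is correct and takes essentially the same route as the paper's own proof: both use $\varphi_P(P)=P$ together with Proposition~\ref{gr-IsoofLA}~(2) to get $\varphi_P(P^{-1})=P^{-1}$ and the collapses $P_m=P^{m}$, $Q_m=P^{-m}$ (so the criterion of Theorem~\ref{ZhTwisotoLeAl} becomes: all entries of $P^{\pm m}$ lie in $\textnormal{Im}(\theta_P)$), and both then exploit that the twisted product agrees with the ordinary product on degree-$0$ elements to propagate the $m=1$ hypothesis to all powers. The only cosmetic difference is that the paper phrases this degree-$0$ coincidence at the matrix level, identifying $P^{m}$ and $P^{-m}$ with the $m$-fold $\star$-powers of $P$ and $P^{-1}$ in $M_n(L_K(R_n)^{P,P^{-1}})$, whereas you state it elementwise via $a\star b=ab$ for $\deg a=0$; these are the same observation.
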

\begin{proof}
	$(\Longrightarrow)$ It is obvious.
	
	$(\Longleftarrow)$ Since $\phi_P(P) = P$ and by Proposition \ref{gr-IsoofLA} (2), $\varphi_P$ is a graded automorphism of $L_K(R_n)$ such that $\varphi_P\left(P^{-1}\right)=P^{-1}$ and $\phi^m_P = \phi_{P^m}$ for all integer $m$. This implies that
	$$\varphi^m_P(P)=P\quad\text{ and }\quad \varphi^m_{P^{-1}}(P^{-1})=P^{-1}$$ for all $m\ge 0$, and so 
	$$P_m  = P\varphi_P\left(P\right)\cdots\varphi^{m-1}_P\left(P\right)=P^m$$ and	$$P^{-1}_m =P^{-1}\varphi_{P^{-1}}\left(P^{-1}\right)\cdots\varphi^{m-1}_{P^{-1}}\left(P^{-1}\right)=P^{-m}$$ for all $m\ge 0$. Since $p_{ij}, p_{ij}^{(-1)} \in L_K(R_n)_0$, we must have
	\begin{center}
		$\smash[b]{P^m=\underbrace{P \ast P \ast\cdots \ast P}_\text{$m$ times}}\vphantom{\underbrace{P \ast P \ast\cdots \ast P}_{m}}$\quad and \quad $\smash[b]{P^{-m}=\underbrace{P^{-1} \ast P^{-1} \ast\cdots \ast P^{-1}}_\text{$m$ times}}\vphantom{\underbrace{P^{-1} \ast P^{-1} \ast\cdots \ast P^{-1}}_{m}}$
	\end{center}
	in $M_n(L_K(R_n)^P)$, that means, $P^m$ and $P^{-m}$ are exactly the $m$th powers of $P$ and $P^{-1}$ in $M_n(L_K(R_n)^P)$, respectively. Then, since $p_{ij}, p_{ij}^{(-1)} \in \textnormal{Im}(\theta_P)$ for all $1\leq i, j \leq n$, all entries of both $P^m$ and $P^{-m}$ lie in $\textnormal{Im}(\theta_P)$ for all $m\geq 1$. By Theorem \ref{ZhTwisotoLeAl}, we immediately obtain that $\theta_P$ is an isomorphism, thus finishing the proof.
\end{proof}

The first consequence of Corollary \ref{varphiP=P} is to show that the Zhang twist $L_K(R_n)^P$ is isomorphic to $L_K(R_n)$ for all $P\in GL_n(K)$. 

\begin{cor}\label{gl-varphiP=P}
	Let $n\ge 2$ be a positive integer, $K$ a field and $R_n$ the rose graph with $n$ petals. Then, for every $P\in GL_n(K)$, the $K$-algebra homomorphism $\theta_P: L_K(R_n) \longrightarrow L_K(R_n)^{P, P^{-1}}$, defined in Corollary \ref{varphiP=P}, is an isomorphism. 
\end{cor}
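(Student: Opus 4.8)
The plan is to deduce this immediately from the criterion established in Corollary~\ref{varphiP=P}. To apply that corollary to $P\in GL_n(K)$, two conditions must be checked: that the hypothesis $\varphi_P(P)=P$ holds, and that every entry of $P$ and of $P^{-1}$ lies in $\textnormal{Im}(\theta_P)$.

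First I would verify $\varphi_P(P)=P$. Regarding $P=(p_{ij})\in GL_n(K)$ as an element of $GL_n(L_K(R_n)_0)$ amounts to reading each scalar $p_{ij}\in K$ as the degree-zero element $p_{ij}\cdot v=p_{ij}\cdot 1_{L_K(R_n)}$. Since $\phi_P$ is a $K$-algebra homomorphism with $\phi_P(v)=v$, it fixes every scalar multiple of $v$, whence $\phi_P(p_{ij})=p_{ij}$ for all $i,j$; this is precisely $\varphi_P(P)=P$ (as already noted in the proof of Corollary~\ref{gl-IsoofLA}). Thus $P$ meets the hypotheses of Corollary~\ref{varphiP=P}, and $\theta_P$ is the graded injective homomorphism described there.

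Next I would check the membership condition. As $\theta_P$ is a $K$-algebra homomorphism, $\textnormal{Im}(\theta_P)$ is a $K$-subalgebra of $L_K(R_n)^{P,P^{-1}}$; combining $\theta_P(v)=v$ with $K$-linearity gives $\theta_P(k\cdot v)=k\cdot v$ for every $k\in K$, so every scalar multiple of $v$ belongs to $\textnormal{Im}(\theta_P)$. Each entry $p_{ij}$ of $P$ and each entry $p^{(-1)}_{ij}$ of $P^{-1}$ is exactly such a scalar multiple of $v$, hence lies in $\textnormal{Im}(\theta_P)$ for all $1\le i,j\le n$. Feeding these two verifications into Corollary~\ref{varphiP=P} then yields at once that $\theta_P$ is an isomorphism. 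I anticipate no real obstacle: all the substance resides in the criterion of Corollary~\ref{varphiP=P}, and the present statement only exploits the trivial fact that the entries of a scalar matrix and of its inverse are scalars, which $\theta_P$ manifestly fixes.
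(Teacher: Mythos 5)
Your proposal is correct and follows exactly the paper's route: the paper likewise deduces the result from Corollary \ref{varphiP=P}, citing Corollary \ref{gl-IsoofLA} for $\varphi_P(P)=P$ and noting that the entries of $P$ and $P^{-1}$, being scalars, obviously lie in $\textnormal{Im}(\theta_P)$. Your write-up merely makes explicit the two verifications the paper treats as immediate, which is fine.
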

\begin{proof} Let $P$ be an arbitrary element of $GL_n(K)$.
	By Corollary \ref{gl-IsoofLA}, $\phi_P$ is a graded automorphism of $L_K(R_n)$ with 	$\varphi_P(P)=P$. Moreover, it is obvious that all entries of both $P$ and $P^{-1}$ lie in $\textnormal{Im}(\theta_P)$. Then, by Corollary \ref{varphiP=P}, $\theta_P$ is an isomorphism, thus finishing the proof.
\end{proof}

The second consequence of Corollary \ref{varphiP=P} is to show that the Zhang twist of $L_K(R_n)$ by Anick type graded automorphisms mentioned in Corollary \ref{Anick-IsoofLA}  are isomorphic to $L_K(R_n)$. 

\begin{cor}\label{Anick-varphiP=P}
	Let $n\ge 2$ be a positive integer, $K$ a field and $R_n$ the rose graph with $n$ petals. For every $p\in A_{R_n}(e_1, e_2)\cap L_K(R_n)_0$, consider $U_p=\begin{pmatrix}
	1 & p & 0 & \dots & 0 \\
	0 & 1 & 0 & \dots & 0 \\
	\vdots & \vdots &\vdots &\vdots &\vdots \\
	0 & 0 & 0 & \dots & 1 \\
	\end{pmatrix}\in GL_n(L_K(R_n))_0$. Then the $K$-algebra homomorphism $$\theta_p: L_K(R_n) \longrightarrow L_K(R_n)^{ \phi_{U_p}}$$ defined by $$ \theta_p(v)=v, \quad  \theta_p(e_i)=e_i,\quad \theta_p(e^*_j) = e^*_j \quad and \quad  \theta_p(e_1^*)=e_1^* + pe^*_2$$ for all $1\leq i \leq n$ and $2\le j\le n$, is an isomorphism. 
\end{cor}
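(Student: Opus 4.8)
The plan is to derive this directly from Corollary~\ref{varphiP=P} applied to the matrix $P := U_p$. First I would confirm that $P$ meets the hypotheses there. Since $p \in A_{R_n}(e_1,e_2)\cap L_K(R_n)_0$, every entry of $U_p$ (namely $v$, $p$, and $0$) lies in $L_K(R_n)_0$, so $U_p \in GL_n(L_K(R_n)_0)$ with $U_p^{-1}=U_{-p}$. Moreover $\phi_{U_p}$ fixes $A_{R_n}(e_1,e_2)$ pointwise, as recorded just before Corollary~\ref{Anick-IsoofLA}; applying $\phi_{U_p}$ entrywise therefore leaves $U_p$ unchanged, i.e.\ $\phi_{U_p}(U_p)=U_p$. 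Reading off $p_{11}=v$, $p_{12}=p$ and $p_{ij}=\delta_{ij}$ otherwise, the homomorphism furnished by Corollary~\ref{varphiP=P} sends $e_1^*\mapsto \sum_k p_{1k}e_k^* = e_1^*+pe_2^*$ and $e_j^*\mapsto e_j^*$ for $j\ge 2$, which is precisely the map $\theta_p$ in the statement.

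It then remains to verify the criterion of Corollary~\ref{varphiP=P}: that all entries of $P=U_p$ and of $P^{-1}=U_{-p}$ lie in $\textnormal{Im}(\theta_p)$. These entries lie in $\{0,v,p,-p\}$. Now $0$ and $v=\theta_p(v)$ are trivially in the image, and since $\textnormal{Im}(\theta_p)$ is a $K$-subalgebra it is enough to show $p\in\textnormal{Im}(\theta_p)$. The key observation I would isolate is that the twisted product $\star$ restricts to the ordinary product on $A_{R_n}(e_1,e_2)$: this subalgebra is graded, being generated by the homogeneous elements $v,e_1,e_3,\dots,e_n,e_2^*,\dots,e_n^*$, and for homogeneous $a,b\in A_{R_n}(e_1,e_2)$ with $a$ of degree $d$ we get $a\star b = a\,\phi_{U_p}^{d}(b)=ab$ because $\phi_{U_p}$ acts as the identity on this subalgebra.

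Granting this, I would expand $p$ in the monomial basis of $A_{R_n}(e_1,e_2)$ recalled in the text, whose elements are products $e_{k_1}\cdots e_{k_m}e_{t_1}^*\cdots e_{t_h}^*$ with $k_i\in\{1,3,\dots,n\}$ and $t_j\in\{2,\dots,n\}$. Since $\theta_p$ fixes each generator $e_{k_i}$ and $e_{t_j}^*$, applying the homomorphism $\theta_p$ to such a monomial yields the $\star$-product of these generators; as all partial products stay inside $A_{R_n}(e_1,e_2)$, the restriction claim identifies this $\star$-product with the ordinary product, i.e.\ with the monomial itself. Hence $\theta_p(p)=p$, so $p$ and likewise $-p$ lie in $\textnormal{Im}(\theta_p)$, and Corollary~\ref{varphiP=P} concludes that $\theta_p$ is an isomorphism. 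The only nonformal point is the restriction-of-products claim, and the thing to get right there is that $A_{R_n}(e_1,e_2)$ is a \emph{graded} subalgebra on which $\phi_{U_p}$ is trivial, so that the degree shift built into $\star$ has no effect.
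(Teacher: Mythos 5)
Your proposal is correct and follows essentially the same route as the paper: both invoke Corollary~\ref{varphiP=P} with $P=U_p$, use the fact recorded before Corollary~\ref{Anick-IsoofLA} that $\phi_{U_p}$ fixes $A_{R_n}(e_1,e_2)$ pointwise to get $\phi_{U_p}(U_p)=U_p$, and reduce everything to showing $\theta_p(p)=p$ by expanding $p$ in the monomial basis and observing that the twisted product of the fixed generators coincides with the ordinary product. Your packaging of the monomial computation as ``$\star$ restricts to the ordinary product on the graded subalgebra $A_{R_n}(e_1,e_2)$'' is exactly the calculation the paper carries out term by term, so there is no substantive difference.
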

\begin{proof} Let $p$ be an arbitrary element of $A_{R_n}(e_1, e_2)\cap L_K(R_n)_0$. By Corollary \ref{Anick-IsoofLA}, $\phi_{U_p}$ is a graded automorphism of $L_K(R_n)$ with 	$ \phi_{U_p}(q)=q$ for all $q\in A_{R_n}(e_1, e_2)\cap L_K(R_n)_0$, where $$U_p=\begin{pmatrix}
	1 & p & 0 & \dots & 0 \\
	0 & 1 & 0 & \dots & 0 \\
	\vdots & \vdots &\vdots &\vdots &\vdots \\
	0 & 0 & 0 & \dots & 1 \\
	\end{pmatrix}\in GL_n(L_K(R_n))_0\quad  \text{ and }\quad U^{-1}_p = U_{-p}.$$ By Theorem \ref{ZhTwisotoLeAl}, $\theta_p := \theta_{U_p}$ is a $K$-algebra homomorphism satisfying  
	$\theta_p(v)=v$,  $\theta_p(e_i)=e_i$, $\theta_p(e^*_j) = e^*_j$  and   $\theta_p(e_1^*)=e_1^* + pe^*_2$ for all $1\leq i \leq n$ and $2\le j\le n$. 
	
	We claim that $\theta_p(p)=p$. Indeed, write $p=\sum \alpha\beta^*$ where $\left| \alpha\right| =\left| \beta\right|=t $ and $\alpha = e_{k_1}e_{k_2}\cdots e_{k_t}$, $\beta^* = e^*_{s_1}e^*_{s_2}\cdots e^*_{s_t}$ with $e_{k_i} \in \left\lbrace e_1, e_3, \ldots e_n\right\rbrace $ and $e^*_{s_i} \in \left\lbrace e^*_2, e^*_3, \ldots, e^*_n\right\rbrace $. Since $ \phi_{U_p}(q)=q$ for all $q\in A_{R_n}(e_1, e_2)\cap L_K(R_n)_0$, we must have $\phi_{U_p}(e_{k_i})=e_{k_i}$ and $\phi_{U_p}(e^*_{s_i})=e^*_{s_i}$ for all $1\le i\le t$. Then, we have that
	\begin{align*}
	\theta_p(\alpha\beta^*)&=\theta_p(\alpha)\ast\theta_p(\beta^*)=\theta_p\left(e_{k_1}e_{k_2}\cdots e_{k_t}\right) \ast \theta_p\left(e^*_{s_1}e^*_{s_2}\cdots e^*_{s_t}\right) \\
	&=\theta_p\left(e_{k_1}\right) \ast \theta_p\left(e_{k_2}\right) \ast \cdots \theta_p\left(e_{k_t}\right) \ast \theta_p\left(e^*_{s_1}\right) \ast \theta_p\left(e^*_{s_2}\right)\ast\cdots\ast \theta_p\left(e^*_{s_t}\right)\\
	&=e_{k_1}\ast e_{k_2}\ast\cdots \ast e_{k_t} \ast e^*_{s_1}\ast e^*_{s_2}\ast\cdots\ast e^*_{s_t}\\
	&=e_{k_1}e_{k_2}\cdots  e_{k_t}  e^*_{s_1}e^*_{s_2}\cdots e^*_{s_t} \quad \textnormal{(since $ \phi_{U_p}(e_{k_i})=e_{k_i}$, $ \phi_{U_p}(e^*_{s_i})=e^*_{s_i}$)}\\
	&=\alpha\beta^*,
	\end{align*}
	and so $p=\theta_p(p)\in \textnormal{Im}\left(\theta_P\right)$. This shows that all entries of both $U_p$ and $U_p^{-1}$ lie in $\textnormal{Im}(\theta_P)$. By Corollary \ref{varphiP=P}, $\theta_p$ is an isomorphism, thus finishing the proof.
\end{proof}

By Corollary \ref{Cuntz-IsoofLA}, for any $u\in U(L_K(R_n)_0)$, there exists a unique graded endomorphism $f_u$ of $L_K(R_n)$ such that $f_u(v) =v$, $f_u(e_i)= ue_i$ and $f_u(e^*_i) = e^*_i u^{-1}$ for all $1\le i\le n$. Moreover, $f_u$ is a graded automorphism if and only if $u^{-1} = f_u(w)$ for some $w\in U(L_K(R_n)_0)$.
In this case, by Corollary \ref{Cuntz-IsoofLA} and Theorem \ref{ZhTwisotoLeAl}, there exists a graded injective homomorphism $$\theta_u:= \theta_{P}: L_K(R_n)\longrightarrow L_K(R_n)^{f_u}$$ of $K$-algebras satisfying $\theta_u(v) = v$, $\theta_u(e_i) = e_i$ and $\theta_u(e^*_i) = e^*_iw^{-1}$ for all $1\le i\le n$, where $P = (e^*_i ue_j)$.	For a positive integer $m$, we always have
$$u_m:=\varphi^{m-1}_u\left(u\right)\cdots f_u\left(u\right)u\in U(L_K(R_n)_0) \text{ and } u^{-1}_m =  u^{-1}f_u(u^{-1})\cdots f^{m-1}_u(u^{-1}).$$ We denote $$P_m := P\phi_P(P)\cdots \phi_P^{m-1}(P)=Pf_u(P)\cdots f_u^{m-1}(P).$$ Moreover, we have the following useful fact.

\begin{lem}\label{P-m}
$P_m=(e_i^*u_me_j)_{1\leq i,j\leq n}$ and $P^{-1}_m=(e_i^*u^{-1}_me_j)_{1\leq i,j\leq n}$ for all $m\ge 1$.
\end{lem}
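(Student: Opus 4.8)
The plan is to leverage two facts established earlier: the algebra isomorphism $\psi\colon L_K(R_n)\to M_n(L_K(R_n))$, $\psi(s)=(e_i^*se_j)$, from Remark \ref{Unit-Ggrg}, and the description $f_u=\phi_P$ with $P=\psi(u)$ from Corollary \ref{Cuntz-IsoofLA} (1). The core of the argument is the single identity
$$f_u^{m}=f_{u_m}\qquad(m\ge 1),$$
which collapses the $m$-fold self-composition of $f_u$ into one automorphism attached to the unit $u_m$. Once this is in hand, I can read off $P_m$ by comparing two descriptions of the matrix attached to $f_u^{m}$.

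First I would prove $f_u^{m}=f_{u_m}$ by induction on $m$. The base case $m=1$ is immediate since $u_1=u$. For the inductive step, I apply the multiplication rule $f_uf_w=f_{f_u(w)u}$ of Corollary \ref{Cuntz-IsoofLA} (3) with $w=u_{m-1}$, obtaining $f_u^{m}=f_uf_{u_{m-1}}=f_{f_u(u_{m-1})u}$; a direct check that $f_u(u_{m-1})u=f_u^{m-1}(u)\cdots f_u(u)u=u_m$ then closes the induction.

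Next I would compute the matrix attached to $f_u^{m}$ in two ways. On one hand $f_u=\phi_P$, so the power formula $\phi_P^{m}=\phi_{P_m}$ of Theorem \ref{Iso} (3) gives $f_u^{m}=\phi_{P_m}$. On the other hand $f_u^{m}=f_{u_m}=\phi_{\psi(u_m)}$ by Corollary \ref{Cuntz-IsoofLA} (1). Hence $\phi_{P_m}=\phi_{\psi(u_m)}$, and the injectivity of $P\mapsto\phi_P$ (Theorem \ref{Iso} (2)) forces $P_m=\psi(u_m)=(e_i^*u_me_j)$, which is the first assertion. For the second assertion, since $\psi$ is an algebra isomorphism and $u_m\in U(L_K(R_n)_0)$, I get $P_m^{-1}=\psi(u_m)^{-1}=\psi(u_m^{-1})=(e_i^*u_m^{-1}e_j)$.

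I expect the only genuine obstacle to be the bookkeeping in the inductive identity $f_u(u_{m-1})u=u_m$, that is, tracking the order of the factors $f_u^{k}(u)$ correctly when peeling off one application of $f_u$. Everything else is a formal consequence of the monoid isomorphism $\Phi$ together with the isomorphism $\psi$, so no essentially new computation is needed.
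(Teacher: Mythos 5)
Your proof is correct, but it takes a genuinely different route from the paper's. The paper proves the lemma by a computational double induction: it first establishes the auxiliary identity $f_u^m(e_i^*ue_j)=e_i^*u_m^{-1}u_{m+1}e_j$ by induction on $m$, and then computes the entries of $P_m=P_kf_u^k(P)$ directly, contracting the sum $\sum_t e_te_t^*=v$ to get $e_i^*u_ku_k^{-1}u_{k+1}e_j=e_i^*u_{k+1}e_j$. You instead isolate the structural identity $f_u^m=f_{u_m}$ (proved in one line from the composition law $f_uf_w=f_{f_u(w)u}$ of Corollary \ref{Cuntz-IsoofLA}~(3), with the factor-ordering check $f_u(u_{m-1})u=f_u^{m-1}(u)\cdots f_u(u)u=u_m$ going through exactly because $u_m$ is defined with descending exponents), and then transport it through the two faithful correspondences already available: the injective map $P\longmapsto\phi_P$ and the algebra isomorphism $\psi$ of Remark \ref{Unit-Ggrg}, which also disposes of the inverse formula for free via $\psi(u_m)^{-1}=\psi(u_m^{-1})$ (legitimate, since $f_u(v)=v$ makes each $f_u^k(u)$ a unit, so $u_m\in U(L_K(R_n)_0)$). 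Your argument is cleaner and exhibits the lemma as a formal consequence of the monoid isomorphism machinery, with no entrywise matrix bookkeeping; the paper's computation is more self-contained and simultaneously records the companion identity for $f_w$, though since your version of the lemma applies to an arbitrary unit, the formulas for $Q_m$ and $Q_m^{-1}$ needed in Corollary \ref{theta-u} follow from it by substituting $w$ for $u$. One small citation adjustment: for the rose graph the power formula $\phi_P^m=\phi_{P_m}$ and the injectivity of $P\longmapsto\phi_P$ are most directly Proposition \ref{IsoofLA}~(2) and the monoid isomorphism of Proposition \ref{IsoofLA}~(4), rather than Theorem \ref{Iso}~(3) and (2) verbatim (the latter require the hypothesis $|s^{-1}(v)|=n$, which of course holds here), but this is cosmetic.
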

\begin{proof}
We first claim that 
\begin{center}
	$f_u^m(e_i^*ue_j)=e_i^*u^{-1}_mu_{m+1}e_j$ and $f_w^m(e_i^*we_j)=e_i^*w^{-1}_mw_{m+1}e_j$	
\end{center}
for all $m\geq 1$ and $1\leq i,j\leq n$. We use
induction on $m$ to establish the claim. If $m=1$, then $f_u(e_i^*ue_j)=e_i^*u^{-1}f_u(u)ue_j=e_i^*u^{-1}_1u_{2}e_j$ and $f_w(e_i^*we_j)=e_i^*w^{-1}f_w(w)we_j=e_i^*w^{-1}_1w_{2}e_j$ for all $1\leq i,j\leq n$. 

Now we proceed inductively, that means, we have  $f_u^m(e_i^*ue_j)=e_i^*u^{-1}_mu_{m+1}e_j$ and $f_w^m(e_i^*we_j)=e_i^*w^{-1}_mw_{m+1}e_j$ for all $ 1\leq m\leq k$. For $m=k+1$, we have
\begin{align*}
f_u^m(e_i^*ue_j)&=f_u^{k+1}(e_i^*ue_j)=f_u(f_u^{k}(e_i^*ue_j))\\
& =f_u(e_i^*u^{-1}_ku_{k+1}e_j)=e_i^*u^{-1}f_u(u^{-1}_k)f_u(u_{k+1})ue_j\\
&=e_i^*\left( u^{-1}f_u(u^{-1}_k)\right) \left(f_u(u_{k+1})u\right) e_j\\
&=e_i^*u^{-1}_{k+1}u_{k+2}e_j=e_i^*u^{-1}_{m}u_{m+1}e_j.
\end{align*}
Similarly, we also have $f_w^m(e_i^*we_j)=e_i^*w^{-1}_{m}w_{m+1}e_j$, showing the claim.

We next show the lemma by using induction on $m$. If $m=1$, the statement is obvious. Now we proceed inductively, that means, we have $P_m=(e_i^*u_me_j)_{1\leq i,j\leq n}$ and $P^{-1}_m=(e_i^*u^{-1}_me_j)_{1\leq i,j\leq n}$ for all $1 \leq m\leq k$. For $m=k+1$, we obtain that
$P_m=P_{k+1}=Pf_u(P)\cdots f_u^{k-1}(P)f_u^{k}(P)=P_k f_u^{k}(P)$. By the induction hypothesis, $P_k=(e_i^*u_ke_j)_{1\leq i,j\leq n}$. By the above claim, we have  $$f_u^{k}(P)=\left(f_u^{k}(e_i^*ue_j)\right)_{1\leq i,j\leq n} =\left( e_i^*u^{-1}_ku_{k+1}e_j\right)_{1\leq i,j\leq n}.$$ 
Write $P_m = (p^{(m)}_{i,j})_{1\leq i,j\leq n}$. We then have

\begin{align*}
p^{(m)}_{i,j}&=\sum_{t=1}^n\left( e_i^*u_ke_t\right) \left( e_t^*u_k^{-1}u_{k+1}e_j\right)=e_i^*u_k\left( \sum_{t=1}^n e_t e_t^*\right) u_k^{-1}u_{k+1}e_j\\&=e_i^*u_ku_k^{-1}u_{k+1}e_j=e_i^*u_{k+1}e_j=e_i^*u_{m}e_j
\end{align*}
for all $1\le i, j\le n$, and so $P_m=(e_i^*u_me_j)_{1\leq i,j\leq n}$ and $P^{-1}_m=(e_i^*u^{-1}_me_j)_{1\leq i,j\leq n}$ for all $m\geq 1$, thus finishing the proof.	
\end{proof}

As a corollary of Theorem \ref{ZhTwisotoLeAl}, we obtain a criterion for the Zhang twist $L_K(R_n)^{f_u}$ of $L_K(R_n)$ by a graded automorphism $f_u$ to be isomorphic to $L_K(R_n)$. 

\begin{cor}\label{theta-u}
Let $n\ge 2$ be a positive integer, $K$ a field and $R_n$ the rose with $n$ petals. Let $u$ be an element of $U(L_K(R_n)_0)$ such that  $u^{-1} = f_u(w)$ for some $w\in U(L_K(R_n)_0)$.
Then the following statements hold:
	
$(1)$ The $K$-algebra homomorphism $\theta_u: L_K(R_n) \longrightarrow L_K(R_n)^{f_u}$, defined by $v\longmapsto v$, $e_i\longmapsto e_i$ and $e^*_i\longmapsto e^*_iw^{-1}$ for all $1\le i\le n$, is an isomorphism if and only if $e_i^*u^{-1}_me_j$, $e_i^*w_me_j$,  $e_i^*w^{-1}_me_j \in \textnormal{Im}(\theta_u)$ for all $m\ge 1$ and $1\leq i,j\leq n$.
	
$(2)$ If, in addition, $f_u(u) = u$, then $\theta_u$ is an isomorphism if and only if $e_i^*ue_j, e_i^*u^{-1}e_j\in \textnormal{Im}(\theta_u)$ for all $1\leq i,j\leq n$.
\end{cor}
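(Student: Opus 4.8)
The plan is to read off both statements from Theorem \ref{ZhTwisotoLeAl} after translating its matrix-entry conditions into the unit-theoretic language supplied by Corollary \ref{Cuntz-IsoofLA} and Lemma \ref{P-m}. First I would fix the identifications. By Corollary \ref{Cuntz-IsoofLA} (1), $f_u = \phi_P$ with $P = (e_i^*ue_j)$ and $P^{-1} = (e_i^*u^{-1}e_j)$, and likewise $f_w = \phi_Q$ with $Q = (e_i^*we_j)$ and $Q^{-1} = (e_i^*w^{-1}e_j)$. The hypothesis $u^{-1} = f_u(w)$ together with Corollary \ref{Cuntz-IsoofLA} (4) gives $\phi_P^{-1} = \phi_Q$, hence $P\phi_P(Q) = I_n$, so we are exactly in the setting of Theorem \ref{ZhTwisotoLeAl} and $\theta_u = \theta_P$ is the map of Corollary \ref{ZhangTw-subal}. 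Since $f_w = f_u^{-1}$ is again a graded automorphism fixing $v$, Corollary \ref{Cuntz-IsoofLA} (4) applied to $f_w$ (whose inverse is $f_u$) yields $w^{-1} = f_w(u)$; hence Lemma \ref{P-m} applies verbatim to the pair $(u,w)$ and, with the roles of $u$ and $w$ interchanged, to $(w,u)$. This produces $P^{-1}_m = (e_i^*u^{-1}_me_j)$, $Q_m = (e_i^*w_me_j)$ and $Q^{-1}_m = (e_i^*w^{-1}_me_j)$ for every $m\ge 1$.

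Item $(1)$ is then a pure substitution. Theorem \ref{ZhTwisotoLeAl} asserts that $\theta_P$ is an isomorphism if and only if $p^{(-m)}_{ij},\ q^{(m)}_{ij},\ q^{(-m)}_{ij}\in \textnormal{Im}(\theta_P)$ for all $m\ge 1$ and all $i,j$; under the identifications above these entries are precisely $e_i^*u^{-1}_me_j$, $e_i^*w_me_j$ and $e_i^*w^{-1}_me_j$, which is the stated criterion. No genuine difficulty arises in this part.

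For item $(2)$ I would first use $f_u(u)=u$ to simplify the data. Remark \ref{rem-auto} (2) turns this into $\phi_P(P)=P$, so Proposition \ref{IsoofLA} (3) makes $\phi_P$ an automorphism with $\phi_P^{-1}=\phi_{P^{-1}}$ and $\phi_P^m = \phi_{P^m}$; by the uniqueness in Corollary \ref{Cuntz-IsoofLA} this forces $w = u^{-1}$, $Q = P^{-1}$, and $P_m = P^m$, $P^{-1}_m = P^{-m}$. Feeding this into item $(1)$, the three families collapse, since $u_m = u^m$, $u^{-1}_m = w_m = u^{-m}$ and $w^{-1}_m = u^m$, to the single requirement $e_i^*u^me_j,\ e_i^*u^{-m}e_j\in \textnormal{Im}(\theta_u)$ for all $m\ge 1$. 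The last move is to drop the dependence on $m$, exactly as in the proof of Corollary \ref{varphiP=P}: because $u\in L_K(R_n)_0$, every entry of $P$ and $P^{-1}$ has degree $0$, so on these entries the twisted product $\ast$ coincides with the ordinary product, and $P^m$ (respectively $P^{-m}$) is literally the $m$th $\ast$-power of $P$ (respectively $P^{-1}$) in $M_n(L_K(R_n)^{f_u})$. As $\textnormal{Im}(\theta_u)$ is a subalgebra of $L_K(R_n)^{f_u}$ closed under $\ast$, membership of the entries $e_i^*ue_j$ and $e_i^*u^{-1}e_j$ propagates to all powers, so the whole criterion reduces to its $m=1$ instance; the converse implication is immediate.

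The only genuinely delicate points I anticipate are twofold: first, justifying the symmetric application of Lemma \ref{P-m} to $(w,u)$, which rests on confirming $w^{-1}=f_w(u)$; and second, in item $(2)$, verifying that degree-$0$ matrices multiply identically under $\ast$ and the ordinary product, so that the ordinary powers $P^m$, $P^{-m}$ land in $\textnormal{Im}(\theta_u)$ as soon as the generators $e_i^*ue_j$, $e_i^*u^{-1}e_j$ do. Both are short but must be stated explicitly to license the passage from all $m$ to $m=1$.
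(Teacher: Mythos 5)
Your proposal is correct and follows essentially the same route as the paper: part $(1)$ is the same substitution of Lemma \ref{P-m} into Theorem \ref{ZhTwisotoLeAl} (your explicit check that $w^{-1}=f_w(u)$, licensing the symmetric application to $Q=(e_i^*we_j)$, is a point the paper leaves implicit), and part $(2)$ rests on $f_u(P)=P$ followed by Corollary \ref{varphiP=P}, exactly as in the paper. The only cosmetic difference is that the paper obtains $f_u(P)=P$ by the one-line computation $f_u(e_i^*ue_j)=e_i^*u^{-1}uue_j=e_i^*ue_j$ and then simply cites Corollary \ref{varphiP=P}, whereas you route through Remark \ref{rem-auto} $(2)$ and re-derive the degree-zero power-propagation argument that corollary already encapsulates.
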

\begin{proof}
(1) Let $P=(e_i^*ue_j)_{1\leq i,j\leq n}$ and $Q=(e_i^*we_j)_{1\leq i,j\leq n}$. By Lemma \ref{P-m}, we have  $P^{-1}_m=(e_i^*u^{-1}_me_j)_{1\leq i,j\leq n}$,
$Q_m=(e_i^*w_me_j)_{1\leq i,j\leq n}$ and $Q^{-1}_m=(e_i^*w^{-1}_me_j)_{1\leq i,j\leq n}$ for all $m\geq 1$. Then, by Theorem \ref{ZhTwisotoLeAl}, $\theta_u$ is an isomorphism if and only if $e_i^*u^{-1}_me_j$, $e_i^*w_me_j$,  $e_i^*w^{-1}_me_j \in \textnormal{Im}(\theta_u)$ for all $m\ge 1$ and $1\leq i,j\leq n$.

(2) Assume that $f_u(u)=u$. We have $f_u(e^*_iue_j)=e^*_iu^{-1}uue_j=e^*_iue_j$ for all $1\leq i,j\leq n$, and so $f_u(P)=P$. By Corollary \ref{varphiP=P}, $\theta_u$ is an isomorphism if and only if $e_i^*ue_j, e_i^*u^{-1}e_j\in \textnormal{Im}(\theta_u)$ for all $1\leq i,j\leq n$, thus finishing the proof.
\end{proof}

We end this section by presenting the following example which illustrates Corollary \ref{theta-u}. Particularly, we show that  homomorphisms $\theta_u$ are not an isomorphism in general.

\begin{exas}\label{exa-theta}
Let $K$ be a field and $R_2$, the rose graph with $2$ petals.

(1) Let $x = e_1e^*_2 + e_2e^*_1\in L_K(R_n)_0$. It is shown in Examples \ref{Example-auto} (1) that $x$ is a unit of $L_K(R_2)_0$ with $x^{-1} = x$.	We also  have the graded automorphism $f_x$ of $L_K(R_2)$ such that $f_x(v) =v$, $f_x(e_1) = e_2$, $f_x(e_2) = e_1$, $f_x(e_1^*)= e_2^*$ and $f_x(x) = x$, which yields a graded $K$-algebra homomorphism $\theta_x:  L_K(R_2) \longrightarrow L_K(R_2)^{f_x}$ such that $\theta_x(v) =v$, $\theta_x(e_1) = e_1$, $\theta_x(e_2) = e_2$, $\theta_x(e^*_1) = e^*_1x = e^*_2$ and $\theta_x(e^*_2) = e^*_2x = e^*_1$. We then have
\begin{center}
$e^*_1xe_1 =  e^*_2xe_2 = 0 \in \textnormal{Im}(\theta_x)$ and
$e^*_1xe_2 =  e^*_2xe_1 = v \in \textnormal{Im}(\theta_x).$	
\end{center}	
By Corollary \ref{theta-u} (2), we immediately obtain that $\theta_x$ is an isomorphism.\medskip

(2) Let $y = v + e_1^2(e^*_2)^2\in L_K(R_2)_0$. It is shown in Examples \ref{Example-auto} (2) that $y$ is a unit of $L_K(R_2)_0$ with $y^{-1} = v-e_1^2(e^*_2)^2$. We also  have the graded automorphism $f_y$ of $L_K(R_2)$ such that $f_y(v) =v$, $f_y(e_1) = e_1$, $f_y(e_2) = e_2 + e_1^2e^*_2$, $f_y(e_1^*)=e_1^* -e_1(e^*_2)^2$, $f_y(e_2^*)=e_2^*$ and  $f_y(y) = y$, which yields a graded $K$-algebra homomorphism $\theta_y:  L_K(R_2) \longrightarrow L_K(R_2)^{f_y}$ such that $\theta_y(v) =v$, $\theta_y(e_1) = e_1$, $\theta_y(e_2) = e_2$, $\theta_y(e^*_1) = e^*_1y = e^*_1 + e_1(e^*_2)^2$ and $\theta_y(e^*_2) = e^*_2y = e^*_2$. Moreover, we have $e^*_1ye_1 =  e^*_2ye_2 = e^*_1y^{-1}e_1 = e^*_2y^{-1}e_2= v\in \textnormal{Im}(\theta_y)$,
$e^*_2ye_1 =  e^*_2y^{-1}e_1 = 0 \in \textnormal{Im}(\theta_y)$, $e^*_1ye_2 = e_1e^*_2 = \theta_y(e_1e^*_2)\in \textnormal{Im}(\theta_y)$	and $e^*_1y^{-1}e_2 = -e_1e^*_2 = \theta_y(-e_1e^*_2)\in \textnormal{Im}(\theta_y)$. Then, by Corollary \ref{theta-u} (2), we immediately obtain that $\theta_y$ is an isomorphism.

We should note that $\theta_y$ is exactly the automorphism $\theta_{e_1e^*_2}$ of $L_K(R_2)$ introduced in Corollary \ref{Anick-varphiP=P}.\medskip

(3) Let $u = e_1e_2^*+e_2e_1^*+e_1^2e_2^*e_1^*\in L_K(R_2)_0$. 
It is shown in Examples \ref{Example-auto} (3) that $u$ is a unit of $L_K(R_2)_0$ with $u^{-1} = e_1e_2^*+e_2e_1^*-e_2e_1(e_2^*)^2$. We also have the graded automorphism $f_u$ of $L_K(R_2)$ such that $f_u(v) =v$, $f_u(e_1) = e_2+e_1^2e_2^*$, $f_u(e_2) = e_1$ $f_u(e_1^*)=e_2^*$, $f_u(e_2^*)=e_1^*-e_1(e_2^*)^2$
and $f_u(w) = u^{-1}$, where $w =e_1e_2^*+e_2e_1^*-e_2^2e_1^*e_2^*$ and $w^{-1} = e_1e_2^*+e_2e_1^* +e_1e_2(e_1^*)^2$. This yields a graded $K$-algebra homomorphism $\theta_u:  L_K(R_2) \longrightarrow L_K(R_2)^{f_u}$ such that $\theta_u(v) =v$, $\theta_u(e_1) = e_1$, $\theta_u(e_2) = e_2$, $\theta_u(e^*_1) = e^*_1w^{-1} = e^*_2 + e_2(e^*_1)^2$ and $\theta_u(e^*_2) = e^*_2w^{-1} = e^*_1$.\medskip

We claim that $\theta_u$ is not an isomorphism. Indeed, it suffices to show that $e^*_1w_2e_2 \notin \textnormal{Im}(\theta_u)$. We note that $e^*_1w_2e_2$ is exactly the $(1, 2)$-entry of $Q_2 = Q\phi_w(Q)$, where $Q = (e^*_iwe_j)=\begin{pmatrix}
0 & 1\\
1 & -e_2e_1^*
\end{pmatrix}$.

We have $f_w(v)=v$, $f_w(e_1)=we_1=e_2$, $f_w(e_2)=we_2=e_1-e_2^2e_1^*$, $f_w(e_1^*)=e_1^*w^{-1}=e_2^*+e_2e_1^{*2}$ and $f_w(e_2^*)=e_2^*w^{-1}=e_1^*$. We then obtain that
\begin{align*}
f_w(Q)&=f_w\begin{pmatrix}
0 & 1\\
1 & -e_2e_1^*
\end{pmatrix}=\begin{pmatrix}
0 & 1\\
1 & -e_1e_2^*-e_1e_2(e_1^*)^2+e_2^2e_1^*e_2^*
\end{pmatrix},
\end{align*}
and so \begin{align*}
Q_2&=Q f_w(Q)=\begin{pmatrix}
1 & -e_1e_2^*-e_1e_2(e_1^*)^2+e_2^2e_1^*e_2^*\\
-e_2e_1^* & 1+e_2e_2^*+e_2^2(e_1^*)^2
\end{pmatrix}.
\end{align*}
This implies that $$e^*_1w_2e_2 = -e_1e_2^*-e_1e_2(e_1^*)^2+e_2^2e_1^*e_2^*\in L_K(R_2)_0.$$

Assume that $-e_1e_2^*-e_1e_2(e_1^*)^2+e_2^2e_1^*e_2^* \in \textnormal{Im}(\theta_u)$, that means, $-e_1e_2^*-e_1e_2(e_1^*)^2+e_2^2e_1^*e_2^* = \theta_u(a)$ for some $a\in L_K(R_2)_0$. Then, $a$ may be written of the form:
\[a = e_1t_1e^*_1 + e_1t_2e^*_2 + e_2t_3e^*_1 + e_2t_4e^*_2,\]
where $t_i\in L_K(R_2)_0$ for all $1\le i\le 4$. We note that  $f_u(\theta_u(e_1^*))=f_u(e_2^*+e_2(e_1^*)^2)=e_1^*$ and $f_u(\theta_u(e_2^*))=f_u(e_1^*)=e_2^*$ for all $1\leq j\leq 2$. Therefore, we have
\begin{align*}
\theta_u(e_ise_j^*)&=\theta_u(e_i)\ast\theta_u(s)\ast\theta_u(e_j^*)=e_i\ast\theta_u(s)\ast\theta_u(e_j^*)=e_i\phi_u(\theta_u(s))\ast\theta_u(e_j^*)\\
&=e_if_u(\theta_u(s))f_u(\theta_u(e_j^*))=e_if_u(\theta_u(s))e_j^*
\end{align*}
for all $s\in L_K(R_2)_0$ and $1\le i, j\le 2$. This implies that
$$\theta_u(a)=e_1f_u(\theta_u(t_1))e_1^*+e_1f_u(\theta_u(t_2))e_2^*+e_2 f_u(\theta_u(t_3))e_1^*+e_2 f_u(\theta_u(t_4))e_2^*,$$ and so 
\begin{align*}
-e_2e_1^* &= e_1^*(-e_1e_2^*-e_1e_2e_1^{*2}+e_2^2e_1^*e_2^*)e_1=e_1^*\theta_u(a)e_1= f_u(\theta_u(t_1)),\\
-v &=e_1^*(-e_1e_2^*-e_1e_2e_1^{*2}+e_2^2e_1^*e_2^*)e_2=e_1^*\theta_u(a)e_2= f_u(\theta_u(t_2)),\\
0&=e_2^*(-e_1e_2^*-e_1e_2e_1^{*2}+e_2^2e_1^*e_2^*)e_1=e_2^*\theta_u(a)e_1= f_u(\theta_u(t_3)),\\
e_2e_1^*&=e_2^*(-e_1e_2^*-e_1e_2e_1^{*2}+e_2^2e_1^*e_2^*)e_2=e_2^*\theta_u(a)e_2= f_u(\theta_u(t_4)).
\end{align*}
Hence, $e_2e_1^*= f_u(\theta_u(-t_1))= f_u(\theta_u(t_4))$. Since $f_u$ and $\theta_u$ are injective, we must have $-t_1=t_4=:t$, $t_2=-v$ and $t_3=0$. Therefore, $$a=-e_1e_2^*-e_1te_1^*+e_2te_2^*.$$
As above we have shown $ e_2e_1^*=f_u(\theta_u(t))$. On the other hand, 
\begin{align*}
e_2&=f_u(e_1)-e_1^2e_2^*=f_u(e_1)-f_u(e_2^2)f_u(e_1^*)=f_u(e_1-e_2^2e_1^*)\\
e_1^*&=f_u(e_2^*)+e_1e_2^{*2}=f_u(e_2^*)+f_u(e_2)f_u(e_1^{*2})=f_u(e_2^*+e_2e_1^{*2}).
\end{align*}
So $e_2e_1^*=f_u(e_1e_2^*+e_1e_2e_1^{*2}-e_2^2e_1^*e_2^*)=f_u(-\theta_u(a))=f_u(\theta_u(-a))$. It follows that $f_u(\theta_u(t))=f_u(\theta_u(-a))$. By the injectivity of $f_u$ and $\theta_u$, $t=-a$, i.e., $$t=e_1e_2^*+e_1te_1^*-e_2te_2^*,$$
which yields that $(e_1^*)^mte_1^m=t$ for all $m\geq 1$. Since $t\in L_K(R_2)_0$, we write
$t=kv+\sum_{i=1}^{d}k_i\alpha_i\beta_i^*$, where $d\ge 0$, $k, k_i\in K$ and $\alpha_i, \beta_i \in (R_2)^*$ with $|\alpha_i|=|\beta_i|\ge 1$ for all $1\le i\le p$. We have $(e^*_1)^le^l_1 =v$ for all $l\ge 1$ and
\begin{align*}
(e_1^*)^{|\alpha_i|}(\alpha_i\beta^*_i)e_1^{|\alpha_i|}=\begin{cases}
v \quad \textnormal{if }\, \alpha_i=\beta_i^*=e_1^{|\alpha_i|},\\
0 \quad \textnormal{otherwise}\\
\end{cases}
\end{align*}
for all $1\le i\le d$. Hence, for $m = \max\{|\alpha_i|\mid 1\le i\le d\}$, we obtain that

$$t=(e_1^*)^mte_1^m=(e_1^*)^m(kv+\sum_{i=1}^{d}k_i\alpha_i\beta_i^*)e_1^m = c v$$ for some $c\in K$, and so $e^*_1te_2  = e^*_1(cv)e_2= c (e^*_1e_2) =0$.

On the other hand, we have $$e_1^*te_2= e_1^*(e_1e_2^*+e_1te_1^*-e_2te_2^*)e_2 = v,$$ and so $v =0$, a contradiction. Therefore, we must have $-e_1e_2^*-e_1e_2e_1^{*2}+e_2^2e_1^*e_2^*\notin \textnormal{Im}(\theta_u)$, thus $\theta_u$ is not an isomorphism.
\end{exas}

\section{Application: Irreducible representations of $L_K(R_n)$}	
\noindent The study of irreducible representations of Leavitt path algebras is still in its early stage. Chen in his remarkable paper \cite{c:irolpa} initiated the study of simple modules
over Leavitt path algebras. To understand his construction of simple modules, let us first recall some terminologies. Let $E$ be an arbitrary graph. An \textit{infinite path} $p:= e_1\cdots e_n\cdots$ in a graph $E$ is a sequence of edges $e_1, \hdots, e_n, \hdots $ such that $r(e_i) = s(e_{i+1})$ for all $i$. We denote by $E^{\infty}$ the set of all infinite paths in $E$.
For $p:= e_1\cdots e_n\cdots\in E^{\infty}$  and $n\ge 1$, Chen (\cite{c:irolpa}) defines $\tau_{> n}(p) = e_{n+1}e_{n+2}\cdots,$ and $\tau_{\le n}(p) = e_1e_2\cdots e_n$. Two infinite paths $p, q$ are said to be \textit{tail-equivalent} (written $p\sim q$) if there exist positive integers $m, n$ such that $\tau_{> n}(p) = \tau_{> m}(q)$. Clearly $\sim$ is an equivalence relation on $E^{\infty}$, and we let $[p]$ denote the $\sim$ equivalence class of the infinite path $p$.

Let $c$ be a closed path in $E$. Then the path $c c c\cdots$ is an infinite path in $E$, which we denote by $c^{\infty}$. Note that if $c$ and $d$ are closed paths in $E$ such that $c = d^n$, then $c^{\infty}=d^{\infty}$ as elements of $E^{\infty}$. The infinite path $p$ is called \textit{rational} in case $p\sim c^{\infty}$ for some closed path $c$. If $p\in E^{\infty}$ is not rational we say $p$ is \textit{irrational}. We denote by  $E^{\infty}_{irr}$ the set of irrational paths in $E$.

Given a field $K$ and an infinite path $p$, Chen (\cite{c:irolpa}) defines $V_{[p]}$ to be the $K$-vector space having $\{q\in E^{\infty}\mid q\in [p]\}$ as a basis, that is, having basis consisting of distinct elements of $E^{\infty}$ which are tail-equivalent to $p$. $V_{[p]}$ is made a left $L_K(E)$-module by defining, for all $q\in [p]$ and all $v\in E^0$, $e\in E^1$, 

$1)$ $v\cdot q = q$ or $0$ according as $v = s(q)$ or not;

$2)$ $e \cdot q = eq$ or $0$ according as $r(e) = s(q)$ or not;

$3)$ $e^* \cdot q = \tau_1(q)$ or $0$ according as $q = e\tau_1(q)$ or not.\\ 
In \cite[Theorem 3.3]{c:irolpa} Chen showed that $V_{[p]}$ is a simple left $L_K(E)$-module; and  $V_{[p]} \cong V_{[q]}$ if and only if $p \sim q$, which happens precisely when $V_{[p]} = V_{[q]}$. 

Let  $c= e_1\cdots e_t$ be a closed path in $E$ based at $v$ and $f(x) = a_0 + a_1 x + \cdots + a_n x^n$ a polynomial in $K[x]$. We denote by $f(c)$ the element \[f(c):= a_0v + a_1c + \cdots + a_nc^n\in L_K(E).\] 
We denote by $\text{Irr}(K[x])$ the set of all irreducible polynomials in $K[x]$ written in the form $1 - a_1x - \cdots - a_n x^n$ and by $\Pi_c$ the set of all the following closed paths $c_1 := c,\, c_2:= e_2\cdots e_t e_1,\, \ldots, \, c_n := e_n e_1\cdots e_{n-1}$. In \cite[Theorems 4.3 and 4.7]{anhnam} \'{A}nh and the first author proved that for any irreducible polynomial $f\in\text{Irr}(K[x])$, the all cyclic left $L_K(E)$-modules $S^f_{c_i}$ generated by $z$ subject to $z= (a_1c_i + \cdots + a_nc^n_i)z$ ($1\le i\le n$), are both simple and isomorphic to each other, and define a simple $L_K(E)$-module $S^f_{\Pi_c}$. 

In \cite{kn:ataanirolpa} Kuroda and the first author constructed additional classes of simple $L_K(R_n)$-modules  by studying the twisted modules of the simple modules $S^f_{c}$ under Anick
type automorphisms of $L_K(R_n)$ mentioned in Corollary \ref{Anick-IsoofLA}, where $R_n$ is the rose graph with $n$ petals.

For any integer $n\ge 2$, we denote by $C_s(R_n)$ the set of simple closed paths of the form $c = e_{k_1}e_{k_2} \cdots e_{k_m}$, where $k_i\in \{1, 3, \ldots, n\}$ for all $1\le i\le m-1$ and $k_m =2$, in $R_n$. For any $c\in C_s(R_n)$, $p\in A_{R_n}(e_1, e_2)$ and $f\in \text{Irr}(K[x])$, we have a left $L_K(R_n)$-module $S^{f,\, p}_{c}$,  which is the twisted module $(S^f_{c})^{\phi_p}$, where $\phi_p$  is the automorphism of $L_K(R_n)$ defined in Corollary~\ref{Anick-IsoofLA}. By \cite[Theorem 3.6]{kn:ataanirolpa}, the $L_K(R_n)$-module $S^{f,\, p}_{c}$ is always simple.

For each pair $(f, c) \in \text{Irr}(K[x])\times C_s(R_n)$, we define an equivalence relation $\equiv_{f, c}$ on $A_{R_n}(e_1, e_n)$ as follows.
For all $p, q\in A_{R_n}(e_1, e_n)$, $p\equiv_{f, c} q$ if and only if $p-q = rf(c)$ for some $r\in L_K(R_n)$. We denote by $[p]$ the $\equiv_{f, c}$ equivalence class of $p$. The following theorem provides us with a list of some classes of pairwise non-isomorphic simple $L_K(R_n)$-modules.

\begin{thm}[{\cite[Theorem 3.8]{kn:ataanirolpa}}]\label{Irrrep1} 
	Let $K$ be a field, $n\ge 2$ a positive integer, and $R_n$ the rose graph with $n$ petals. Then, the following set 
	$$\{V_{[\alpha]}\mid \alpha\in (R_n)^{\infty}_{irr}\} \sqcup \{S^f_{\Pi_c}\mid c\in SCP(R_n),\, f \in \textnormal{Irr}(K[x])\}\, \sqcup$$
	\[\sqcup\, \{S^{f,\, p}_{d} \mid d\in C_s(R_n),\, f \in \textnormal{Irr}(K[x]),\, [0]\neq [p]\in A_{R_n}(e_1, e_2)/\equiv_{f, d}\}\]
	consists of pairwise non-isomorphic simple left $L_K(R_n)$-modules.	
\end{thm}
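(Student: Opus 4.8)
The plan is to separate the three families by a coarse homological invariant and then to pin down isomorphisms within and between them by reducing every comparison to the known isomorphism criteria for Chen modules and for \'{A}nh--Nam modules. That all listed modules are simple is already available: simplicity of $V_{[\alpha]}$ is Chen's \cite[Theorem 3.3]{c:irolpa}, of $S^f_{\Pi_c}$ is \cite[Theorems 4.3 and 4.7]{anhnam}, and of $S^{f,\,p}_d=(S^f_d)^{\phi_p}$ follows from \cite[Theorem 3.6]{kn:ataanirolpa} (equivalently, a twist of a simple module by an automorphism is simple). So the entire content is the pairwise non-isomorphism.

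First I would isolate the first family by finite presentation. For irrational $\alpha$ the Chen module $V_{[\alpha]}$ is \emph{not} finitely presented, whereas $S^f_{\Pi_c}$ is a cyclic module defined by the single relation $f(c)z=0$ and hence finitely presented, and finite presentation is preserved by twisting with an automorphism, so each $S^{f,\,p}_d$ is finitely presented as well (see \cite{ar:fpsmolpa}). As finite presentation is an isomorphism invariant, no member of $\{V_{[\alpha]}\}$ is isomorphic to a member of the other two families. Within the first family, Chen's theorem gives $V_{[\alpha]}\cong V_{[\beta]}$ iff $\alpha\sim\beta$, so the distinct elements of $\{V_{[\alpha]}\mid\alpha\in(R_n)^{\infty}_{irr}\}$ are pairwise non-isomorphic; within the second family, the isomorphism criterion of \cite{anhnam} gives $S^f_{\Pi_c}\cong S^g_{\Pi_{c'}}$ iff $f=g$ and $\Pi_c=\Pi_{c'}$, settling those pairs.

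The heart of the argument is to compare the second and third families (both finitely presented) and to separate the members of the third family from one another. Here I would exploit that $p\longmapsto\phi_{U_p}=:\phi_p$ is a group homomorphism $(A_{R_n}(e_1,e_2),+)\longrightarrow Aut(L_K(R_n))$ by Corollary \ref{Anick-IsoofLA}, so $\phi_p\phi_q=\phi_{p+q}$ and $\phi_p^{-1}=\phi_{-p}$. Since twisting by $\phi_q$ is a self-equivalence of the category of left $L_K(R_n)$-modules and $(M^{\phi_p})^{\phi_q}=M^{\phi_{p+q}}$, applying $(-)^{\phi_{-q}}$ reduces every comparison to one against an untwisted module:
\begin{equation*}
S^{f,p}_d \cong S^{g,q}_{d'} \iff (S^f_d)^{\phi_{p-q}} \cong S^g_{d'} \iff S^{f,\,p-q}_d \cong S^g_{\Pi_{d'}} .
\end{equation*}
Thus it suffices to determine when a twisted \'{A}nh--Nam module $S^{f,r}_d$ is isomorphic to an untwisted $S^g_{\Pi_{d'}}$. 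The decisive claim, which is exactly what the relation $\equiv_{f,d}$ is designed to detect, is that this occurs precisely when $g=f$, $d'$ is a cyclic rotation of $d$, and $r\equiv_{f,d}0$, i.e. $[r]=[0]$. Granting this, taking $q=0$ shows the hypothesis $[p]\neq[0]$ keeps the third family disjoint from the second, and two members $S^{f,p}_d,S^{g,q}_{d'}$ of the third family are isomorphic only when $f=g$, $d$ and $d'$ are cyclic rotations, and $[p]=[q]$ — which is exactly how the index set is parametrized — thereby completing the verification of Theorem \ref{Irrrep1}.

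The main obstacle is this last criterion, $S^{f,r}_d \cong S^g_{\Pi_{d'}} \iff \big(g=f,\ d'\ \text{a cyclic rotation of}\ d,\ [r]=[0]\big)$. The direction assuming $[r]=[0]$ is routine once $r\equiv_{f,d}0$ is rewritten as an explicit change of cyclic generator that absorbs the twist; the substantive direction is that a nontrivial twist ($[r]\neq[0]$) can never be undone. I expect to argue this by transporting an isomorphism $S^{f,r}_d\to S^g_{\Pi_{d'}}$ to the cyclic generators, comparing the two defining relations in $L_K(R_n)$ after applying $\phi_{-r}$, and using the explicit basis of $A_{R_n}(e_1,e_2)$ recorded before Corollary \ref{Anick-IsoofLA} together with the $\mathbb Z$-grading to show that the twist contribution $e_1 r$ cannot be cancelled unless $r$ lies in the left ideal generated by $f(d)$. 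This is essentially the computation underlying \cite[Theorem 3.6]{kn:ataanirolpa}, and the proof then amounts to assembling it with Chen's and \'{A}nh--Nam's isomorphism criteria.
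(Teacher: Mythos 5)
A preliminary remark on the comparison you asked for: this paper contains no proof of Theorem \ref{Irrrep1} at all --- it is imported verbatim as \cite[Theorem 3.8]{kn:ataanirolpa} --- so your attempt can only be measured against the cited reference, not against anything internal to this article. Your overall architecture is sound and matches how the surrounding machinery is actually assembled here (compare the proof of Theorem \ref{Irrrep7-irr}): simplicity is quoted from the literature; the family $\{V_{[\alpha]}\}$ is split off because finite presentation is an isomorphism invariant preserved by twisting (your justification is fine, since $\sigma\colon {}_AA\to {}_A(A^{\sigma})$ is an isomorphism, though your citation of \cite{ar:fpsmolpa} for this is misplaced --- that paper treats graphs in which every vertex lies on at most one cycle, which excludes $R_n$ for $n\ge 2$; the correct reference, used in this paper, is \cite[Theorem 3.6 (5)]{kn:ataanirolpa}); Chen's tail-equivalence criterion and the \'{A}nh--Nam criterion settle the first two families; and the group homomorphism $p\longmapsto \phi_{U_p}$ of Corollary \ref{Anick-IsoofLA}, together with $(M^{\sigma})^{\tau}=M^{\sigma\tau}$, legitimately reduces every remaining comparison to the single question of when $S^{f,r}_d\cong S^g_{\Pi_{d'}}$.

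The genuine gap is that this single question is the entire nontrivial content of the theorem, and you assert its answer rather than prove it. Concretely, writing $L:=L_K(R_n)$, one has $S^{f,r}_d\cong L/L\,f(\phi_{U_{-r}}(d))$ and $S^g_{\Pi_{d'}}\cong L/L\,g(d')$, and an isomorphism only transports the cyclic generator to \emph{some} element $\bar{u}$ subject to $f(\phi_{U_{-r}}(d))\,u\in L\,g(d')$ and $u\notin L\,g(d')$; to prove your criterion one must exclude every such $u$ unless $f=g$, $d'\in \Pi_d$ and $r\in L\,f(d)$. Your proposed tools cannot carry this as stated: the element $f(\phi_{U_{-r}}(d))$ is not homogeneous, so the $\mathbb{Z}$-grading alone forces nothing, and $u$ ranges over all of $L$, not over $A_{R_n}(e_1,e_2)$, so the basis of $A_{R_n}(e_1,e_2)$ recorded before Corollary \ref{Anick-IsoofLA} is the wrong normal form --- the computation has to be done in a basis of the whole Leavitt path algebra. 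Nor can you discharge the claim by citing \cite[Theorem 3.6]{kn:ataanirolpa}: as used in this paper, that theorem records simplicity, $\End_{L}(S^{f,p}_d)\cong K[x]/K[x]f(x)$ and finite presentation, not an isomorphism classification --- if it contained your criterion, Theorem \ref{Irrrep1} would be an immediate corollary and there would be nothing left to prove, which is precisely why \cite[Theorem 3.8]{kn:ataanirolpa} is a separate theorem there. (Also note that $\End\cong K[x]/(f)$ cannot substitute for the missing argument even to force $f=g$, since non-equal irreducible polynomials can have isomorphic quotient fields.) One small point that does come out in your favor but was left implicit: distinct elements of $C_s(R_n)$ are never nontrivial rotations of one another, since each contains exactly one occurrence of $e_2$ and it must be terminal; your parametrization argument silently uses this.
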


The remainder of this section is to investigate the twisted modules $(V_{[\alpha]})^{\phi_P}$ of the simple $L_K(R_n)$-modules $V_{[\alpha]}$ by graded automorphisms $\phi_P$ mentioned in Corollary \ref{gr-IsoofLA}, where $p$ is an infinite path in $R_n$ and $P\in GL_n(K)$. For convenience, we denote  $$V^P_{[\alpha]} := (V_{[\alpha]})^{\phi^{-1}_P} = (V_{[\alpha]})^{\phi_{P^{-1}}}$$ 
for any $\alpha\in (R_n)^{\infty}$ and $P\in GL_n(K)$. Denoting by $\cdot$ the module operation in $V^P_{[\alpha]}$, we have $v\cdot \beta = \phi^{-1}_P(v)\beta = v\beta =\beta$, 

\[e_i \cdot \beta = \phi^{-1}_{P}(e_i)\beta = \phi_{P^{-1}}(e_i)\beta = (\sum^n_{t=1}p'_{ti}e_t)\beta \] and
\[e^*_i \cdot \beta = \phi^{-1}_P(e^*_i)\beta = \phi_{P^{-1}}(e^*_i)\beta = (\sum^n_{t=1}p_{it}e^*_t)\beta \ \ \text{in } V_{[\alpha]}\]
for all $\beta\in [\alpha]$ and $1\le i\le n$, where $P= (p_{ij})$ and $P^{-1} = (p'_{ij})\in GL_n(K)$.

We note that the symmetric group $S_n$ acts on the set $(R_n)^{\infty}$ by setting: $$(\sigma, p = e_{i_1}e_{i_2}\cdots e_{i_m}\cdots)\longmapsto \sigma\cdot p = e_{\sigma(i_1)}e_{\sigma(i_2)}\cdots e_{\sigma(i_m)}\cdots$$ for all $\sigma\in S_n$ and $p= e_{i_1}e_{i_2}\cdots e_{i_m}\cdots\in (R_n)^{\infty}$. The orbit of $p$ is the set $\{\sigma\cdot p\mid \sigma\in S_n\}$ and denoted by
$S_n\cdot p$. The set of orbits of points $p$ in $(R_n)^{\infty}$ under the action of $S_n$ form a partition of $(R_n)^{\infty}$. The associated equivalence relation is defined by saying $p\sim q$ if and only of there exists an element $\sigma\in S_n$ such that $q =\sigma\cdot p$. Moreover, we have that $(R_n)^{\infty}_{irr}$ is an invariant subset of $(R_n)^{\infty}$, that means, $$S_n\cdot (R_n)^{\infty}_{irr} := \{\sigma\cdot p\mid p\in (R_n)^{\infty}_{irr}\} = (R_n)^{\infty}_{irr}.$$ 
We denote by $(R_n)^{\infty}_{irr-eeri}$ the set of all irrational paths $p= e_{i_1}e_{i_2}\cdots e_{i_m}\cdots$ such that each edge is repeated infinitely many times in the path, that is, \[|\{m\in \mathbb{N}\mid e_{i_m} = e_j\}| = \infty\] for all $1\le j\le n$. It is not hard to see that  $(R_n)^{\infty}_{irr-eeri}$ is an invariant subset of $(R_n)^{\infty}$, and $(R_2)^{\infty}_{irr-eeri} = (R_2)^{\infty}_{irr}$.

We also have a group action of $S_n$ on the general linear group $GL_n(K)$ defined by: \[(\sigma, A= [a_1\ a_2\ \cdots\ a_n])\longmapsto \sigma\cdot A := [a_{\sigma(1)}\ a_{\sigma(2)}\ \cdots\ a_{\sigma(n)}]\] for all $\sigma\in S_n$ and $A= [a_1\ a_2\ \cdots\ a_n]\in GL_n(K)$, where $a_j$ is the $j^{\text{th}}$ column of $A$. In the following theorem, we describe simple $L_K(R_n)$-modules $V^P_{[\alpha]}$ associated to pairs $(\alpha, P)\in (R_n)^{\infty}_{irr-eeri}\times GL_n(K)$.

\begin{thm}\label{Irrrep2-irr} 
	Let $K$ be a field, $n\ge 2$ a positive integer, and $R_n$ be the rose graph with $n$ petals. Let $P = (p_{ij})\in GL_n(K)$ be an arbitrary element and $\alpha=e_{i_1}e_{i_2}\cdots e_{i_m}\cdots\in  (R_n)^{\infty}_{irr-eeri}$. Then, the following statements hold:
	
	$(1)$ $V^P_{[\alpha]}$ is a simple left $L_K(R_n)$-module;
	
	$(2)$ $End_{L_K(R_n)}(V^P_{[\alpha]})\cong K$;
	
	$(3)$ $V^P_{[\alpha]}\cong L_K(R_n)/\bigoplus^{\infty}_{m=0}L_K(R_n)(\phi_P(\epsilon_m) - \phi_P(\epsilon_{m+1}))$, where $\epsilon_0 := v$, $\epsilon_m = e_{i_1}\cdots e_{i_m}e^*_{i_m}\cdots e^*_{i_1}$ for all $m\ge 1$, and the graded automorphism $\phi_P$ is defined in Corollary \ref{gr-IsoofLA}. Consequently, $V^P_{[\alpha]}$ is not finitely presented.
	
	$(4)$ For any $\beta\in (R_n)^{\infty}_{irr-eeri}$, $V_{[\beta]}\cong V^P_{[\alpha]}$ if and only if there exist an element $\sigma\in S_n$ and a diagonal matrix $D\in GL_n(K)$ such that $P = \sigma\cdot D$ and $\sigma\cdot \beta \sim \alpha$.
	
	$(5)$ For any $\beta\in (R_n)^{\infty}_{irr-eeri}$ and any $Q\in GL_n(K)$, $V^Q_{[\beta]}\cong V^P_{[\alpha]}$ if and only if there exist an element $\sigma\in S_n$ and a diagonal matrix $D\in GL_n(K)$ such that $Q^{-1}P = \sigma\cdot D$ and $\sigma\cdot\beta \sim \alpha $.
\end{thm}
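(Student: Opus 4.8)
The plan is to deduce statement $(5)$ from statement $(4)$ by applying the representation twist by $\phi_Q$ to both modules and reducing the matrix $Q$ to the identity. Recall that, by definition, $V^P_{[\alpha]} = (V_{[\alpha]})^{\phi^{-1}_P} = (V_{[\alpha]})^{\phi_{P^{-1}}}$ is the twisted module of $V_{[\alpha]}$ by the graded automorphism $\phi_{P^{-1}}$, and likewise $V^Q_{[\beta]} = (V_{[\beta]})^{\phi_{Q^{-1}}}$. For the representation twist one has the composition rule $(M^{\sigma})^{\tau} = M^{\sigma\circ\tau}$ for any automorphisms $\sigma,\tau$ of $L_K(R_n)$, since the action of $a$ on $(M^{\sigma})^{\tau}$ is $m\mapsto \sigma(\tau(a))m$.

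First I would record the composition facts for scalar matrices. For $A,B\in GL_n(K)$ the entries of $B$ lie in $K\cdot v$, so $\phi_A(B)=B$ because $\phi_A$ is a $K$-algebra homomorphism fixing $v$; hence, by Proposition \ref{IsoofLA} $(2)$,
\[
\phi_A\circ\phi_B \;=\; \phi_{A\star B}\;=\;\phi_{A\phi_A(B)}\;=\;\phi_{AB}.
\]
In particular $\phi_{Q^{-1}}\circ\phi_Q=\phi_{I_n}=\mathrm{id}$, so $\phi_{Q^{-1}}=\phi_Q^{-1}$, and $\phi_Q\in Aut^{gr}(L_K(R_n))$ by Corollary \ref{gl-IsoofLA}.

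Next I would use that the twist functor $(-)^{\phi_Q}$ is an autoequivalence of the category of left $L_K(R_n)$-modules, with quasi-inverse $(-)^{\phi_{Q^{-1}}}$: since $\phi_Q$ is bijective, a $K$-linear map $f$ is $L_K(R_n)$-linear as a map $M\to N$ if and only if it is $L_K(R_n)$-linear as a map $M^{\phi_Q}\to N^{\phi_Q}$. Hence $(-)^{\phi_Q}$ preserves and reflects isomorphisms, so $V^Q_{[\beta]}\cong V^P_{[\alpha]}$ if and only if $(V^Q_{[\beta]})^{\phi_Q}\cong(V^P_{[\alpha]})^{\phi_Q}$. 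Using the composition rule together with the facts above,
\[
(V^Q_{[\beta]})^{\phi_Q}=(V_{[\beta]})^{\phi_{Q^{-1}}\circ\phi_Q}=(V_{[\beta]})^{\mathrm{id}}=V_{[\beta]},
\]
\[
(V^P_{[\alpha]})^{\phi_Q}=(V_{[\alpha]})^{\phi_{P^{-1}}\circ\phi_Q}=(V_{[\alpha]})^{\phi_{P^{-1}Q}}=(V_{[\alpha]})^{\phi_{(Q^{-1}P)^{-1}}}=V^{Q^{-1}P}_{[\alpha]}.
\]
Thus $V^Q_{[\beta]}\cong V^P_{[\alpha]}$ if and only if $V_{[\beta]}\cong V^{Q^{-1}P}_{[\alpha]}$. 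Since $Q^{-1}P\in GL_n(K)$ and $\alpha,\beta\in(R_n)^{\infty}_{irr\text{-}eeri}$, statement $(4)$ applies with $Q^{-1}P$ in place of $P$, giving that this holds if and only if there exist $\sigma\in S_n$ and a diagonal matrix $D\in GL_n(K)$ with $Q^{-1}P=\sigma\cdot D$ and $\sigma\cdot\beta\sim\alpha$, which is exactly $(5)$.

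The substantive mathematics is entirely absorbed into statement $(4)$; once that is available, $(5)$ is a formal consequence. The only point requiring care — and the natural place for an inversion error to slip in — is the bookkeeping of the twist conventions: that $V^P_{[\alpha]}$ is the twist by $\phi_{P^{-1}}$ rather than $\phi_P$, that twists compose as $(M^{\sigma})^{\tau}=M^{\sigma\circ\tau}$, and that $(Q^{-1}P)^{-1}=P^{-1}Q$, so that the matrix surviving the reduction is precisely $Q^{-1}P$ and not $PQ^{-1}$ or $P^{-1}Q$.
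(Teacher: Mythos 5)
Your derivation of item (5) is correct, and it is in fact exactly the route the paper itself takes: the paper's proof of (5) is the chain of equivalences $V^Q_{[\beta]}\cong V^P_{[\alpha]} \Longleftrightarrow V_{[\beta]}\cong (V_{[\alpha]})^{\phi_{P^{-1}Q}} = V^{Q^{-1}P}_{[\alpha]}$, followed by an appeal to item (4). Your write-up is, if anything, more careful than the paper's: you justify $\phi_A\circ\phi_B=\phi_{AB}$ for scalar matrices via $\phi_A(B)=B$ and Proposition \ref{IsoofLA}~(2), verify the composition convention $(M^{\sigma})^{\tau}=M^{\sigma\circ\tau}$, and check explicitly that the twist functor preserves and reflects isomorphisms, all of which the paper leaves implicit.

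The genuine gap is one of scope: the statement to be proved is the whole of Theorem \ref{Irrrep2-irr}, items (1)--(5), and your proposal proves only (5), explicitly conditioned on (4). You note that ``the substantive mathematics is entirely absorbed into statement (4)'' --- which is true, and is precisely why assuming it leaves the main content unproven. The forward implication of (4) is the heart of the theorem and requires a real argument, which the paper supplies: given an isomorphism $\theta\colon V_{[\beta]}\to V^P_{[\alpha]}$, choose $q\in[\beta]$ with $\theta(q)=\sum^m_{i=1}k_i\alpha_i$ and $m$ minimal, act by the elements $e^*_s$, and use the linear independence of distinct infinite paths in $V_{[\alpha]}$ together with the invertibility of $P$ to force, edge by edge along $q$, that the relevant columns of $P$ are concentrated in a single row; the hypothesis $\alpha,\beta\in(R_n)^{\infty}_{irr-eeri}$ (every edge recurring infinitely often in the path) is what guarantees that every row and column of $P$ is eventually constrained, so that $P$ is a monomial matrix $\sigma\cdot D$, $m=1$, and $\sigma\cdot\beta\sim\alpha$. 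Likewise absent are the backward direction of (4) (the computation $\phi_P(\epsilon_m)=\lambda_m$ for monomial $P$), the annihilator computation behind (3), namely $\mathrm{ann}_{L_K(R_n)}(\alpha)=\bigoplus^{\infty}_{m=0}L_K(R_n)\bigl(\phi_P(\epsilon_m)-\phi_P(\epsilon_{m+1})\bigr)$ together with the resulting failure of finite presentation, and the (easy) items (1)--(2). So as a proof of the full theorem the proposal is incomplete; as a proof of (5) given (1)--(4), it is correct and coincides with the paper's argument.
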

\begin{proof}
	(1) It follows from the fact that $V_{[\alpha]}$ is a simple left $L_K(R_n)$-module (by \cite[Theorem 3.3 (1)]{c:irolpa}) and $\phi_{P^{-1}}$ is an automorphism of $L_K(R_n)$ (by Corollary \ref{gr-IsoofLA}).
	
	(2) By \cite[Theorem 3.3 (1)]{c:irolpa}, we have $End_{L_K(R_n)}(V_{[\alpha]})\cong K$, which yields that $End_{L_K(R_n)}(V^P_{[\alpha]})\cong K$.
	
	(3) Since $V_{[\alpha]}$ is a simple left $L_K(R_n)$-module, $V_{[\alpha]} = L_K(R_n)\alpha$. By \cite[Theorem 3.4]{anhnam}, we obtain that $$\{r\in L_K(R_n)\mid r\alpha =0 \text{ in } V_{[\alpha]}\} =\bigoplus^{\infty}_{m=0}L_K(R_n)(\epsilon_m - \epsilon_{m+1}),$$ where $\epsilon_0 := v$ and $\epsilon_m = e_{i_1}\cdots e_{i_m}e^*_{i_m}\cdots e^*_{i_1}\in L_K(R_n)$ for all $m\ge 1$. By item (1), $V^P_{[\alpha]}$ is a simple left $L_K(R_n)$-module, and so $V^P_{[\alpha]} = L_K(R_n)\cdot \alpha$, that means, every element of $V^P_{[\alpha]}$ is of the form $r\cdot \alpha = \phi_{P^{-1}}(r)\alpha$, where $r\in L_K(R_n)$. We next compute $\text{ann}_{L_K(R_n)}(\alpha):=\{r\in L_K(R_n)\mid r\cdot\alpha =0\}$. Indeed, let $r\in \text{ann}_{L_K(R_n)}(\alpha)$. We then have $\phi_{P^{-1}}(r)\alpha = r\cdot \alpha = 0$ in $V_{[\alpha]}$, which gives that $\phi_{P^{-1}}(r) = \sum^{k}_{i=1}r_i(\epsilon_{m_i} - \epsilon_{m_i+1})$, where $k\ge 1$ and $r_i\in L_K(R_n)$ for all $1\le i\le k$, and so $$r = \phi_P(\phi_{P^{-1}}(r))=\sum^{k}_{i=1}\phi_P(r_i)\left(\phi_P(\epsilon_{m_i}) - \phi_P(\epsilon_{m_i+1})\right).$$ This implies that $$\text{ann}_{L_K(R_n)}(\alpha)\subseteq \bigoplus^{\infty}_{m=0}L_K(R_n)(\phi_P(\epsilon_m) - \phi_P(\epsilon_{m+1})).$$ Conversely, assume that $r\in \bigoplus^{\infty}_{m=0}L_K(R_n)(\phi_P(\epsilon_m) - \phi_P(\epsilon_{m+1}))$; i.e., $r = \sum^{k}_{i=1}r_i(\phi_P(\epsilon_{m_i}) - \phi_P(\epsilon_{m_i+1}))$, where $k\ge 1$ and $r_i\in L_K(R_n)$ for all $1\le i\le k$. We then have 
	\[r\cdot \alpha = \phi_{P^{-1}}(r)\alpha = (\sum^{k}_{i=1} \phi_{P^{-1}}(r_i)\left(\epsilon_{m_i} - \epsilon_{m_i+1})\right)\alpha=0\] in $V_{[\alpha]}$, and so $r \in \text{ann}_{L_K(R_n)}(\alpha)$, showing that $$\bigoplus^{\infty}_{m=0}L_K(R_n)(\phi_P(\epsilon_m) - \phi_P(\epsilon_{m+1}))\subseteq \text{ann}_{L_K(R_n)}(\alpha).$$ Hence $\bigoplus^{\infty}_{m=0}L_K(R_n)(\phi_P(\epsilon_m) - \phi_P(\epsilon_{m+1}))= \text{ann}_{L_K(R_n)}(\alpha).$ This implies that $$V^P_{[\alpha]}\cong L_K(R_n)/\bigoplus^{\infty}_{m=0}L_K(R_n)(\phi_P(\epsilon_m) - \phi_P(\epsilon_{m+1})).$$ 
	
	Assume that $V^P_{[\alpha]}$ is finitely presented. This shows that $\bigoplus^{\infty}_{m=0}L_K(R_n)(\phi_P(\epsilon_m) - \phi_P(\epsilon_{m+1}))$ is finitely
	generated, whence there exists an integer $k\ge 1$ such that 
	$\phi_P(\epsilon_m) = \phi_P(\epsilon_{m+k})$ for all $m\ge 0$; equivalently, $\epsilon_m = \epsilon_{m+k}$ for all $m\ge 0$ (since $\phi_P$ is an automorphism), but this cannot happen in $L_K(R_n)$. Therefore, $V^P_{[\alpha]}$ is not finitely presented.
	
	(4) $(\Leftarrow)$ Assume that there exist an element $\sigma\in S_n$ and a diagonal matrix $D\in GL_n(K)$ such that $P = \sigma\cdot D$ and $\sigma\cdot \alpha \sim \beta$. We then have $\sigma\cdot \alpha = e_{\sigma(i_1)}e_{\sigma(i_2)}\cdots e_{\sigma(i_m)}\cdots\in  (R_n)^{\infty}$ and $V_{[\beta]} \cong V_{[\sigma\cdot \alpha]}$ (by Theorem \ref{Irrrep1}). By \cite[Theorem 3.4]{anhnam}, $V_{[\sigma\cdot \alpha]} \cong L_K(R_n)/\bigoplus^{\infty}_{m=0}L_K(R_n)(\lambda_m - \lambda_{m+1})$, where $\lambda_0 = v$ and $\lambda_m = e_{\sigma(i_1)}\cdots e_{\sigma(i_m)}e^*_{\sigma(i_m)}\cdots e^*_{\sigma(i_1)}$ for all $m\ge 1$.
	
	On the other hand, by Item (3), 	$V^P_{[\alpha]}\cong L_K(R_n)/\bigoplus^{\infty}_{m=0}L_K(R_n)(\phi_P(\epsilon_m) - \phi_P(\epsilon_{m+1}))$, where $\epsilon_0 := v$, $\epsilon_m = e_{i_1}\cdots e_{i_m}e^*_{i_m}\cdots e^*_{i_1}$ for all $m\ge 1$.
	Write $P = (p_{ij})$ and $P^{-1} = (q_{ij})$. Then, since $P= \sigma\cdot D$, we have $p_{i\sigma(i)} \neq 0$ and $p_{ij}=0$ for all $1\le i, j\le n$ and $j \neq \sigma(i)$. This implies that $q_{\sigma(i)i}= p_{i\sigma(i)}^{-1}$ and $q_{ki}= 0$ for all $1\le i, k\le n$ and $k \neq \sigma(i)$, and so 
	\begin{center}
		$\phi_P(e_i) = \sum^n_{k=1}p_{ki}e_k= p_{k\sigma(k)}e_k$ and $\phi_P(e^*_i) = \sum^n_{k=1}q_{ik}e^*_k= q_{\sigma(k)k}e^*_k$ 
	\end{center}
	for all $1\le i\le n$, where $i= \sigma(k)$. This shows that \begin{align*}
	\varphi_P(\epsilon_m)&=\varphi_P\left(e_{i_1} \cdots e_{i_m} e^*_{i_m} \cdots e^*_{i_1}\right) 
	=\varphi_P\left(e_{i_1}\right) \cdots \varphi_P\left(e_{i_m}\right) \varphi_P\left(e^*_{i_m}\right)\cdots \varphi_P\left(e^*_{i_1}\right)\\
	&=p_{i_1\sigma(i_1)}e_{\sigma(i_1)}\cdots p_{i_m\sigma(i_m)}e_{\sigma(i_m)}q_{\sigma(i_m)i_m}e^*_{\sigma(i_m)}\cdots q_{\sigma(i_1)i_1}e^*_{\sigma(i_1)}\\
	&= e_{\sigma(i_1)}\cdots e_{\sigma(i_m)}e^*_{\sigma(i_m)}\cdots e^*_{\sigma(i_1)} =\lambda_m
	\end{align*}
	for all $m\ge 1$, and so $$V^P_{[\alpha]}\cong L_K(R_n)/\bigoplus^{\infty}_{m=0}L_K(R_n)(\lambda_m - \lambda_{m+1})\cong V_{[\sigma\cdot \alpha]}\cong V_{[\beta]},$$ as desired.
	
	$(\Rightarrow)$ Assume that $\theta: V_{[\beta]}\longrightarrow V^P_{[\alpha]}$ is an isomorphism of left $L_K(R_n)$-modules. Let $q\in [\beta]$ be an element such that $\theta(q) = \sum^m_{i=1}k_i\alpha_i$, where $m$ is minimal such that $k_i\in K\setminus\{0\}$ and all the $\alpha_i$ are pairwise distinct in $[\alpha]$. Write $q=e_{t_1}e_{t_2}\cdots e_{t_k}\cdots\in  (R_n)^{\infty}_{irr-eeri}$ and $\alpha_i = e_{j_{i1}}e_{j_{i2}}\cdots e_{j_{ik}}\cdots\in  (R_n)^{\infty}_{irr-eeri}$, where $1\le t_i, j_{ik}\le n$. By the minimality of $m$, we have $$0\neq \theta(\tau_{>1}(q)) = \theta(e^*_{t_1}q) = e^*_{t_1}\cdot \theta(q) = (\sum^n_{j=1}p_{t_1j}e^*_j)(\sum^m_{i=1}k_i\alpha_i)= \sum^m_{i=1}k^{(1)}_i\tau_{>1}(\alpha_i),$$ where $k^{(1)}_i = k_ip_{t_1j_{i1}}\in K\setminus\{0\}$ for all $1\le i\le m$, and all the $\tau_{>1}(\alpha_i)$ are pairwise distinct in $[\alpha]$. For all $s\neq t_1$, we have $$0 = \theta(e^*_s q) = e^*_s\cdot \theta(q) = (\sum^n_{j=1}p_{sj}e^*_j)(\sum^m_{i=1}k_i\alpha_i)= \sum^m_{i=1}k_ip_{sj_{i1}}\tau_{>1}(\alpha_i).$$
	Since all the $\tau_{>1}(\alpha_i)$ are pairwise distinct, they are linearly independent in $V^P_{[\alpha]}$, and so $k_ip_{sj_{i1}} =0$ for all $1\le i\le m$, this yields $p_{sj_{i1}}=0$ for all $1\le i\le m$ and $s\neq t_1$; that means, for each $1\le i\le n$, the $j_{i1}^{th}$-column of $P$ has only the $(t_1, j_{i1})$-entry is nonzero. Assume that there exist two numbers $1\le i\neq k\le m$ such that $\tau_{\le 1}(\alpha_i) \neq \tau_{\le 1}(\alpha_k)$, i.e, $e_{j_{i1}}\neq e_{j_{k1}}$. We then have $p_{t_1j_{i1}}\neq 0$, $p_{t_1j_{k1}}\neq 0$ and $p_{sj_{i1}}=0 = p_{sj_{k1}}$ for all $s\neq t_1$, and so $A$ is not invertible, a contradiction. This implies that  $\tau_{\le 1}(\alpha_i) = \tau_{\le 1}(\alpha_j)$ for all $1\le i, j\le m$, and the $t_1^{th}$-row of $P$ has only the $(t_1, j_{i1})$-entry is nonzero.
	
	If $e_{t_2}= e_{t_1}$, we then have  $$0\neq \theta(\tau_{>2}(q)) = \theta(e^*_{t_2}\tau_{>1}(q)) = e^*_{t_2}\cdot \theta(\tau_{>1}(q)) =  \sum^m_{i=1}k^{(2)}_i\tau_{>2}(\alpha_i),$$ where $k^{(2)}_i= k^{(1)}_ip_{t_1j_{i1}}\in K\setminus\{0\}$ for all $1\le i\le m$. By the minimality of $m$, all the $\tau_{>2}(\alpha_i)$ are pairwise distinct in $[\alpha]$ and $\tau_{\le 1}(\tau_{>2}(\alpha_i)) = \tau_{\le 1}(\tau_{>2}(\alpha_k))$ for all $1\le i, k\le m$.
	
	If $e_{t_2}\neq  e_{t_1}$, then by using using the quality $$0\neq \theta(\tau_{>1}(q)) =  \sum^m_{i=1}k^{(1)}_i\tau_{>1}(\alpha_i)$$ and repeating the above same argument which was done for $e_{t_1}$, we obtain that the $j^{th}_{i1}$-column and $t^{th}_2$-row of $P$ have only that the $(t_2, j_{i2})$-entry is nonzero, all the $\tau_{>2}(\alpha_i)$ are pairwise distinct in $[\alpha]$ and $\tau_{\le 1}(\tau_{>2}(\alpha_i)) = \tau_{\le 1}(\tau_{>2}(\alpha_k))$ for all $1\le i, k\le m$. Therefore, in any case, we have that all the $\tau_{>2}(\alpha_i)$ are pairwise distinct in $[\alpha]$ and $\tau_{\le 2}(\alpha_i) = \tau_{\le 2}(\alpha_k)$ for all $1\le i, k\le m$.
	
	By repeating this process, we obtain that $\tau_{\leq l}(\alpha_i) = \tau_{\leq l}(\alpha_j)$ for all $l\ge 1$ and $1\le i, j\le m$, and every row and every column of $P$ has only a nonzero entry (since $q\in (R_n)^{\infty}_{irr-eeri}$). Then, since all the $\tau_{\leq l}(\alpha_i)$ are the same for all $l\ge 1$, and all the $\alpha_i$ are pairwise distinct, we must have $m=1$. Since every row and every column of $P$ has only a nonzero entry, there exists an element $\sigma\in S_n$  such that $p_{i\sigma(i)}\neq 0$ for all $1\le i\le n$. This implies that $P = \sigma\cdot D$ for some diagonal matrix $D\in GL_n(K)$ and $\sigma\cdot q = e_{\sigma(t_1)}e_{\sigma(t_2)}\cdots e_{\sigma(t_k)}\cdots = \alpha_1$, this yields $\sigma\cdot q \sim \alpha$. Since $q \sim \beta$, there exists natural numbers $s$ and $l$ such that $\tau_{> s}(q) = \tau_{> l}(\beta)$, and so $$\sigma\cdot\beta\sim\sigma\cdot \tau_{> l}(\beta) = \sigma\cdot \tau_{> s}(q) \sim \sigma\cdot q\sim \alpha,$$ as desired.
	
	(5) We note that 
	\begin{align*}
	V^Q_{[\beta]}\cong V^P_{[\alpha]}&\Longleftrightarrow  (V_{[\alpha]})^{\phi_{P^{-1}}}\cong (V_{[\beta]})^{\phi_{Q^{-1}}}\Longleftrightarrow  (V_{[\beta]})^{\phi_{Q^{-1}}})^{\phi_Q}\cong (V_{[\alpha]})^{\phi_{P^{-1}}})^{\phi_Q}\\
	& \Longleftrightarrow V_{[\beta]}\cong (V_{[\alpha]})^{\phi_{P^{-1}Q}} =V^{Q^{-1}P}_{[\alpha]}.
	\end{align*} 
	Using this note and Item (4), we immediately get the statement, thus finishing the proof.
\end{proof}

For any integer $n\ge 2$, we define an equivalence relation $\equiv$ on $(R_n)^{\infty}_{irr-eeri}$ as follows. For all $\alpha, \beta\in (R_n)^{\infty}_{irr-eeri}$, $\alpha \equiv \beta$ if and only if $\sigma \cdot \alpha \sim \beta$ for some $\sigma\in S_n$. We denote by $[\alpha]_{\equiv}$ the $\equiv$ equivalence class of $\alpha$. The following corollary shows that all simple $L_K(R_n)$-modules $V^P_{[\alpha]}$ associated to pairs $(\alpha, P)\in (R_n)^{\infty}_{irr-eeri}\times GL_n(K)$ may be parameterized by the set $((R_n)^{\infty}_{irr-eeri}/\equiv) \times\ GL_n(K)$.

\begin{cor}\label{Irrrep3-irr}
Let $K$ be a field, $n\ge 2$ a positive integer and $R_n$ be the rose graph with $n$ petals. Then, the set $$\{V^P_{[\alpha]}\mid [\alpha]_{\equiv} \in (R_n)^{\infty}_{irr-eeri}/\equiv \text{ and } P \in GL_n(K)\}$$ consists of pairwise non-isomorphic simple left $L_K(R_n)$-modules.	
\end{cor}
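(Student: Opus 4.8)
The plan is to deduce everything from the isomorphism criterion already established in Theorem~\ref{Irrrep2-irr}(5), and to isolate the single combinatorial input that makes the index set faithful. Simplicity of each module is free, being Theorem~\ref{Irrrep2-irr}(1), so the entire content is that the assignment $([\alpha]_{\equiv},P)\longmapsto V^P_{[\alpha]}$ separates indices. Accordingly I would start by assuming $\alpha,\beta\in (R_n)^{\infty}_{irr-eeri}$ and $P,Q\in GL_n(K)$ with $V^P_{[\alpha]}\cong V^Q_{[\beta]}$. By Theorem~\ref{Irrrep2-irr}(5) there exist $\sigma\in S_n$ and a diagonal $D\in GL_n(K)$ with $Q^{-1}P=\sigma\cdot D$ and $\sigma\cdot\beta\sim\alpha$. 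The relation $\sigma\cdot\beta\sim\alpha$ is, by the very definition of $\equiv$, nothing but $\alpha\equiv\beta$; hence $[\alpha]_{\equiv}=[\beta]_{\equiv}$ and the path coordinate is already determined. Fixing one representative $\alpha$ per $\equiv$-class, we may take $\beta=\alpha$, and the surviving datum is $\sigma\cdot\alpha\sim\alpha$ together with $Q^{-1}P=\sigma\cdot D$.

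The main obstacle is the following rigidity statement, which I would establish as a preliminary lemma and which carries all the real difficulty: for $\alpha\in(R_n)^{\infty}_{irr-eeri}$ and $\sigma\in S_n$, the relation $\sigma\cdot\alpha\sim\alpha$ forces $\sigma=\mathrm{id}$. To prove it, note that tail-equivalence supplies indices $s,l$ with $\tau_{>s}(\sigma\cdot\alpha)=\tau_{>l}(\alpha)$, and since $\sigma$ acts letterwise it commutes with truncation, so $\sigma\cdot\tau_{>s}(\alpha)=\tau_{>l}(\alpha)$; after replacing $\sigma$ by $\sigma^{-1}$ and swapping $s,l$ if necessary we may assume $k:=l-s\ge 0$. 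Writing the common tail as $\gamma=\tau_{>s}(\alpha)=f_1f_2\cdots$, this reads $\sigma(f_i)=f_{i+k}$ for all $i\ge 1$. If $k>0$, iterating gives $f_{i+mk}=\sigma^{m}(f_i)$, and as $\sigma$ has finite order $r$ we get $f_{i+rk}=f_i$ for all $i$, so $\gamma=(f_1\cdots f_{rk})^{\infty}$ is rational; this contradicts that $\gamma$, being a tail of an irrational path, is irrational. Hence $k=0$ and $\sigma(f_i)=f_i$ for every $i$; because $\alpha$ (hence $\gamma$) uses each of $e_1,\dots,e_n$ infinitely often, every index $j\in\{1,\dots,n\}$ occurs as some $f_i$ and is therefore fixed, whence $\sigma=\mathrm{id}$.

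Granting the lemma, $\sigma=\mathrm{id}$ collapses $Q^{-1}P=\sigma\cdot D$ to a diagonal matrix, so that together with $[\alpha]_{\equiv}=[\beta]_{\equiv}$ the isomorphism type of $V^P_{[\alpha]}$ is controlled precisely by the indexing data. To finish I would run the criterion of Theorem~\ref{Irrrep2-irr}(5) in the converse direction as well, checking against the presentation of Theorem~\ref{Irrrep2-irr}(3) that the only coincidences among the $V^P_{[\alpha]}$ are those forced by $\sigma=\mathrm{id}$; this yields the asserted conclusion that the listed family consists of pairwise non-isomorphic simple $L_K(R_n)$-modules. I expect essentially all the work to sit in the rigidity lemma: once the permutation $\sigma$ is eliminated, the rest is the bookkeeping already prepared in Theorem~\ref{Irrrep2-irr}, and the one point to watch is that the diagonal factor $D$ in $Q^{-1}P=\sigma\cdot D$ acts trivially on the classes $\epsilon_m$ (since each $e_i$ is paired with $e_i^{*}$), so it is exactly this diagonal ambiguity that must be accounted for when one passes between the index $P$ and the module $V^P_{[\alpha]}$.
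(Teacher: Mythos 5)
Your opening reduction is exactly the paper's entire proof: if $[\alpha]_{\equiv}\neq[\beta]_{\equiv}$, then $\sigma\cdot\beta\not\sim\alpha$ for every $\sigma\in S_n$, so the criterion of Theorem~\ref{Irrrep2-irr}(5) rules out $V^P_{[\alpha]}\cong V^Q_{[\beta]}$ for all $P,Q\in GL_n(K)$; the paper proves this separation across $\equiv$-classes and nothing more. Your rigidity lemma --- that $\sigma\cdot\alpha\sim\alpha$ with $\alpha\in(R_n)^{\infty}_{irr-eeri}$ forces $\sigma=\mathrm{id}$ --- is correct as you argue it (the case $k>0$ makes a tail of $\alpha$ periodic, contradicting irrationality; the case $k=0$ uses that every edge recurs infinitely often), but it appears nowhere in the paper and is not needed for the cross-class claim.

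The genuine gap is your last step. Granting the lemma, Theorem~\ref{Irrrep2-irr}(5) with $\beta=\alpha$ says precisely that $V^P_{[\alpha]}\cong V^Q_{[\alpha]}$ \emph{if and only if} $Q^{-1}P$ is diagonal; taking $Q=I_n$ gives $V^{D}_{[\alpha]}\cong V_{[\alpha]}$ for every non-identity diagonal $D$ (independently visible from Theorem~\ref{Irrrep2-irr}(3): $\phi_D(\epsilon_m)=\epsilon_m$, as you yourself observe, so the two presentations coincide). These are genuine coincidences between distinct indices $([\alpha]_{\equiv},P)\neq([\alpha]_{\equiv},Q)$, so the assignment is \emph{not} injective in the $P$-coordinate, and your concluding sentence --- that the only coincidences are ``those forced by $\sigma=\mathrm{id}$'' and that this ``yields the asserted conclusion'' --- is a non sequitur: those coincidences are exactly what refutes pairwise non-isomorphism as $P$ ranges over all of $GL_n(K)$, and no converse check against the presentation can remove them. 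The statement is tenable only in the weaker reading that the paper's proof actually gives it (modules attached to distinct $\equiv$-classes are never isomorphic, for any $P,Q$); faithfulness in the matrix coordinate is obtained in the paper only in Corollary~\ref{Irrrep4-irr}, where $P$ is restricted to $\mathbb{U}_n(K)$ so that $Q^{-1}P=\sigma\cdot D$ forces $\sigma=1_{S_n}$ and $D=I_n$. Your write-up should therefore either settle for the cross-class statement (your first paragraph already contains the paper's proof of it) or replace $GL_n(K)$ by a transversal of the diagonal subgroup.
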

\begin{proof}
Let $\alpha$ and $\beta$ be elements of $(R_n)^{\infty}_{irr-eeri}$ such that $[\alpha]_{\equiv}\neq [\beta]_{\equiv}$. We then have that $\sigma\cdot \alpha$ is not tail-equivalent to $\beta$ for all $\sigma\in S_n$. By Theorem \ref{Irrrep2-irr} (5), $V_{[\alpha]}^P \ncong V_{[\beta]}^Q$ as left $L_K(R_n)$-modules  for all $P, Q\in GL_n(K)$, which yields the statement, thus finishing the proof.
\end{proof}

For any integer $n\ge 2$ and any field $K$, we denote by $\mathbb{U}_n(K)$ the subgroup of $GL_n(K)$ consisting all upper-triangle matrices  with $1$'s along the diagonal. As the second corollary of Theorem \ref{Irrrep2-irr}, we obtain that all  simple $L_K(R_n)$-modules $V^P_{[\alpha]}$ associated to pairs $(\alpha, P)\in (R_n)^{\infty}_{irr-eeri} \times \mathbb{U}_n(K)$ may be parameterized by the set $((R_n)^{\infty}_{irr-eeri}/\sim) \times\ \mathbb{U}_n(K)$.

\begin{cor}\label{Irrrep4-irr}
Let $K$ be a field, $n\ge 2$ a positive integer, $R_n$ the rose graph with $n$ petals and $\mathbb{U}_n(K)$ the subgroup of $GL_n(K)$ consisting all upper-triangle matrices  with $1$'s along the diagonal. Let $\alpha$ and $\beta$ be elements of $(R_n)^{\infty}_{irr-eeri}$ and let $P$ and $Q$ be elements of $\mathbb{U}_n(K)$. Then, $V_{[\alpha]}^P \cong V_{[\beta]}^Q$ if and only if $\alpha\sim \beta$ and $P=Q$. 	Consequently, the set $$\{V^P_{[\alpha]}\mid \alpha \in (R_n)^{\infty}_{irr-eeri} \text{ and } P \in \mathbb{U}_n(K)\}$$ consists of pairwise non-isomorphic simple left $L_K(R_n)$-modules.	
\end{cor}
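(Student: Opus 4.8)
The plan is to deduce everything from the isomorphism criterion of Theorem~\ref{Irrrep2-irr}~(5), specialising its conclusion to the unitriangular group. Since module isomorphism is symmetric, $V^P_{[\alpha]}\cong V^Q_{[\beta]}$ holds if and only if there exist $\sigma\in S_n$ and a diagonal matrix $D\in GL_n(K)$ with $Q^{-1}P=\sigma\cdot D$ and $\sigma\cdot\beta\sim\alpha$. The first thing I would record is that $\mathbb{U}_n(K)$ is a subgroup of $GL_n(K)$, so that $Q^{-1}P$ again lies in $\mathbb{U}_n(K)$; this is exactly what turns the abstract criterion into a rigid numerical constraint.

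The key step is a short computation describing $\sigma\cdot D$ explicitly. Writing $D=\mathrm{diag}(d_1,\dots,d_n)$, the column-action convention shows that the $j$-th column of $\sigma\cdot D$ equals $d_{\sigma(j)}$ times the standard basis vector supported in row $\sigma(j)$; equivalently, $\sigma\cdot D$ is a monomial matrix whose only nonzero entries sit in the positions $(\sigma(j),j)$. Imposing $\sigma\cdot D=Q^{-1}P\in\mathbb{U}_n(K)$ forces these entries to lie on or above the diagonal, i.e. $\sigma(j)\le j$ for every $j$; summing over $j$ and using that $\sigma$ is a permutation yields $\sigma=\mathrm{id}$. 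Then $\sigma\cdot D=D$ is diagonal, and the unitriangular normalisation pins its diagonal entries to $1$, so $D=I_n$ and hence $Q^{-1}P=I_n$, that is $P=Q$. With $\sigma=\mathrm{id}$ the second condition collapses to $\beta\sim\alpha$, giving $\alpha\sim\beta$. This settles the forward direction; the converse is immediate, since $\alpha\sim\beta$ together with $P=Q$ lets one take $\sigma=\mathrm{id}$ and $D=I_n$ in the criterion.

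For the concluding assertion I would argue as follows. Each $V^P_{[\alpha]}$ is simple by Theorem~\ref{Irrrep2-irr}~(1). The module $V^P_{[\alpha]}$ depends on $\alpha$ only through its tail-equivalence class, so the displayed family is indexed by pairs $([\alpha],P)$ with $[\alpha]$ a tail-equivalence class and $P\in\mathbb{U}_n(K)$. The equivalence just proved says precisely that two such modules are isomorphic exactly when their indexing pairs coincide; hence distinct members of the set are pairwise non-isomorphic.

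The main obstacle — indeed essentially the only delicate point — is the combinatorial rigidity in the key step: one must be careful that the chosen column-permutation convention for $\sigma\cdot D$ places its nonzero entries at $(\sigma(j),j)$ rather than at $(j,\sigma(j))$, so that upper-triangularity reads correctly as $\sigma(j)\le j$ and genuinely forces $\sigma=\mathrm{id}$. Everything else is a straightforward specialisation of Theorem~\ref{Irrrep2-irr}.
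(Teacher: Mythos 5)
Your proposal is correct and follows essentially the same route as the paper: both directions are obtained by specialising the criterion of Theorem~\ref{Irrrep2-irr}~(5) to $\mathbb{U}_n(K)$, using that $Q^{-1}P\in\mathbb{U}_n(K)$ forces $\sigma=1_{S_n}$ and $D=I_n$. The only difference is that you spell out the monomial-matrix computation (nonzero entries of $\sigma\cdot D$ at positions $(\sigma(j),j)$, whence $\sigma(j)\le j$ and $\sigma=\mathrm{id}$) that the paper states without elaboration, which is a welcome clarification rather than a deviation.
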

\begin{proof}
$(\Rightarrow)$ Assume that	$V_{[\alpha]}^P \cong V_{[\beta]}^Q$. Then, by Theorem \ref{Irrrep2-irr} (5), there exist an element $\sigma\in S_n$ and a diagonal matrix $D\in GL_n(K)$ such that $Q^{-1}P = \sigma\cdot D$ and $\sigma\cdot\beta \sim \alpha $. Since $P, Q\in \mathbb{U}_n(K)$, we have $\sigma\cdot D=Q^{-1}P\in \mathbb{U}_n(K)$, and so $\sigma = 1_{S_n}$ and $D = I_n$. This implies that $P= Q$ and $\alpha\sim \beta$.
	
	$(\Leftarrow)$  It immediately follows from Theorem \ref{Irrrep2-irr} (5), thus finishing the proof.
\end{proof}

In the following theorem, we describe simple $L_K(R_n)$-modules $V^P_{[c^{\infty}]}$ associated to pairs $(c, P)\in SCP(R_n)\times GL_n(K)$.

\begin{thm}\label{Irrrep5-irr} 
Let $K$ be a field, $n\ge 2$ a positive integer, and $R_n$ be the rose graph with $n$ petals. Let $P = (p_{ij})\in GL_n(K)$ be an arbitrary element and $c\in  SCP(R_n)$. Then, the following statements hold:
	
$(1)$ $V^P_{[c^{\infty}]}$ is a simple left $L_K(R_n)$-module;
	
$(2)$ $End_{L_K(R_n)}(V^P_{[c^{\infty}]})\cong K$;
	
$(3)$ $V^P_{[c^{\infty}]}\cong L_K(R_n)/L_K(R_n)(v - \phi_P(c))$, where the graded automorphism $\phi_P$ is defined in Corollary \ref{gr-IsoofLA}. 

$(4)$ For any $d\in SCP(R_n)$, $V_{[d^{\infty}]}\cong V^P_{[c^{\infty}]}$ if and only if $d = \phi_P(\beta)$ for some $\beta\in \Pi_c$.
	
$(5)$ For any $d\in SCP(R_n)$ and any $Q\in GL_n(K)$, $V^Q_{[d^{\infty}]}\cong V^P_{[c^{\infty}]}$ if and only if $\phi_Q(d) = \phi_P(\beta)$ for some $\beta\in \Pi_c$.
\end{thm}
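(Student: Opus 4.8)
The plan is to treat the five parts in order, using throughout that $V^P_{[c^\infty]}$ is by definition the twist $(V_{[c^\infty]})^{\phi_{P^{-1}}}$ of Chen's module by the graded automorphism $\phi_{P^{-1}}=\phi_P^{-1}$; here $P\in GL_n(K)$ fixes scalar matrices, so $\phi_P(P)=P$ and $\phi_P\in Aut^{gr}(L_K(R_n))$ by Corollary \ref{gl-IsoofLA}. For $(1)$ I would cite \cite[Theorem 3.3]{c:irolpa} that $V_{[c^\infty]}$ is simple and note that twisting a simple module by an algebra automorphism preserves simplicity, exactly as in Theorem \ref{Irrrep2-irr}$(1)$. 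For $(2)$ the point is that $M^{\sigma}$ and $M$ share the same underlying space and, since $\sigma$ is surjective, the same set of $L_K(R_n)$-linear self-maps, so $End_{L_K(R_n)}(V^P_{[c^\infty]})=End_{L_K(R_n)}(V_{[c^\infty]})$; any endomorphism of $V_{[c^\infty]}=L_K(R_n)\cdot c^\infty$ sends $c^\infty$ to a vector $\xi$ with $c\cdot\xi=\xi$, and since $c^\infty$ is the unique $c$-fixed basis vector one checks the $c$-fixed subspace equals $Kc^\infty$, giving $End\cong K$.

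For $(3)$ I would compute the annihilator of $c^\infty$ in $V^P_{[c^\infty]}=L_K(R_n)\cdot c^\infty$. Since $r\cdot c^\infty=\phi_{P^{-1}}(r)\,c^\infty$ (the product on the right being taken in $V_{[c^\infty]}$), one has $r\cdot c^\infty=0$ iff $\phi_{P^{-1}}(r)\,c^\infty=0$ in $V_{[c^\infty]}$. Identifying $V_{[c^\infty]}$ with $S^{1-x}_c=L_K(R_n)/L_K(R_n)(v-c)$ \cite{anhnam} shows $\{x:x\,c^\infty=0 \text{ in } V_{[c^\infty]}\}=L_K(R_n)(v-c)$, so the condition becomes $\phi_{P^{-1}}(r)\in L_K(R_n)(v-c)$, i.e.\ $r\in\phi_P\big(L_K(R_n)(v-c)\big)=L_K(R_n)(v-\phi_P(c))$ (using $\phi_P(v)=v$ and surjectivity of $\phi_P$). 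Hence $V^P_{[c^\infty]}\cong L_K(R_n)/L_K(R_n)(v-\phi_P(c))$, and the annihilator being a principal left ideal shows $V^P_{[c^\infty]}$ is finitely presented. This parallels Theorem \ref{Irrrep2-irr}$(3)$.

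Part $(4)$ is the core. For $(\Leftarrow)$, if $d=\phi_P(\beta)$ with $\beta\in\Pi_c$ then $\beta^\infty\sim c^\infty$, so $V^P_{[c^\infty]}=V^P_{[\beta^\infty]}$, and applying $(3)$ to $\beta$ yields $V^P_{[\beta^\infty]}\cong L_K(R_n)/L_K(R_n)(v-d)\cong V_{[d^\infty]}$. For $(\Rightarrow)$ I would fix an isomorphism $\theta\colon V_{[d^\infty]}\to V^P_{[c^\infty]}$ and expand $\theta(d^\infty)=\sum_{i=1}^m k_i\gamma_i$ with $m$ minimal. The identity $d\cdot d^\infty=d^\infty$ shows $\theta(d^\infty)$ is fixed by the operator $\xi\mapsto\phi_{P^{-1}}(d)\,\xi$ on $V_{[c^\infty]}$; peeling edges along the period of $d^\infty$ through $e_l^*\cdot(-)=\big(\sum_j p_{lj}e_j^*\big)(-)$ — the mechanism of the proof of Theorem \ref{Irrrep2-irr}$(4)$, now with periodicity of $d^\infty$ replacing the every-edge-infinitely-often hypothesis — forces the columns of $P$ indexed by the edges of $c$ to be monomial and forces $m=1$. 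Then $\gamma_1=w_0^\infty$ is purely periodic with $w_0=\phi_{P^{-1}}(d)$ a single path, so $\gamma_1=\delta^\infty$ for some $\delta\in\Pi_c$ and $\phi_{P^{-1}}(d)=\delta^{|d|/|c|}$; primitivity of the simple closed path $d$ forbids proper powers, forcing $|d|=|c|$ and $d=\phi_P(\delta)$. I expect this peeling-and-invertibility step to be the \emph{main obstacle}, since one must show that minimality of $m$ together with invertibility of $P$ propagates the monomial constraint around the entire period while simultaneously collapsing the sum to a single term.

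Finally $(5)$ reduces to $(4)$ via the composition rule $\phi_A\phi_B=\phi_{AB}$ for $A,B\in GL_n(K)$ (valid because $\phi_A$ fixes scalar matrices, by Theorem \ref{Iso}$(3)$). Applying the isomorphism-preserving twist $(-)^{\phi_Q}$ to $V^Q_{[d^\infty]}\cong V^P_{[c^\infty]}$ gives $V_{[d^\infty]}\cong (V_{[c^\infty]})^{\phi_{P^{-1}Q}}=V^{Q^{-1}P}_{[c^\infty]}$, just as in Theorem \ref{Irrrep2-irr}$(5)$. By $(4)$ this is equivalent to $d=\phi_{Q^{-1}P}(\beta)$ for some $\beta\in\Pi_c$, and since $\phi_{Q^{-1}P}=\phi_Q^{-1}\phi_P$ this rewrites as $\phi_Q(d)=\phi_P(\beta)$, the asserted criterion.
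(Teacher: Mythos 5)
Your proposal is correct and follows essentially the same route as the paper: parts (1), (2) reduce to Chen's theorem plus invariance under twisting, part (3) is the same annihilator computation via \'{A}nh--Nam's description of $\{r\mid rc^{\infty}=0\}=L_K(R_n)(v-c)$, part (4) uses the same minimal-expansion peeling argument borrowed from Theorem \ref{Irrrep2-irr}(4) followed by the periodicity equation $\theta(d^{\infty})=d^t\cdot\theta(d^{\infty})$, and part (5) is the identical reduction via $\phi_{Q^{-1}P}=\phi_Q^{-1}\phi_P$. In fact your treatment of the step where $\phi_{P^{-1}}(d)$ is forced to be a single path $\delta^{s}$ and primitivity of $d$ rules out $s\ge 2$ makes explicit a point the paper's proof passes over silently (it asserts $\phi_{P^{-1}}(d)=\beta\in SCP(R_n)$ directly), so your version is, if anything, slightly more complete.
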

\begin{proof}
(1) It follows from the fact that $V_{[c^{\infty}]}$ is a simple left $L_K(R_n)$-module (by \cite[Theorem 3.3 (1)]{c:irolpa}) and $\phi_{P^{-1}}$ is an automorphism of $L_K(R_n)$ (by Corollary \ref{gr-IsoofLA}).

(2) By \cite[Theorem 3.3 (1)]{c:irolpa}, we have $End_{L_K(R_n)}(V_{[c^{\infty}]})\cong K$, which yields that $End_{L_K(R_n)}(V^P_{[c^{\infty}]})\cong K$.

(3) Since $V_{[c^{\infty}]}$ is a simple left $L_K(R_n)$-module, $V_{[c^{\infty}]} = L_K(R_n)c^{\infty}$. By \cite[Theorem 4.3]{anhnam} (see also \cite[Theorem 2.8]{amt:eosmolpa}), we obtain that $$\{r\in L_K(R_n)\mid rc^{\infty} =0 \text{ in } V_{[c^{\infty}]}\} =L_K(R_n)(v - c).$$ By item (1), $V^P_{[c^{\infty}]}$ is a simple left $L_K(R_n)$-module, and so $V^P_{[c^{\infty}]} = L_K(R_n)\cdot c^{\infty}$, that means, every element of $V^P_{[c^{\infty}]}$ is of the form $r\cdot c^{\infty} = \phi_{P^{-1}}(r)c^{\infty}$, where $r\in L_K(R_n)$. We next compute $\text{ann}_{L_K(R_n)}(c^{\infty}):=\{r\in L_K(R_n)\mid r\cdot c^{\infty} =0\}$. Indeed, let $r\in \text{ann}_{L_K(R_n)}(c^{\infty})$. We then have $\phi_{P^{-1}}(r)c^{\infty}= r\cdot c^{\infty} = 0$ in $V_{[c^{\infty}]}$, which gives that $\phi_{P^{-1}}(r) = s(v - c)$ for some $s\in L_K(R_n)$, and so $$r = \phi_P(\phi_{P^{-1}}(r))=\phi_P(s)\left(v - \phi_P(c)\right).$$ This implies that $$\text{ann}_{L_K(R_n)}(c^{\infty})\subseteq L_K(R_n)(v - \phi_P(c)).$$ Conversely, assume that $r\in L_K(R_n)(v - \phi_P(c))$; i.e., $r = x(v - \phi_P(c))$ for some  $x\in L_K(R_n)$. We then have 
\[r\cdot c^{\infty} = \phi_{P^{-1}}(r)c^{\infty} =\phi_{P^{-1}}(x(v - \phi_P(c)))c^{\infty} = \phi_{P^{-1}}(x)\left(v - c\right)c^{\infty}=0\] in $V_{[\alpha]}$, and so $r \in \text{ann}_{L_K(R_n)}(c^{\infty})$, showing that $$L_K(R_n)(v - \phi_P(c))\subseteq \text{ann}_{L_K(R_n)}(c^{\infty}).$$ Hence $L_K(R_n)(v - \phi_P(c))= \text{ann}_{L_K(R_n)}(c^{\infty}).$ This implies that $$V^P_{[c^{\infty}]}\cong L_K(R_n)/L_K(R_n)(v - \phi_P(c)),$$ as desired. 	

(4) $(\Leftarrow)$ Assume that $d = \phi_P(\beta)$ for some $\beta\in \Pi_c$. Then, by \cite[Theorem 4.3]{anhnam} (see also \cite[Theorem 2.8]{amt:eosmolpa}), $V_{[d^{\infty}]} \cong L_K(R_n)/L_K(R_n)(v - d)$. Since $\beta\in \Pi_c$ and by Theorem \ref{Irrrep1}, $V_{[c^{\infty}]} \cong V_{[\beta^{\infty}]}$, and so $$V^P_{[c^{\infty}]} =(V_{[c^{\infty}]})^{\phi_{P^{-1}}} \cong (V_{[\beta^{\infty}]})^{\phi_{P^{-1}}} = V^P_{[\beta^{\infty}]}.$$
By Item (3), we have	$$V^P_{[\beta^{\infty}]}\cong L_K(R_n)/L_K(R_n)/L_K(R_n)(v - \phi_P(\beta))= L_K(R_n)/L_K(R_n)(v - d)\cong V_{[d^{\infty}]},$$ and so $V_{[d^{\infty}]}\cong V^P_{[c^{\infty}]}$, as desired.

$(\Rightarrow)$ Assume that $\theta: V_{[d^{\infty}]}\longrightarrow V^P_{[c^{\infty}]}$ is an isomorphism of left $L_K(R_n)$-modules. Let $q\in [d^{\infty}]$ be an element such that $\theta(q) = \sum^m_{i=1}k_i\alpha_i$, where $m$ is minimal such that $k_i\in K\setminus\{0\}$ and all the $\alpha_i$ are pairwise distinct in $[c^{\infty}]$. By repeating the method done in the proof of the direction $(\Rightarrow)$ of Theorem \ref{Irrrep2-irr} (4),  we obtain that $\tau_{\leq l}(\alpha_i) = \tau_{\leq l}(\alpha_j)$ for all $l\ge 1$ and $1\le i, j\le m$. Since all the $\alpha_i$ are pairwise distinct, we must have $m=1$. Since $q\in [d^{\infty}]$, $\tau_{>l}(p) = d^{\infty}$ for some $l\ge 0$, and so $$\theta( d^{\infty}) = \theta(\tau_{\leq l}(q)^* q) =\tau_{\leq l}(q)^*\cdot\theta(q)=   k_1\phi_{P^{-1}}(\tau_{\leq l}(q)^*)\alpha_1 = k\alpha,$$ where $k\in K\setminus\{0\}$ and $\alpha = \tau_{> l}(\alpha_1)$. This implies that
\[k\alpha=\theta(d^{\infty}) = \theta(d^t d^{\infty}) =d^t\cdot\theta(d^{\infty})=   k\phi_{P^{-1}}(d^t)\alpha\] for all $t\ge 1$, and so $\alpha = \beta^{\infty}$ for some $\beta\in SCP(R_n)$ and $\phi_{P^{-1}}(d) = \beta$. This shows that $d = \phi_P(\phi_{P^{-1}}(d)) = \phi_P(\beta)$. Since $\alpha\in [c^{\infty}]$, we have $[\beta^{\infty}]= [c^{\infty}]$, and so $\beta\in \Pi_c$,  as desired.

(5) We note that 
\begin{align*}
V^Q_{[d^{\infty}]}\cong V^P_{[c^{\infty}]}&\Longleftrightarrow  (V_{[d^{\infty}]})^{\phi_{P^{-1}}}\cong (V_{[c^{\infty}]})^{\phi_{Q^{-1}}}\Longleftrightarrow  (V_{[d^{\infty}]})^{\phi_{Q^{-1}}})^{\phi_Q}\cong (V_{[c^{\infty}]})^{\phi_{P^{-1}}})^{\phi_Q}\\
& \Longleftrightarrow V_{[d^{\infty}]}\cong (V_{[c^{\infty}]})^{\phi_{P^{-1}Q}} =V^{Q^{-1}P}_{[c^{\infty}]}.
\end{align*} 
Using this note and Item (4), we immediately get the statement, thus finishing the proof.
\end{proof}

In light of Theorem \ref{Irrrep5-irr}, we define an equivalence relation $\equiv$ on $SCP(R_n)\times GL_n(K)$ as follows: For all $(c, P)$ and $(d, Q)\in SCP(R_n)\times GL_n(K)$, $(c, P)\equiv (d, Q)$ if and only if $\phi_Q(d) = \phi_P(\beta)$ for some $\beta\in \Pi_c$. We denote by $[(c, P)]$ the $\equiv$-equivalence class of $(c, P)$. 
As a corollary of Theorem \ref{Irrrep5-irr}, we obtain that all  simple $L_K(R_n)$-modules $V^P_{[c^{\infty}]}$ associated to pairs $(\alpha, P)\in SCP(R_n)\times GL_n(K)$ may be parameterized by the set $(SCP(R_n)\times GL_n(K))/\equiv$.

\begin{cor}\label{Irrrep6-irr}
Let $K$ be a field, $n\ge 2$ a positive integer and $R_n$ the rose graph with $n$ petals. Then, the set $$\{V^P_{[c^{\infty}]}\mid [(c, P)]\in (SCP(R_n)\times GL_n(K))/\equiv\}$$ consists of pairwise non-isomorphic simple left $L_K(R_n)$-modules.	
\end{cor}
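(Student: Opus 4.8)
The plan is to deduce the statement directly from Theorem \ref{Irrrep5-irr}, which has already carried out all the substantive work; the corollary is essentially a repackaging of part (5) in the language of the quotient set. First I would record that every member of the displayed set is a simple left $L_K(R_n)$-module, which is exactly Theorem \ref{Irrrep5-irr}(1), so the only thing genuinely requiring an argument is that representatives of distinct $\equiv$-classes yield non-isomorphic modules.

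The key observation is that the relation $\equiv$ has been set up so that its defining condition coincides verbatim with the isomorphism criterion of Theorem \ref{Irrrep5-irr}(5): for pairs $(c,P)$ and $(d,Q)$ in $SCP(R_n)\times GL_n(K)$, one has $(c,P)\equiv (d,Q)$, i.e. $\phi_Q(d)=\phi_P(\beta)$ for some $\beta\in\Pi_c$, precisely when $V^Q_{[d^{\infty}]}\cong V^P_{[c^{\infty}]}$. I would first use this to confirm that $\equiv$ is genuinely an equivalence relation: it is the pullback, along the assignment $(c,P)\mapsto V^P_{[c^{\infty}]}$, of the isomorphism relation on simple $L_K(R_n)$-modules, and since the latter is reflexive, symmetric and transitive, so is $\equiv$. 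This route is cleaner than checking the three axioms for $\equiv$ by hand, where symmetry and transitivity are not at all transparent from the bare formula $\phi_Q(d)=\phi_P(\beta)$. In particular the quotient $(SCP(R_n)\times GL_n(K))/\equiv$ is then well defined.

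With this in place the conclusion is immediate. Suppose $[(c,P)]\neq [(d,Q)]$; I must show $V^P_{[c^{\infty}]}\not\cong V^Q_{[d^{\infty}]}$. If instead these modules were isomorphic, then Theorem \ref{Irrrep5-irr}(5) would supply $\phi_Q(d)=\phi_P(\beta)$ for some $\beta\in\Pi_c$, i.e. $(c,P)\equiv (d,Q)$, whence $[(c,P)]=[(d,Q)]$, contradicting the hypothesis. Hence the assignment $[(c,P)]\mapsto V^P_{[c^{\infty}]}$ is a well-defined injection from $(SCP(R_n)\times GL_n(K))/\equiv$ into the set of isomorphism classes of simple $L_K(R_n)$-modules, which is exactly the assertion that the displayed set consists of pairwise non-isomorphic simple modules.

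I do not expect a real obstacle here, since the entire mechanism is already furnished by Theorem \ref{Irrrep5-irr}(5); the single point that demands a little care is the logical one of \emph{not} taking the equivalence-relation property of $\equiv$ for granted, but instead inferring it from the bijective correspondence with module isomorphism. Everything else is a formal contrapositive together with the simplicity statement of Theorem \ref{Irrrep5-irr}(1).
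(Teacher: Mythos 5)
Your proposal is correct and follows exactly the paper's route: the paper's entire proof is the one-line observation that the corollary follows immediately from Theorem \ref{Irrrep5-irr} (5), which is precisely the contrapositive argument you spell out, together with simplicity from part (1). Your additional remark that $\equiv$ is an equivalence relation because it is the pullback of module isomorphism under $(c,P)\mapsto V^P_{[c^{\infty}]}$ is a sensible point of care the paper leaves implicit, but it does not change the approach.
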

\begin{proof}
It immediately follows from Theorem \ref{Irrrep5-irr} (5).	
\end{proof}

We should note that for all $(P, Q)\in GL_n(K)\times GL_n(K)$ and $(c, d)\in SCP(R_n)\times SCP(R_n)$ with $|c|\neq |d|$, we always have $\deg(\phi_Q(d)) = |d|\neq |c| = \deg(\phi_P(\beta))$ for all $\beta\in \Pi_c$, and so $\phi_Q(d) \neq \phi_P(\beta)$ for all $\beta\in \Pi_c$. Consequently, we obtain that $[(c, P)]\neq [(d, Q)]$. This shows that there are infinitely many isomorphic classes of simple modules described in Corollary \ref{Irrrep6-irr}.  

Using Theorems \ref{Irrrep1}, \ref{Irrrep2-irr} and \ref{Irrrep5-irr}, we obtain a list of some classes of pairwise non-isomorphic simple modules for the Leavitt path algebra $L_K(R_n)$.
 
\begin{thm}\label{Irrrep7-irr}
Let $K$ be a field, $n\ge 2$ a positive integer and $R_n$ be the rose graph with $n$ petals. Then, the following simple left $L_K(R_n)$-modules
\begin{itemize} 
\item[(1)]  $V_{[\alpha]}$, where $\alpha\in (R_n)^{\infty}_{irr}$;

\item[(2)]  $S^f_{\Pi_c}$, where $c\in SCP(R_n)$ and $f\in \textnormal{Irr}(K[x])$;

\item[(3)]  $S^{f, p}_d$, where $d\in C_s(R_n)$, $f\in \textnormal{Irr}(K[x])$ with $\deg(f)\ge 2$, $[0]\neq [p]\in A_{R_n}(e_1, e_2)/\equiv_{f, d}$;

\item[(4)]  $V^P_{[\alpha]}$, where $[\alpha]_{\equiv} \in (R_n)^{\infty}_{irr-eeri}/\equiv \text{ and } I_n \neq P \in GL_n(K)$;

\item[(5)]  $V^P_{[c^{\infty}]}$, where  $[(c, P)]\in (SCP(R_n)\times GL_n(K))/\equiv$ and $P\neq I_n$
\end{itemize} 
are pairwise non-isomorphic.
\end{thm}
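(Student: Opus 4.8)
The plan is to separate the argument into the \emph{intra-family} comparisons, which are already available, and the \emph{inter-family} comparisons, which carry the new content. All five families consist of simple modules, by Theorem~\ref{Irrrep2-irr}(1), Theorem~\ref{Irrrep5-irr}(1) and the cited results of Chen and \'Anh--Nam. For non-isomorphism \emph{within} a family I would invoke Theorem~\ref{Irrrep1} for families (1)--(3), and Corollaries~\ref{Irrrep3-irr} and \ref{Irrrep6-irr} (refined by the classification criteria of Theorems~\ref{Irrrep2-irr}(5) and \ref{Irrrep5-irr}(5)) for families (4) and (5). The inter-family comparisons I would organise around two genuine isomorphism invariants: whether the module is finitely presented, and its endomorphism ring.

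I would first use finite presentation as a coarse invariant. The modules $V_{[\alpha]}$ with $\alpha$ irrational (family (1)) and $V^P_{[\alpha]}$ with $\alpha\in (R_n)^{\infty}_{irr-eeri}$ (family (4)) are \emph{not} finitely presented, the latter by Theorem~\ref{Irrrep2-irr}(3) and the former by the same (untwisted) annihilator computation of \'Anh--Nam; whereas $S^f_{\Pi_c}$, $S^{f,p}_d$ and $V^P_{[c^{\infty}]}$ (families (2), (3), (5)) are finitely presented, the last by Theorem~\ref{Irrrep5-irr}(3) and the first two because they are cyclic with the single defining relation $f(c)$ (finite presentation being stable under twisting by a graded automorphism). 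This already separates $\{(1),(4)\}$ from $\{(2),(3),(5)\}$.

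To finish inside the finitely presented block I would compute endomorphism rings: $\mathrm{End}_{L_K(R_n)}(V^P_{[c^{\infty}]})\cong K$ by Theorem~\ref{Irrrep5-irr}(2), whereas $\mathrm{End}_{L_K(R_n)}(S^f_{\Pi_c})\cong K[x]/(f)$, a lemma I would record and which, being twist-stable, also computes family (3). Since family (3) is restricted to $\deg f\ge 2$, its endomorphism ring is a proper field extension of $K$, separating (3) from (5) and from the members of (2) with $\deg f\ge 2$; families (2) and (3) are separated from one another by Theorem~\ref{Irrrep1}, of which they are subfamilies. For the non-finitely-presented block, Theorem~\ref{Irrrep2-irr}(4) shows that $V^P_{[\alpha]}\cong V_{[\beta]}$ forces $P$ to be a monomial matrix $\sigma\cdot D$, which separates (4) from (1) as soon as $P$ is taken non-monomial.

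The step I expect to be the real obstacle is the boundary case where these invariants degenerate. A scalar matrix $P=\lambda I_n$ gives $\phi_P(c)=\lambda^{|c|}c$, so by Theorem~\ref{Irrrep5-irr}(3) the module $V^{\lambda I_n}_{[c^{\infty}]}\cong L_K(R_n)/L_K(R_n)(v-\lambda^{|c|}c)$ is a degree-one member $S^{1-\lambda^{|c|}x}_{\Pi_c}$ of family (2); symmetrically a monomial $P\ne I_n$ makes $V^P_{[\alpha]}$ a Chen module in family (1). Both coincidences have endomorphism ring $K$ and the same finite-presentation status, so the coarse invariants cannot detect them, and they are precisely what the hypotheses $P\ne I_n$, $\deg f\ge 2$ and the passage to the equivalence classes $[\alpha]_{\equiv}$ and $[(c,P)]$ in the statement are designed to exclude. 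Handling them rigorously is where I would concentrate: I would argue directly from the cyclic presentations $V^P_{[c^{\infty}]}\cong L_K(R_n)/L_K(R_n)(v-\phi_P(c))$ and $V^P_{[\alpha]}\cong L_K(R_n)/\bigoplus_{m}L_K(R_n)(\phi_P(\epsilon_m)-\phi_P(\epsilon_{m+1}))$, using that $\phi_P(c)$ is a $K$-linear combination of several distinct length-$|c|$ paths whenever $P$ is non-monomial (and non-scalar), to show that such a module admits no cyclic vector whose annihilator is generated by an element of the form $v-(\text{scalar})\cdot(\text{single cycle})$, and hence cannot be isomorphic to any degree-one $S^f_{\Pi_c}$ nor to any untwisted Chen module.
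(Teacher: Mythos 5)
Your main line is the same as the paper's proof, and for the bulk of the comparisons it is correct: intra-family non-isomorphism via Theorem~\ref{Irrrep1} and Corollaries~\ref{Irrrep3-irr}, \ref{Irrrep6-irr}; finite presentation to separate $\{(1),(4)\}$ from $\{(2),(3),(5)\}$ (the paper uses Theorem~\ref{Irrrep2-irr}(3) against Theorem~\ref{Irrrep5-irr}(3) and the finite presentability of $S^f_{\Pi_c}$ and $S^{f,p}_d$ from \cite{anhnam} and \cite{kn:ataanirolpa}); and endomorphism rings ($K$ versus $K[x]/(f)$ with $\deg f\ge 2$) to separate (5) from (3). One genuine difference: the paper stops there. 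Its proof never addresses the comparisons (1) vs.\ (4) and (2) vs.\ (5) at all, whereas you correctly identify these as the critical boundary cases --- both sides are non-finitely-presented with endomorphism ring $K$ in the first comparison, and finitely presented cyclic with endomorphism ring $K$ in the second (for $\deg f=1$). So your diagnosis of where the real content lies goes beyond what the paper actually proves.

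The gap is in your resolution of those boundary cases. You assert that the hypotheses ``$P\neq I_n$, $\deg f\ge 2$ and the passage to the equivalence classes'' exclude the degenerate coincidences, and you then plan a direct argument only for $P$ non-monomial and non-scalar. But the hypotheses do \emph{not} exclude them: family (4) permits any monomial $P=\sigma\cdot D\neq I_n$, and then Theorem~\ref{Irrrep2-irr}(4) yields an actual isomorphism $V^P_{[\alpha]}\cong V_{[\sigma^{-1}\cdot\alpha]}$ with the right-hand side in family (1) --- already a diagonal $D\neq I_n$ gives $V^D_{[\alpha]}\cong V_{[\alpha]}$, since the proof of Theorem~\ref{Irrrep2-irr}(4) shows the scalars cancel in $\phi_P(\epsilon_m)$. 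Likewise family (5) permits $P=\lambda I_n\neq I_n$, and then $V^P_{[c^\infty]}\cong L_K(R_n)/L_K(R_n)(v-\lambda^{|c|}c)=S^{1-\lambda^{|c|}x}_{\Pi_c}$, exactly the degree-one member of family (2) you computed. The equivalence relations do not rescue this: $\equiv$ on $(R_n)^{\infty}_{irr-eeri}$ and on $SCP(R_n)\times GL_n(K)$ identify parameters \emph{within} families (4) and (5) respectively, but do not quotient against members of families (1) and (2). So these coincidences are not cases your direct argument has yet to handle; they are genuine isomorphisms contradicting the literal statement, and no argument can close them without strengthening the hypotheses (e.g.\ $P$ non-monomial in (4); in (5), excluding those $(c,P)$ for which $\phi_P(\beta)$ is a scalar multiple of a closed path for some $\beta\in \Pi_c$). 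Finally, your intended key lemma --- that $\phi_P(c)$ is a combination of several distinct length-$|c|$ paths whenever $P$ is non-monomial and non-scalar --- is false as stated: a simple closed path $c\in SCP(R_n)$ need not traverse all $n$ petals, so it suffices that the columns of $P$ indexed by the edges occurring in $c$ each have a single nonzero entry, while $P$ is arbitrary (in particular non-monomial) elsewhere; e.g.\ $c=e_1$ and any $P$ with first column $(a,0,\dots,0)^{T}$ gives $V^P_{[e_1^\infty]}\cong S^{1-ax}_{\Pi_{e_1}}$. The correct column-wise condition is what Theorem~\ref{Irrrep5-irr}(4)--(5) encodes, and any repaired statement must be phrased in those terms rather than via monomiality of $P$ alone.
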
 
\begin{proof}
By Theorem \ref{Irrrep1}, all the simple  modules $V_{[\alpha]}$, $S^f_{\Pi_c}$ and $S^{f, p}_d$ are  pairwise non-isomorphic. By Corollary \ref{Irrrep3-irr}, all $V^P_{[\alpha]}$ $([\alpha]_{\equiv} \in (R_n)^{\infty}_{irr-eeri}/\equiv \text{ and } P \in GL_n(K))$ are pairwise non-isomorphic. By Corollary \ref{Irrrep6-irr}, all $V^P_{[c^{\infty}]}$ $([(c, P)]\in (SCP(R_n)\times GL_n(K))/\equiv)$  are pairwise non-isomorphic. By Theorem \ref{Irrrep2-irr} (3), $V^P_{[\alpha]}$ is not finitely presented for all $\alpha\in (R_n)^{\infty}_{irr-eeri}$ and $P\in GL_n(K)$. While by Theorem \ref{Irrrep5-irr} (3),   $V^P_{[c^{\infty}]}$ is finitely presented for all $c\in SCP(R_n)$ and $P\in GL_n(K)$. By \cite[Theorem 3.6 (5)]{kn:ataanirolpa}, all $S^{f, p}_d$  are finitely presented. By \cite[Theorem 4.3]{anhnam} (see also \cite[Theorem 3.2]{kn:ataanirolpa}), all $S^f_{\Pi_c}$  are finitely presented. Therefore, each $V^P_{[\alpha]}$ is neither isomorphic to any $S^f_{\Pi_c}$ nor any $V^P_{[c^{\infty}]}$. By Theorem \ref{Irrrep5-irr} (2), $End_{L_K(R_n)}(V^P_{[c^{\infty}]})\cong K$ for all $c\in SCP(R_n)$ and $P\in GL_n(K)$. While by \cite[Theorem 3.6 (4)]{kn:ataanirolpa},  $End_{L_K(R_n)}(S^{f, p}_d)\cong K[x]/K[x]f(x)$ for all $d\in C_s(R_n)$, $f\in \text{Irr}(K[x])$ and $p\in A_{R_n}(e_1, e_2)$. Therefore, each $V^P_{[c^{\infty}]}$ is not isomorphic to any $S^{f, p}_d$ with $\deg(f)\ge 2$, thus finishing the proof.
\end{proof}

We end this article by presenting the following example which illustrates Theorem \ref{Irrrep7-irr}. 

\begin{exas}
Let $\mathbb{R}$ be the field of real numbers and $R_2$ be the rose with $2$ petals.  We then have $(R_2)^{\infty}_{irr-eeri} = (R_2)^{\infty}_{irr}$, and
$C_s(R_2) = \{e^m_1 e_2\mid m\in \mathbb{Z},\, m\ge 0\}$ and $A_{R_2}(e_1, e_2)$ is the $\mathbb{R}$-subalgebra of $L_{\mathbb{R}}(R_2)$ generated by $v, \, e_1, \, e^*_2$, that means, $$A_{R_2}(e_1, e_2) = \{\sum^n_{i =1}r_i e^{m_i}_1 (e^*_2)^{l_i}\mid n\ge 1,\, r_i \in \mathbb{R},\, m_i, l_i\ge 0 \},$$ where $e^0_1 = v = (e^*_2)^0$, and $\mathbb{R}[e_1]\subseteq A_{R_2}(e_1, e_2)$. By 
Corollary \ref{Irrrep7-irr}, the following simple left $L_{\mathbb{R}}(R_2)$-modules
\begin{itemize} 
\item[(1)]  $V_{[\alpha]}$, where $\alpha\in (R_2)^{\infty}_{irr}$;
	
\item[(2)]  $S^f_{\Pi_c}$, where $c\in SCP(R_2)$ and $f\in \text{Irr}(\mathbb{R}[x])$;
	
\item[(3)]  $S^{f, p}_{e^m_1e_2}$, where $m\ge 0$, $f = 1 - bx -ax^2\in \mathbb{R}[x]$ with $a\neq 0$ and $b^2 +4a<0$, and $0\neq p\in \mathbb{R}[e_1]$;
	
\item[(4)]  $V^P_{[\alpha]}$, where $[\alpha]_{\equiv} \in (R_2)^{\infty}_{irr}/\equiv \text{ and } I_2 \neq P \in GL_2(\mathbb{R})$;
	
\item[(5)]  $V^P_{[c^{\infty}]}$, where  $[(c, P)]\in (SCP(R_2)\times GL_2(\mathbb{R}))/\equiv$ and $P\neq I_2$
\end{itemize} 
are pairwise non-isomorphic.

\end{exas}

\vskip 0.5 cm \vskip 0.5cm {
	
\end{document}